\def\definetac{\newif\iftac}    
\else\usepackage{amsthm}\fi
\definecolor{darkgreen}{rgb}{0,0.45,0} 
\let\ea\expandafter
\def\mdef#1#2{\ea\ea\ea\gdef\ea\ea\noexpand#1\ea{\ea\ensuremath\ea{#2}\xspace}}
\def\alwaysmath#1{\ea\ea\ea\global\ea\ea\ea\let\ea\ea\csname your@#1\endcsname\csname #1\endcsname
  \ea\def\csname #1\endcsname{\ensuremath{\csname your@#1\endcsname}\xspace}}
\DeclareRobustCommand\widecheck[1]{{\mathpalette\@widecheck{#1}}}
\def\@widecheck#1#2{%
    \setbox\z@\hbox{\m@th$#1#2$}%
    \setbox\tw@\hbox{\m@th$#1%
       \widehat{%
          \vrule\@width\z@\@height\ht\z@
          \vrule\@height\z@\@width\wd\z@}$}%
    \dp\tw@-\ht\z@
    \@tempdima\ht\z@ \advance\@tempdima2\ht\tw@ \divide\@tempdima\thr@@
    \setbox\tw@\hbox{%
       \raise\@tempdima\hbox{\scalebox{1}[-1]{\lower\@tempdima\box
\tw@}}}%
    {\ooalign{\box\tw@ \cr \box\z@}}}
\def\foreachletter#1#2#3{\foreachcount=#1
  \ea\loop\ea\ea\ea#3\@alph\foreachcount
  \advance\foreachcount by 1
  \ifnum\foreachcount<#2\repeat}
\def\foreachLetter#1#2#3{\foreachcount=#1
  \ea\loop\ea\ea\ea#3\@Alph\foreachcount
  \advance\foreachcount by 1
  \ifnum\foreachcount<#2\repeat}
\def\definescr#1{\ea\gdef\csname s#1\endcsname{\ensuremath{\mathscr{#1}}\xspace}}
\def\definecal#1{\ea\gdef\csname c#1\endcsname{\ensuremath{\mathcal{#1}}\xspace}}
\def\definebold#1{\ea\gdef\csname b#1\endcsname{\ensuremath{\mathbf{#1}}\xspace}}
\def\definebb#1{\ea\gdef\csname l#1\endcsname{\ensuremath{\mathbb{#1}}\xspace}}
\def\definefrak#1{\ea\gdef\csname f#1\endcsname{\ensuremath{\mathfrak{#1}}\xspace}}
\def\definebar#1{\ea\gdef\csname #1bar\endcsname{\ensuremath{\overline{#1}}\xspace}}
\def\definetil#1{\ea\gdef\csname #1til\endcsname{\ensuremath{\widetilde{#1}}\xspace}}
\def\definehat#1{\ea\gdef\csname #1hat\endcsname{\ensuremath{\widehat{#1}}\xspace}}
\def\definechk#1{\ea\gdef\csname #1chk\endcsname{\ensuremath{\widecheck{#1}}\xspace}}
\def\defineul#1{\ea\gdef\csname u#1\endcsname{\ensuremath{\underline{#1}}\xspace}}
\def\autofmt@n#1\autofmt@end{\mathrm{#1}}
\def\autofmt@b#1\autofmt@end{\mathbf{#1}}
\def\autofmt@l#1#2\autofmt@end{\mathbb{#1}\mathsf{#2}}
\def\autofmt@c#1#2\autofmt@end{\mathcal{#1}\mathit{#2}}
\def\autofmt@s#1#2\autofmt@end{\mathscr{#1}\mathit{#2}}
\def\autofmt@f#1\autofmt@end{\mathfrak{#1}}
\def\autofmt@u#1\autofmt@end{\underline{\smash{\mathsf{#1}}}}
\def\autofmt@U#1\autofmt@end{\underline{\underline{\smash{\mathsf{#1}}}}}
\def\autofmt@h#1\autofmt@end{\widehat{#1}}
\def\autofmt@r#1\autofmt@end{\overline{#1}}
\def\autofmt@t#1\autofmt@end{\widetilde{#1}}
\def\autofmt@k#1\autofmt@end{\check{#1}}
\def\auto@drop#1{}
\def\autodef#1{\ea\ea\ea\@autodef\ea\ea\ea#1\ea\auto@drop\string#1\autodef@end}
\def\@autodef#1#2#3\autodef@end{%
  \ea\def\ea#1\ea{\ea\ensuremath\ea{\csname autofmt@#2\endcsname#3\autofmt@end}\xspace}}
\def\autodefs@end{blarg!}
\def\autodefs#1{\@autodefs#1\autodefs@end}
\def\@autodefs#1{\ifx#1\autodefs@end%
  \def\autodefs@next{}%
  \else%
  \def\autodefs@next{\autodef#1\@autodefs}%
  \fi\autodefs@next}
\DeclareSymbolFont{bbold}{U}{bbold}{m}{n}
\DeclareSymbolFontAlphabet{\mathbbb}{bbold}
\newcommand{\bbone}{\ensuremath{\mathbbb{1}}\xspace}
\mdef\delbar{\overline{\partial}}
\let\sm\wedge
\mdef\hf{\textstyle\frac12 }
\mdef\thrd{\textstyle\frac13 }
\mdef\qtr{\textstyle\frac14 }
\newcommand{\op}{^{\mathrm{op}}}
\let\iso\cong
\mdef\Id{\mathrm{Id}}
\mdef\id{\mathrm{id}}
\def\frc#1/#2.{\frac{#1}{#2}}   
\mdef\ten{\mathrel{\otimes}}
\mdef\sqten{\mathrel{\boxtimes}}
\DeclareRobustCommand\widecheck[1]{{\mathpalette\@widecheck{#1}}}
\def\@widecheck#1#2{%
    \setbox\z@\hbox{\m@th$#1#2$}%
    \setbox\tw@\hbox{\m@th$#1%
       \widehat{%
          \vrule\@width\z@\@height\ht\z@
          \vrule\@height\z@\@width\wd\z@}$}%
    \dp\tw@-\ht\z@
    \@tempdima\ht\z@ \advance\@tempdima2\ht\tw@ \divide\@tempdima\thr@@
    \setbox\tw@\hbox{%
       \raise\@tempdima\hbox{\scalebox{1}[-1]{\lower\@tempdima\box
\tw@}}}%
    {\ooalign{\box\tw@ \cr \box\z@}}}
\DeclareMathOperator\lan{Lan}
\DeclareMathOperator\colim{colim}
\newcommand{\too}[1][]{\ensuremath{\overset{#1}{\longrightarrow}}}
\let\toot\rightleftarrows
\mdef\we{\overset{\sim}{\longrightarrow}}
\mdef\leftwe{\overset{\sim}{\longleftarrow}}
\let\maps\colon
\newcommand{\fib}{\mathsf{fib}}
\let\xto\xrightarrow
\def\rightarrowtailfill@{\arrowfill@{\Yright\joinrel\relbar}\relbar\rightarrow}
\newcommand\xrightarrowtail[2][]{\ext@arrow 0055{\rightarrowtailfill@}{#1}{#2}}
\def\twoheadrightarrowfill@{\arrowfill@{\relbar\joinrel\relbar}\relbar\twoheadrightarrow}
\newcommand\xtwoheadrightarrow[2][]{\ext@arrow 0055{\twoheadrightarrowfill@}{#1}{#2}}
\def\slashedarrowfill@#1#2#3#4#5{%
  $\m@th\thickmuskip0mu\medmuskip\thickmuskip\thinmuskip\thickmuskip
   \relax#5#1\mkern-7mu%
   \cleaders\hbox{$#5\mkern-2mu#2\mkern-2mu$}\hfill
   \mathclap{#3}\mathclap{#2}%
   \cleaders\hbox{$#5\mkern-2mu#2\mkern-2mu$}\hfill
   \mkern-7mu#4$%
}
\def\rightslashedarrowfill@{%
  \slashedarrowfill@\relbar\relbar\mapstochar\rightarrow}
\newcommand\xslashedrightarrow[2][]{%
  \ext@arrow 0055{\rightslashedarrowfill@}{#1}{#2}}
\mdef\hto{\xslashedrightarrow{}}
\mdef\htoo{\xslashedrightarrow{\quad}}
\def\shvar#1#2{{\ensuremath{%
  \hspace{1mm}\makebox[-1mm]{$#1\langle$}\makebox[0mm]{$#1\langle$}\hspace{1mm}%
  {#2}%
  \makebox[1mm]{$#1\rangle$}\makebox[0mm]{$#1\rangle$}%
}}}
\def\sh{\shvar{}}
\long\def\my@drawfill#1#2;{%
\@skipfalse
\fill[#1,draw=none] #2;
\@skiptrue
\draw[#1,fill=none] #2;
}
\newif\if@skip
\newcommand{\skipit}[1]{\if@skip\else#1\fi}
\newcommand{\drawfill}[1][]{\my@drawfill{#1}}
\newif\ifhyperref
  \let\your@state\state
  \def\state#1{\my@state#1}
  \def\my@state#1.{\gdef\currthmtype{#1}\your@state{#1.}}
  \let\your@staterm\staterm
  \def\staterm#1{\my@staterm#1}
  \def\my@staterm#1.{\gdef\currthmtype{#1}\your@staterm{#1.}}
  \let\defthm\newtheorem
  \def\switchtotheoremrm{\let\defthm\newtheoremrm}
  \def\currthmtype{}
    \def\autoref#1{\ref*{label@name@#1}~\ref{#1}}
    \def\autoref#1{\ref{label@name@#1}~\ref{#1}}
    \let\old@label\label%
    \def\label#1{%
      {\let\your@currentlabel\@currentlabel%
        \edef\@currentlabel{\currthmtype}%
        \old@label{label@name@#1}}%
      \old@label{#1}}
    \def\defthm#1#2{%
      \newtheorem{#1}{#2}[section]%
      \expandafter\def\csname #1autorefname\endcsname{#2}%
      \expandafter\let\csname c@#1\endcsname\c@thm}
    \def\defthm#1#2{\newtheorem{#1}[thm]{#2}}
\let\SK@label\label\fi
    \let\old@label\label
    \let\your@thm\@thm
    \def\@thm#1#2#3{\gdef\currthmtype{#3}\your@thm{#1}{#2}{#3}}
    \def\currthmtype{}
    \def\label#1{{\let\your@currentlabel\@currentlabel\def\@currentlabel%
        {\currthmtype~\your@currentlabel}%
        \SK@label{#1@}}\old@label{#1}}
    \def\autoref#1{\ref{#1@}}
\newtheorem{thm}{Theorem}[section]
\else\theoremstyle{definition}\fi
\else\theoremstyle{remark}\fi
\def\thmqedhere{\expandafter\csname\csname @currenvir\endcsname @qed\endcsname}
  \let\c@equation\c@subsection
  \let\c@equation\c@thm
\numberwithin{equation}{section}
\mdef\ep{\varepsilon}
\mdef\ph{\varphi}
\newcommand{\tr}{\ensuremath{\operatorname{tr}}}
\newcommand{\dual}[1]{D{#1}}
\newcommand{\rdual}[1]{D_r{#1}}
\let\D\sD
\theoremstyle{plain}
\newtheorem*{rep@theorem}{\rep@title}
\newcommand{\newreptheorem}[2]{%
\newenvironment{rep#1}[1]{%
 \def\rep@title{#2 \ref{##1}}%
 \begin{rep@theorem}}%
 {\end{rep@theorem}}}
\let\prof\bProf
\let\dV\sV
\let\dW\sW
\let\dT\sT
\let\dprof\cProf
\def\ho{\mathscr{H}\!\mathit{o}\xspace}
\newcommand{\ptop}{\sT}
\newcommand{\tc}{\bbone}          
\newcommand{\twc}{\tc\sqcup\tc}           
\newcommand{\pt}{\star} 
\renewcommand{\fib}[2]{{#1}^{-1}(#2)}
\def\calDi#1#2{\ensuremath{\mathscr{#1}\!/\!_{\mathbf{#2}}}}
\newcommand{\pb}[1]{{#1}^{\ast}}
\newcommand{\pf}[1]{{#1}_{!}}
\newcommand{\copf}[1]{{#1}_{\ast}}
\newcommand{\xrhd}{\mathrel{\overline{\rhd}}}
\newcommand{\Ex}{\cEx} 
\newcommand{\ev}[1]{\operatorname{ev}_{#1}}
\renewcommand{\lan}[1]{\operatorname{Lan}_{#1}}
\def\fn{g}
\newcommand{\exsm}{\mathbin{\overline{\sm}}}
\newcommand{\bbZ}{\mathbb{Z}}
\title{The linearity of fixed point invariants}
\author{Kate Ponto and Michael Shulman}
\date{\today}
\thanks{The first author  was partially 
supported by NSF grant DMS-1207670. The second author was partially supported by an NSF postdoctoral fellowship and 
NSF grant DMS-1128155, and appreciates the hospitality of the University of Kentucky.
Any opinions, findings, and conclusions or recommendations expressed in this material are those of the authors and 
do not necessarily reflect the views of the National Science Foundation.}
\address{University of Kentucky\\719 Patterson Office Tower\\Lexington, KY 40508}
\email{kate.ponto@uky.edu}
\address{University of San Diego\\5998 Alcala Park\\San Diego, CA 92110}
\email{shulman@sandiego.edu}
\keywords{trace, additivity, derivators, monoidal model category}
\subjclass[2010]{18D05, 18D10, 55M20}
\begin{document}
\maketitle

\begin{abstract}
  We prove two general decomposition theorems for fixed-point invar\-iants: one for the Lefschetz number and one for the Reidemeister trace.
  These theorems imply the familiar additivity results for these invariants.
  Moreover, the proofs of these theorems are essentially formal, taking place in the abstract context of bicategorical traces.
  This makes it straightforward to generalize the theory to analogous invariants in other contexts, such as equivariant and fiberwise homotopy theory.
\end{abstract}

\tableofcontents

\section{Introduction}
\label{sec:introduction}

In topological fixed-point theory, one of the most basic invariants is the \emph{Lefschetz number} $L(f)$ of an endomorphism $f$ of a finite CW complex $X$, which vanishes whenever $f$ is homotopic to a map with no fixed points.
If $f$ is the identity map, then $L(f)$ is the \emph{Euler characteristic} $\chi(X)$ of the complex $X$.

Two of the most important tools for calculating Lefschetz numbers (and hence also Euler characteristics) are \emph{additivity} and \emph{multiplicativity}.
The classical additivity theorem states that if $X\subset Y$ is a subcomplex and $f\colon Y\rightarrow Y$ takes $X$ into itself, then
\begin{equation}
  L(f)=L(f|_X)+L(f/X)\label{eq:intro-add}
\end{equation}
where $f|_X$ denotes the restriction of $f$ to $X$ and $f/X$ denotes the induced map on the quotient $Y/X$.
The classical multiplicativity theorem is that if $p\colon E\xto{}B$ is a fibration with fiber $F$ and simply connected base $B$, and $f\colon E\xto{}E$ and $f_B\colon B\xto{}B$ are endomorphisms satisfying $p\circ f=f_B\circ p$, then
\begin{equation}
  L(f)=L(f_B)\cdot L(f|_F).\label{eq:intro-mult}
\end{equation}


This paper is part of a project to prove similar theorems for generalizations of the Lefschetz number, such as the Reidemeister trace (which refines the Lefschetz number with information appropriate to non-simply-connected spaces), as well as versions of the Lefschetz number and Reidemeister trace for parametrized or equivariant endomorphisms \cite{kate:traces, kate:equiv}.
These refined invariants are relatively incompatible with the classical approaches to additivity and multiplicativity, so we instead approach these results by embracing a greater level of abstraction. 

In \cite{dp:duality}, Dold and Puppe observed that the Lefschetz number is an example of the \emph{trace} in a symmetric monoidal category.  This generalizes the familiar trace for linear transformations.
(See~\cite{PS1} for an overview of traces in symmetric monoidal categories.)
They also noted that the trace in an equivariant or parameterized stable homotopy category is the equivariant or parameterized Lefschetz number, and that the Lefschetz fixed-point theorem is a consequence of the fact that symmetric monoidal functors preserve traces.

In~\cite{kate:traces,PS2}, the first author observed that the Reidemeister trace can be described in a similar way 
by generalizing from symmetric monoidal traces to \emph{bicategorical traces}.
Here a bicategory is regarded as a ``many-object'' version of a monoidal category, where the objects are (in the simplest version) groups or rings that encode the actions of fundamental groups. 
This generalization yields definitions of equivariant and parameterized Reidemeister traces, and the Reidemeister-trace version of the Lefschetz fixed-point theorem follows from the preservation of bicategorical traces by suitable functors of bicategories.

From this perspective,~\eqref{eq:intro-add} and~\eqref{eq:intro-mult} have very suggestive parallels with the trace of a linear transformation: 
the additivity of the Lefschetz number parallels the additivity of the trace of linear transformations on short exact sequences, and the multiplicativity of the Lefschetz number parallels the compatibility between the 
trace and tensor product of linear transformations.
This suggests that additivity and multiplicativity results for fixed-point invariants can usefully be situated in the larger project of establishing such results for the symmetric monoidal 
and bicategorical traces.

In fact, two special cases of additivity and multiplicativity are immediate consequences of the identification of fixed-point invariants as traces.
Traces in a closed symmetric monoidal category are automatically compatible with coproducts and with the monoidal product itself, yielding respectively the ``trivial'' case of~\eqref{eq:intro-add} when $X$ is the inclusion of a disjoint summand and the ``trivial'' case of~\eqref{eq:intro-mult} when $E=B\times F$ and $p$ is the product projection.

In~\cite{add} May extended these compatibility arguments to the general case of~\eqref{eq:intro-add} for Lefschetz numbers, using triangulated categories with compatible monoidal structures, and in~\cite{gps:additivity} we reformulated his proof using stable derivators.
The proof is much more complicated than that of the trivial case, but the basic idea is the same: the quotient $Y/X$ is a kind of colimit, and the tensor product preserves colimits in each variable, yielding a compatibility relation for traces from which~\eqref{eq:intro-add} can be extracted.

By contrast, the nontrivial case of multiplicativity requires a change of perspective.
The hypothesis of simple connectivity is unnatural from an abstract point of view.
But in its absence, the analogue of~\eqref{eq:intro-mult} must involve not only the Lefschetz number of $f_B$ but its Reidemeister trace, and this pushes us into the world of bicategorical trace.
This led us in~\cite{PS3} to study traces in bicategories that can contain arbitrary fibrations $p:E\to B$, such as those of parametrized spaces and spectra from~\cite{maysig:pht}.

Then in~\cite{PS4} we gave an abstract proof of multiplicativity: not only~\eqref{eq:intro-mult} but also a more refined version that eliminates the hypothesis of simple connectivity, as well as a corresponding result for the Reidemeister trace, all of which can easily be generalized to other contexts including the parametrized and equivariant worlds.
The idea is to view the multiplication in~\eqref{eq:intro-mult} as the product of two $1\times 1$ matrices, and then generalize it to the product of a $1\times n$ matrix and an $n\times 1$ matrix.
The dimension $n$ encodes $\pi_1(B)$, and the matrix product is a ``decategorification'' of the composition in a bicategory of parametrized spaces or spectra.

In this paper and its companion~\cite{PS5}, we apply a similar perspective to prove the additivity formula~\eqref{eq:intro-add} and significant generalizations thereof.
The most familiar of these generalizations is the following additivity formula for the Reidemeister trace.

\begin{thm}\label{thm:addreidemeisteri}If $X$ and $Y$ are closed smooth manifolds with 
$X\subset Y$, $f\colon Y\to Y$ is a continuous map such that $f(X)\subset X$, $\Lambda^fY$ is the twisted loop space,  and $i\colon \Lambda^{f|_X} X\xto{}\Lambda^f Y$  is the inclusion, then
\[R(f)-i(R(f|_X))=R_{Y|X}(f).\]  
The invariant $R_{Y|X}(f)$ is the \emph{relative
Reidemeister trace} of $f$ \cite{kate:relative}.  If $X\subset Y$ is a cofibration, $R_{Y|X}(f)$ is a refinement of the Reidemeister trace of $f/X\colon Y/X\to Y/X$ that takes values in $\pi_0^s(\Lambda^f(Y))$.
\end{thm}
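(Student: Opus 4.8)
The plan is to realize $R(f)$, $i(R(f|_A))$, and $R_{X|A}(f)$ as bicategorical traces in a single symmetric monoidal bicategory equipped with a shadow, and then to deduce the identity from an abstract additivity (``linearity'') principle for such traces along distinguished triangles. Concretely, one works in (a suitable form of) the bicategory $\Ex$ of parametrized spectra---equivalently, with spectral bimodules over the ring spectra $\Sigma^\infty_+\Omega(-)$---in which a closed smooth manifold $X$ determines, by Atiyah duality, a dualizable $1$-cell $M_X$; an endomorphism $f\colon X\to X$ determines a $2$-cell on $M_X$; and the bicategorical trace of this $2$-cell, computed through the shadow functor, is the Reidemeister trace $R(f)\in\pi_0^s(\Lambda^f X)$. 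Likewise $f|_A$ determines $R(f|_A)\in\pi_0^s(\Lambda^{f|_A}A)$, and $i$ is the map on traces induced by the inclusion $\Lambda^{f|_A}A\to\Lambda^f X$ of twisted loop spaces.

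First I would produce the relevant distinguished triangle of dualizable endomorphisms. Since $A\subset X$ is a closed smooth submanifold, its inclusion is a cofibration (tubular neighborhood theorem), so $A\into X\to X/A$ is a cofiber sequence and, after stabilizing, $\Sigma^\infty_+A\to\Sigma^\infty_+X\to\Sigma^\infty(X/A)$ is a distinguished triangle, compatible with $f$, $f|_A$, and the self-map $f/A$ of $X/A$. Lifting this into $\Ex$, the $1$-cells $M_A$ and $M_X$ sit in a distinguished triangle $M_A\to M_X\to N$ in the stable $1$-cell category, where $N$ is the cofiber; it is therefore itself dualizable---dualizable objects are closed under cofibers in a stable setting---and carries the induced endomorphism, and since $N$ lives ``over $X$'' its shadow is a spectrum over $\Lambda^f X$. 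Now I would invoke the abstract linearity theorem for bicategorical traces: for a distinguished triangle of dualizable endomorphisms, the trace of the middle term equals the sum of the pushforwards of the traces of the two outer terms---and this holds for arbitrary choices of duality data, so in particular it does not require the Atiyah duality data for $A$ and for $X$ to be compatible in any naive way. This yields
\[ R(f)=i\bigl(R(f|_A)\bigr)+\tr(N)\qquad\text{in }\pi_0^s(\Lambda^f X), \]
where $\tr(N)$ is the bicategorical trace of the induced endomorphism of $N$.

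It then remains to identify $\tr(N)$ with the relative Reidemeister trace $R_{X|A}(f)$ of \cite{kate:relative}, and this is the step I expect to be the main obstacle. The difficulty is that $R_{X|A}(f)$ was originally defined by an explicit topological construction for the pair $(X,A)$---through a relative Spanier--Whitehead/Atiyah duality and a relative fixed-point collapse map---rather than as an abstract trace, and since $X/A$ need not be a smooth manifold, $N$ is not the $1$-cell of a closed manifold. The argument is to unwind both descriptions and match them: one checks that the relative duality data defining $R_{X|A}(f)$ coincide with the cofiber duality data that the linearity theorem manufactures on $N$, and that the relative collapse map agrees with the corresponding evaluation $2$-cell, so that the two elements of $\pi_0^s(\Lambda^f X)$ are equal. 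Granting this, the last assertion of the theorem follows: when $A\subset X$ is a cofibration, $N$ is built from $\Sigma^\infty(X/A)$, and $\tr(N)$ maps to the Reidemeister trace $R(f/A)$ under the collapse $\Lambda^f X\to\Lambda^{f/A}(X/A)$, so $R_{X|A}(f)$ is a refinement of $R(f/A)$ and the displayed identity recovers Ferrario's formula \cite{ferrario} as a special case.
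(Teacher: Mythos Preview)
Your outline is essentially the paper's argument, but you treat as a black box the step the paper regards as its main technical contribution. The ``abstract linearity theorem for bicategorical traces along distinguished triangles'' that you invoke is precisely \autoref{thm:addbicat}, which the paper obtains by specializing its general linearity machinery: the cofiber weight $\Phi$ on the arrow category is shown to be absolute with coefficient vector $(-1,1)$, whence $\tr(\text{cofiber}) = \tr(f_b) - \tr(f_a)$. May's additivity in \cite{add} is stated for symmetric monoidal triangulated categories, not for bicategories with shadows and twisted endomorphisms; the paper's Part~\ref{part:derivators} and the companion \cite{PS5} exist precisely to make the bicategorical statement rigorous, via derivator bicategories. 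So your sketch is correct as an outline, but the content lies in the black box you assume.

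Two smaller points where the paper is more explicit than you are. First, for $M_A\to M_X$ to be a morphism in a single hom-category of $\Ex$ one must push $M_A$ forward along $i$: the paper takes $M_a = i_!(\widecheck{S_A})$, and then identifying its twisted trace with $i(R(f|_A))$ is a separate step, handled by the composition-of-traces theorem (\autoref{thm:compose-traces2}) together with the computation of the trace of the base-change square (\autoref{thm:square-trace}); together these give exactly your ``pushforward of the trace''. Second, the paper does not carry out the comparison you flag as the ``main obstacle'': it simply \emph{defines} $R_{X|A}(f)$ to be the bicategorical trace of the induced map on the fiberwise mapping cone and defers the identification with the classical relative Reidemeister trace to \cite{kate:relative}.
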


While this particular result is a small generalization of \cite[Theorem~3.2.1]{ferrario}, our method is dramatically different from Ferrario's.
Most significantly, our method can be applied directly to produce the analogous results for parametrized and equivariant Lefschetz numbers and Reidemeister traces.

The insight leading to our proof of additivity is to rewrite~\eqref{eq:intro-add} as
\begin{equation}
  L(f/X) = L(f) - L(f|_X). \label{eq:intro-add2}
\end{equation}
In this form, it can be viewed as a calculation of the invariant associated to a \emph{colimit}, namely the quotient $Y/X$, in terms of the corresponding invariants associated to the input data ($X$ and $Y$).
We now view the right-hand side of~\eqref{eq:intro-add2} as the product of the $1\times 2$ matrix $[L(f), L(f|_X)]$ and the $2\times 1$ matrix $\scriptsize\left[
\begin{array}{c}
  1 \\ -1
\end{array}\right]
$.
The dimension $2$ and the latter matrix are invariants determined by the type of colimit under consideration, and the matrix product is a decategorification of the composition in a suitable bicategory (namely, a bicategory of profunctors).
In~\cite{PS5} we develop this idea into a general method for calculating trace invariants associated to colimits of diagrams; while in this paper we specialize these results to diagrams of spaces, recovering~\eqref{eq:intro-add2} by specializing further to quotients.

There is a strong similarity to the results of~\cite{PS4} in both perspective and technique: in both cases we build on a generalization of the invariance of trace under change of basis that is proven by abstract diagram chases, and we describe the trace of a morphism in terms of traces of constituent pieces.
And indeed, from an $\infty$-categorical point of view, the total space of a fibration can be identified with a colimit whose ``diagram shape'' is the base space regarded as an $\infty$-groupoid.

However, because we take a more concrete topological approach, there are also major technical differences between the two theories.
The main one is that when dealing with \emph{diagrams} in homotopy theory (which we did not have to do in~\cite{PS4}), there is a constant need for \emph{fibrant and cofibrant replacement} in order to ensure that various maps that ought to exist really do exist.
To avoid this complication, we use a technical device called a \emph{derivator bicategory}, which encapsulates all the fibrant and cofibrant replacement in an abstract structure so that we never have to think about it again.

Although this abstract structure is very convenient for proving the abstract theorems, the topologically inclined reader will naturally want a more concrete description of what is going on.
This is the reason for the division between this paper and its companion~\cite{PS5}:
in~\cite{PS5} we develop the general theory abstractly, while in this paper we apply that theory more concretely to the cases of topological interest.
Formally speaking, the latter simply means verifying that the topological context gives rise to a derivator bicategory; we will do that in \autoref{part:derivators} of this paper.
However, since many readers may not be interested in that level of abstraction, in \autoref{part:topological} we give a more concrete translation of the theorems in purely topological language.
We encourage the reader with a purely topological interest to focus on \autoref{part:topological} and treat \autoref{part:derivators} and~\cite{PS5} as ``black boxes''.
Although we sketch some proofs in \autoref{part:topological} to give an idea of what is going on, formally speaking the real work is done in~\cite{PS5} and \autoref{part:derivators}.

We begin \autoref{part:topological} in \S\ref{sec:lefschetz-number} by explaining how to view the Lefschetz number as a trace, and the implications of this viewpoint (for instance, it immediately implies multiplicativity for trivial bundles).
In \S\ref{sec:lefschetz} we then generalize this to traces of \emph{diagrams}, such as the map $X\to Y$ of which we considered the quotient above.
The invariant associated to an endomorphism of a diagram includes the Lefschetz number of all the induced endomorphisms of the component spaces.
The central theorem then says that the Lefschetz number of the colimit (such as $Y/X$) can be calculated as a linear combination of these component Lefschetz numbers (such as~\eqref{eq:intro-add2}); thus we call it a \emph{linearity formula}.

In \S\ref{sec:fibration} we move on to the Reidemeister trace, explaining how to view it as a trace.
This involves introducing \emph{parametrized spaces} as in~\cite{maysig:pht}, but we try to remain as explicit as possible.
Finally, in \S\ref{sec:parametrize} we combine \S\ref{sec:lefschetz} and \S\ref{sec:fibration} to deduce a linearity formula for traces of diagrams of parametrized spaces, and in \S\ref{sec:reidemeister} we use this theory to deduce the additivity of Reidemeister trace.

\autoref{part:derivators} begins in \S\ref{sec:deriv-bicat} by recalling the definition of a derivator bicategory from \cite{PS5} and the statements of some of the results we used in \S\ref{sec:parametrize} in the greater generality of a derivator bicategory.
Then in \S\ref{sec:indexed} we describe a naturally occurring structure on parametrized spaces that we use to construct the derivator bicategory of parametrized spectra, generalizing \emph{indexed monoidal categories} (see \cite{PS3}) to \emph{indexed monoidal derivators}.
In \S\ref{sec:indclosed} we describe closed structures on indexed monoidal derivators, and hence on the derivator bicategories they give rise to; these are used in \cite{PS5} to make defining and constructing dual pairs easier.
Finally, in \S\ref{sec:indexed-mmc} we explain how to construct an indexed monoidal derivator from an \emph{indexed monoidal model category}.
This allows us to produce the desired derivator bicategory of parametrized spectra, starting from an indexed monoidal model category of parametrized spectra (which is essentially what was constructed in~\cite{maysig:pht}).

\part{Topological fixed-point invariants}
\label{part:topological}

In these sections we give an explicit, topological description of the linearity results for fixed point theory to the extent that it is illuminating.  
In practice, the approach we are using is clear when working in this specific context, but there are a few important results whose proofs become 
unpleasant and relatively intractable when interpreted in particular examples and are far more manageable in greater generality.  For these results
we provide an indication of the proof, but leave the formal proof to \autoref{part:derivators} and \cite{PS5}.

It is important to note that while we state the results in this section in terms of classical stable homotopy theory, they apply 
in equivariant and fiberwise stable homotopy as well.   Generalizing further, the approach of \S\ref{sec:lefschetz-number}  and \S\ref{sec:lefschetz}  applies
in any ``homotopy category'', such as those arising from model categories, and the approach of \S\ref{sec:fibration} to \S\ref{sec:reidemeister} applies to ``indexed monoidal 
model categories'' as defined in  \S\ref{sec:indexed-mmc}.

\section{The Lefschetz number as a trace}
\label{sec:lefschetz-number}

We begin by recalling how the Lefschetz number can be regarded as a trace.
Let $M$ be a based topological space.
We say that $M$ is \textbf{$n$-dualizable} if there is a based space $\dual{M}$ and maps
\begin{align*}
  \eta &\colon S^n \too M \sm \dual{M}\\
  \ep &\colon \dual{M} \sm M \too S^n
\end{align*}
such that the composites
\begin{gather}
  S^n \sm M \xto{\eta\sm \id} M\sm \dual{M}\sm M \xto{\id \sm \ep}  M \sm S^n \label{eq:tri1} \\
  \dual{M}\sm S^n \xto{\id\sm \eta} \dual{M}\sm M\sm\dual{M} \xto{ \ep\sm \id} S^n \sm \dual{M}\label{eq:tri2}
\end{gather}
become homotopic to transposition maps after smashing with some $S^m$ (in this case one says they are \emph{stably homotopic} to transpositions).
We refer to $\eta$ as the \textbf{coevaluation} and $\ep$ as the \textbf{evaluation} for the duality.

An unbased space $M$ is \textbf{$n$-dualizable} if $M$ with a disjoint basepoint, written $M_+$, is $n$-dualizable in the above sense.
It is well-known (see~\cite{atiyah:thom, lms:equivariant}) that every closed smooth manifold $M$ is $n$-dualizable.
We may take $n$ to be the dimension of a Euclidean space in which $M$ embeds, and $\dual{M}$ the Thom space of the normal bundle of the embedding.

If $M$ is an $n$-dualizable based space and $f\colon M\to M$ is a based endomorphism, we define its \textbf{trace} $\tr(f)$ to be the composite map
\begin{equation}
  S^n \xto{\eta} M \sm \dual{M} \xto{f \sm \id} M \sm \dual{M} \xto{\cong} \dual{M}\sm M \xto{\ep} S^n\label{eq:tr}
\end{equation}
and its \textbf{Lefschetz number} $L(f)$ to be the degree of this trace.
The \textbf{Euler characteristic} of $M$, $\chi(M)$, is the Lefschetz number of the identity map of $M$.
We apply all these notions to unbased spaces and maps by adjoining disjoint basepoints.
These definitions are clearly homotopy invariant, and are known to agree with all other definitions of Lefschetz number and Euler characteristic; see~\cite{dp:duality}.

There are many reasons why this formulation of Lefschetz number and Euler characteristic is useful, but for us the most important is that it makes it easy to prove the multiplicativity theorem for trivial bundles.
Specifically, if $M$ and $N$ are both $n$-dualizable based spaces, then it is easy and formal to prove that $M\sm N$ is also $n$-dualizable; its dual is $\dual{N}\sm \dual{M}$.
Moreover, if $f\colon M\to M$ and $g\colon N\to N$ are endomorphisms, we can prove by formal manipulation that $\tr(g \sm f) \sim \tr(g) \circ \tr(f)$ as maps $S^n \to S^n$, hence $L(g \sm f) = L(g) \cdot L(f)$.
Of course, if $M$, $N$, $f$, and $g$ are unbased, then $M_+ \sm N_+ \cong (M\times N)_+$ and $f_+ \sm g_+ \cong (f\times g)_+$, so we obtain the multiplicativity theorem for trivial bundles.

\section{Linearity of the Lefschetz number}
\label{sec:lefschetz}

We now generalize this by considering \emph{diagrams} of spaces.
Let $A$ and $B$ be small categories; by a \textbf{profunctor} from $A$ to $B$, we will mean a functor from $A\times B\op$ to the category \ptop of based spaces.
For example, for any small category $A$ and any based space $X$, we have a profunctor $X \sm A$ from $A$ to $A$, defined by
\[ (X\sm A)(a,a') = X\sm \hom_A(a',a)_+, \]
where $\hom_A(a',a)$ has the discrete topology.
More generally, if $M$ is a profunctor from $A$ to $B$ and $X$ is a based space, we have a profunctor $X\sm M$ defined by $(X\sm M)(a,b) = X\sm M(a,b)$.

If $M\colon A\times B\op \to \ptop$ and $N\colon B\times C\op\to\ptop$ are profunctors, we define their \textbf{composite} $M\sm_B N$ to be the profunctor from $A$ to $C$ where $(M\sm_B N)(a,c)$ is the \emph{homotopy coend} of the $(B\op\times B)$-indexed diagram $(b,b') \mapsto M(a,b) \sm N(b',c)$.
In good situations (such as when $M$ and $N$ are nondegenerately based and have the homotopy type of CW complexes), this can be constructed as the geometric realization of its \emph{simplicial bar construction}, which is a simplicial space:
\[ \xymatrix{
  \cdots \ar@<2mm>[r] \ar@{<-}@<1mm>[r] \ar[r] \ar@{<-}@<-1mm>[r] \ar@<-2mm>[r] &
  \coprod_{b,b'} M(a,b) \sm \hom_B(b',b)_+ \sm N(b',c)  \ar@<2mm>[r] \ar@{<-}[r] \ar@<-2mm>[r] &
  \coprod_{b} M(a,b) \sm N(b,c)}
  \]
See e.g.~\cite{may:csf,meyer:bar_i,shulman:htpylim} for more details.
We have the following natural equivalences for any profunctor $M\colon A\times B\op \to \ptop$ and any based space $X$.
\[(X\sm A) \sm_A M \simeq X \sm M \qquad
M \sm_B (X\sm B) \simeq X\sm M
\]

We are particularly interested in profunctors either to or from the terminal category $\tc$ (with one object and only its identity morphism).
A profunctor $M$ from $A$ to $\tc$ is the same as an $A$-shaped diagram in \ptop, while a profunctor $\Phi$ from $\tc$ to $A$ is the same as an $A\op$-shaped diagram in $\ptop$.
Their composite $\Phi\sm_A M$ is called the \textbf{$\Phi$-weighted homotopy colimit of $M$} and denoted $\colim^\Phi(M)$.
In particular, if $\Phi$ is constant at $S^0$, then $\colim^\Phi(M)$ is homotopy equivalent to the ordinary homotopy colimit of $M$.
The general framework for linearity will tell us how to compute traces of endomaps of $\Phi\sm_A M$ that are induced by endo-natural-transformations of $M$.

In order to state such a result, we need an appropriate notion of duality.
We would like to say that a profunctor $M$ from $A$ to $B$ is \textbf{right} $n$-\textbf{dualizable} if there is a profunctor $\rdual M$ from $B$ to $A$ and natural transformations
$\eta \colon S^n\sm A\to M\sm_B \rdual{M}$ and $\epsilon\colon \rdual{M}\sm_A M \to S^n\sm B$ such that the composites
\[\xymatrix@R=14pt{S^n\sm M\ar[d]^-\wr&& S^n\sm M\\
 (S^n\sm A)\sm_A M\ar[r]^-{\eta\sm \id}& M\sm_B \rdual{M}\sm_A M\ar[r]^-{\id\sm \epsilon} &M\sm_B (S^n\sm B)\ar[u]^-\wr\\
 \rdual M\sm_A (S^n\sm A)\ar[r]^-{\id \sm  \eta}& \rdual M\sm_A M\sm_B \rdual M
\ar[r]^-{\epsilon\sm \id}&(S^n\sm B)\sm_B \rdual{M} \ar[d]_-\wr\\
S^n\sm\rdual M \ar[u]_-\wr&&S^n\sm \rdual M
}\]
become naturally homotopic to identity maps\footnote{In \S\ref{sec:lefschetz-number} we said instead that the corresponding composites should be homotopic to transpositions such as $S^n\sm M \simeq M\sm S^n$.  We have now incorporated these transpositions into the equivalences at the beginning and end of the displayed composites.} after smashing with some $S^m$.

However, here there is an additional wrinkle: we need to allow maps $\eta$ and $\epsilon$ where the domains have been modified.   Formally, we allow them to be
\emph{cofibrantly replaced} with respect to the \emph{projective model structure}  \cite[\S11.6]{hirschhorn}.
We think of this as replacing the given diagram 
by a \emph{cell complex} whose cells are \emph{freely generated} at some object of the indexing category.  We will not need to be more specific about this here, since in many cases of interest (see \autoref{eg:smcofibers} below)
a very simple modification suffices.
In the more general cases, \cite{PS5} allows us to avoid the issue by using derivators, which ``package'' the fibrant and cofibrant replacements into an abstract structure so that we don't have to think about them.

The asymmetry between the categories $A$ and $B$ in the above definition means that it incorporates two very different types of duality, which can be best understood by restricting to the case when either $A$ or $B$ is the terminal category $\tc$.
On the one hand, if $M$ is an $A$-shaped diagram, we can think of it as a profunctor from $A$ to $\tc$.
If the resulting profunctor is right $n$-dualizable, we say that $M$ is \textbf{pointwise dualizable}.
On the other hand, if $\Phi$ is a $B\op$-shaped diagram, we can regard it as a profunctor from $\tc$ to $B$.
If this profunctor is right $n$-dualizable, we say that $\Phi$ is \textbf{absolute}.

The reason for the name ``pointwise dualizable'' is the following lemma.

\begin{lem}[{\cite[Lemma 3.5]{PS5}}]\label{thm:smpointwise-dual}
  A functor $M\colon A \to \ptop$ is pointwise dualizable if and only if each space $M(a)$ is $n$-dualizable in $\ptop$.
\end{lem}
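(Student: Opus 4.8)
The plan is to prove both directions by unwinding the definition of right $n$-dualizability for the profunctor $M \colon A \to \ptop$, viewed as a profunctor from $A$ to $\tc$, and comparing it pointwise with the notion of $n$-dualizability in $\ptop$ from \S\ref{sec:lefschetz-number}.

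For the ``only if'' direction, suppose $M$ is pointwise dualizable, with right dual $\rdual M$ (a profunctor from $\tc$ to $A$, i.e.\ an $A\op$-shaped diagram), coevaluation $\eta \colon S^n \sm A \to M \sm_\tc \rdual M$, and evaluation $\ep \colon \rdual M \sm_A M \to S^n \sm \tc$. First I would identify the composites appearing here: since $\tc$ is terminal, $M \sm_\tc \rdual M$ is the $A\times A\op$-indexed profunctor $(a,a') \mapsto M(a) \sm \rdual M(a')$, while $\rdual M \sm_A M$ is the homotopy coend $\int^{a} \rdual M(a) \sm M(a)$. Fixing an object $a \in A$, I would extract from $\eta$ the component $S^n \to M(a) \sm \rdual M(a)$ (projecting the target to the $(a,a)$-entry, and using that $(S^n\sm A)(a) \simeq S^n$ up to the relevant cofibrant replacement, which at a fixed object just contributes the identity summand of $\hom_A(a,a)_+$), and from $\ep$ a map $\rdual M(a) \sm M(a) \to S^n$ (including the $a$-summand into the homotopy coend, then applying $\ep$). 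I then claim these serve as coevaluation and evaluation exhibiting $M(a)$ as $n$-dualizable with dual $\rdual M(a)$: the two triangle identities \eqref{eq:tri1}--\eqref{eq:tri2} for $M(a)$ are obtained by evaluating the triangle identities for $M$ at the object $a$ (again picking out the identity-morphism summand in the bar constructions), using that $\sm_A$ with a free/representable diagram collapses to a pointwise smash. The stable homotopies required for $M(a)$ likewise come from evaluating the given natural stable homotopies at $a$.

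For the ``if'' direction, suppose each $M(a)$ is $n$-dualizable, with dual $\dual{M(a)}$, coevaluations $\eta_a \colon S^n \to M(a) \sm \dual{M(a)}$ and evaluations $\ep_a \colon \dual{M(a)} \sm M(a) \to S^n$. The key step is to assemble these into a profunctor $\rdual M$ from $\tc$ to $A$. The natural candidate is $\rdual M(a) = \dual{M(a)}$, but the functoriality in $a\op$ is not automatic: given $\alpha \colon a \to a'$ in $A$, one needs a map $\dual{M(a')} \to \dual{M(a)}$, and the standard way to produce it is $\dual{M(a')} \xto{\id \sm \eta_a} \dual{M(a')} \sm M(a) \sm \dual{M(a)} \xto{\id\sm M(\alpha)\sm\id} \dual{M(a')}\sm M(a')\sm\dual{M(a)} \xto{\ep_{a'}\sm\id} \dual{M(a)}$, the mate of $M(\alpha)$. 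Functoriality (on the nose or up to the coherence the bicategorical/derivator framework tolerates) and the naturality of the resulting $\eta$ and $\ep$ then need to be checked; this, together with verifying the triangle identities hold \emph{after passing to homotopy coends and applying cofibrant replacement}, is where the real work lies. I expect this is exactly the step the authors handle by appealing to the derivator machinery of \cite{PS5}, so in the topological part one either performs a careful but routine bar-construction computation or cites the abstract statement; I would do the latter, noting that the mates above are precisely the data of a dual pair in the bicategory of profunctors and that the general theory (objectwise duals assemble to a dual in a pointwise/indexed setting) gives the claim.

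The main obstacle is the ``if'' direction, specifically the functoriality and coherence of $a \mapsto \dual{M(a)}$ together with the homotopy-coherence issues introduced by the homotopy coend and the required cofibrant replacement with respect to the projective model structure — the point flagged in the text just before the lemma. The ``only if'' direction is essentially formal once one observes that smashing over $A$ against a representable/free diagram evaluates to a pointwise smash product, so that all the structure restricts cleanly to each object.
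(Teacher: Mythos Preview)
The paper does not prove this lemma in-text; it simply cites \cite[Lemma~3.5]{PS5}, so there is no proof here to compare against directly.

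Your ``only if'' direction is correct, but the paper's own machinery (deployed a few lines later in the sketch of \autoref{thm:smpointwise-trace}) gives a shorter argument: the representable profunctor $\Phi_a(a') = \hom_A(a',a)_+$ is absolute, and $\Phi_a \sm_A M \simeq M(a)$ by co-Yoneda, so \autoref{thm:smcompose-dual} yields $n$-dualizability of each $M(a)$ immediately from right dualizability of $M$, without extracting components of $\eta$ and $\ep$ and re-verifying the triangle identities by hand. Your approach buys explicitness (one sees that the dual of $M(a)$ is $\rdual M(a)$ on the nose); the $\Phi_a$ approach buys brevity and reuses the composition theorem already stated.

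For the ``if'' direction you have correctly located the genuine obstacle: the mate construction makes $a \mapsto \dual{M(a)}$ functorial only up to homotopy, and rectifying this into an honest $A\op$-diagram together with a coherent $\eta$ and $\ep$ through the homotopy coend is exactly where one needs either a careful model-categorical argument or the derivator/closed-bicategory framework. Deferring to the abstract result, as you do, matches the paper's own choice to cite \cite{PS5} rather than argue in place.
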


Absoluteness is a much stronger condition than pointwise dualizability.
However, the \emph{stability} built into the definition of $n$-duality allows us to exhibit a few examples.

\begin{eg}\label{eg:sminitial}
  Let $B=\emptyset$ be the empty category, and $\Phi\colon \emptyset\op\to\ptop$ the unique functor, regarded as a profunctor from $\tc$ to $\emptyset$.
  Then we can take $\rdual{\Phi}$ to also be the unique functor $\emptyset\to \ptop$, now regarded as a profunctor from $\emptyset$ to $\tc$.
  The composite $\rdual{\Phi}\sm_{\tc} \Phi$ is again the unique functor $\emptyset\times\emptyset\op \to \ptop$, while the composite $\Phi \sm_\emptyset \rdual{\Phi}$ is the functor $\tc \times \tc\op \to \ptop$ sending the unique object to $\pt$ (the one-point based space).
  The maps $\eta \colon S^0 \sm \tc \to \Phi \sm_\emptyset \rdual{\Phi}$ and $\epsilon \colon \rdual{\Phi}\sm_{\tc} \Phi \to S^0 \sm \emptyset$ are the only possible ones (no cofibrant replacement is necessary), and the required identities hold vacuously.
  Thus, $\Phi$ is absolute.
\end{eg}

\begin{eg}\label{eg:smcoprod}
  Let $B = \twc$ be the category with two objects and only identity morphisms, and $\Phi\colon (\twc)\op \to \ptop$ the functor sending each object to $S^0$, regarded as a profunctor from $\tc$ to $\twc$.
  Then we can take $\rdual{\Phi}$ to be the functor $\twc\to\ptop$ sending each object to $S^1$, regarded as a profunctor from $\twc$ to $\tc$.
  The composite $\rdual{\Phi} \sm_{\tc} \Phi$ is the functor $(\twc) \times (\twc)\op\to\ptop$ sending each object to $S^1$, while the composite $\Phi \sm_B \rdual{\Phi}$ is the functor $\tc \times \tc\op\to \ptop$ sending the unique object to $S^1 \vee S^1$ (the coproduct in \ptop of two copies of $S^1$).
  The coevaluation $\eta \colon S^1 \sm \tc \to \Phi \sm_B \rdual{\Phi}$ is the ``pinch'' map that wraps $S^1$ once around each summand of $S^1 \vee S^1$, while the evaluation $\epsilon \colon \rdual{\Phi} \sm_{\tc} \Phi \to S^1 \sm (\twc)$ is the identity on $S^1$ where possible, and otherwise the zero map.
  Again, no cofibrant replacement is necessary, and the required identities are easy to check.
  Thus, $\Phi$ is absolute.
\end{eg}

The interaction of pointwise duality and absoluteness is central to the subject.
For instance, we have the following theorem.

\begin{thm}\label{thm:smcompose-dual}
  If $\Phi$ is absolute and $M$ is pointwise dualizable, then the $\Phi$-weighted homotopy colimit $\colim^\Phi(M) = \Phi\sm_A M$ is $n$-dualizable.
  (More generally, if $X$ and $Y$ are right $n$-dualizable profunctors from $A$ to $B$ and from $B$ to $C$ respectively, then their composite $X\sm_B Y$ is also right $n$-dualizable.)
\end{thm}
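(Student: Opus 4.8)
The plan is to prove the more general statement about composites of right $n$-dualizable profunctors and deduce the statement about $\Phi\sm_A M$ as the special case $A = \tc$ (so that $X = \Phi$, $Y = M$, and $C = \tc$). Actually, to match the setup we should take $X = \Phi \colon \tc \to B$ and $Y = M \colon B \to \tc$ — but note that in the excerpt $\Phi$ is a profunctor from $\tc$ to $B$ and $M$ from $A$ to $\tc$ with $B$ renamed to $A$, so the composite is $\Phi \sm_A M$, a profunctor from $\tc$ to $\tc$, i.e.\ a based space; right $n$-dualizability of a profunctor from $\tc$ to $\tc$ is exactly $n$-dualizability of the underlying based space, which is what we want. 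So everything reduces to the parenthetical claim, and I would state and prove that.

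The strategy for the composite is the standard ``compose the dualities'' argument, familiar from symmetric monoidal categories but now carried out in the bicategorical/profunctor setting. Given right duals $\rdual X$ (from $B$ to $A$) with data $\eta_X \colon S^n \sm A \to X \sm_B \rdual X$, $\epsilon_X \colon \rdual X \sm_A X \to S^n \sm B$, and similarly $\rdual Y$ (from $C$ to $B$) with $\eta_Y, \epsilon_Y$, the candidate right dual of $X \sm_B Y$ is $\rdual Y \sm_B \rdual X$ (a profunctor from $C$ to $A$). The coevaluation is built by first applying $\eta_X$ and then, ``inside'', applying $\eta_Y$:
\[
S^n \sm A \xto{\eta_X} X \sm_B \rdual X \simeq X \sm_B (S^n \sm B) \sm_B \rdual X \xto{\id \sm \eta_Y \sm \id} X \sm_B Y \sm_C \rdual Y \sm_B \rdual X,
\]
and the evaluation symmetrically uses $\epsilon_Y$ first and then $\epsilon_X$:
\[
(\rdual Y \sm_B \rdual X) \sm_A (X \sm_B Y) \xto{\id \sm \epsilon_X \sm \id} \rdual Y \sm_B (S^n \sm B) \sm_B Y \simeq \rdual Y \sm_C Y \xto{\epsilon_Y} S^n \sm C.
\]
One then checks the two triangle identities (up to stable homotopy, i.e.\ after smashing with some $S^m$): each reduces, after inserting the associativity and unit coherences for $\sm$, to an application of one triangle identity for $X$ and one for $Y$. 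This is a routine if somewhat lengthy diagram chase, using only associativity of composition of profunctors and the unit equivalences $(S^n \sm A)\sm_A M \simeq S^n \sm M \simeq M \sm_B (S^n \sm B)$ quoted in the excerpt.

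The main obstacle is bookkeeping the cofibrant replacements flagged in the definition of right $n$-dualizability. The maps $\eta$ are only required to exist after cofibrantly replacing their domains in the projective model structure, and the homotopy coends $\sm_B$ are themselves derived (bar-construction) functors, so the ``composites'' above are only well-defined up to coherent zigzags of weak equivalences, and I must be careful that the replacements for $X\sm_B Y$ can be assembled from those for $X$ and for $Y$ — or, more honestly, that the triangle identities hold after one further cofibrant replacement and one further stabilization $S^m$. Making this genuinely rigorous is exactly the reason the authors punt to the derivator framework of \cite{PS5}, where $\sm_B$ is a strict bicategory operation and the replacements are absorbed into the structure; there the argument is the clean two-triangle-identities computation above. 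So my proof plan is: (1) reduce to the parenthetical composite statement; (2) write down $\rdual Y \sm_B \rdual X$, $\eta$, $\epsilon$ as above; (3) verify the triangle identities by reducing to those of $X$ and $Y$, treating the homotopical subtleties either by hand in the concrete case of interest (Example~\ref{eg:smcofibers}, where a simple explicit replacement suffices) or by appeal to the derivator-bicategory formalism of \autoref{part:derivators} and \cite{PS5} in general.
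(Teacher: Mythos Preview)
Your approach is exactly the paper's: it states only that the dual of $X\sm_B Y$ is $\rdual{Y}\sm_B\rdual{X}$ and that ``the rest is diagram chases,'' and your construction of $\eta$ and $\epsilon$ and reduction of the triangle identities to those of $X$ and $Y$ is precisely that chase spelled out, with the homotopical caveats handled by the derivator formalism just as the paper intends.

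One bookkeeping slip to fix: the displayed ``equivalences'' $X\sm_B\rdual{X}\simeq X\sm_B(S^n\sm B)\sm_B\rdual{X}$ and $\rdual{Y}\sm_B(S^n\sm B)\sm_B Y\simeq \rdual{Y}\sm_B Y$ are each off by a smash with $S^n$ (and the latter should read $\sm_B$, not $\sm_C$). Correctly tracked, your $\eta$ starts from $S^{2n}\sm A$ and your $\epsilon$ lands in $S^{2n}\sm C$, so what you actually prove is that $X\sm_B Y$ is $2n$-dualizable. This is harmless in the intended stable setting (the specific $n$ is immaterial once one passes to spectra, and the paper is tacitly working there), but you should say so rather than silently dropping the extra sphere.
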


Perhaps surprisingly, the proof of this theorem is quite trivial: the dual of $X\sm_B Y$ is $\rdual{Y} \sm_B \rdual{X}$ and the rest is diagram chases.

Note that the $\Phi$ of Examples \ref{eg:sminitial} and \ref{eg:smcoprod} are of the ``constant at $S^0$'' sort, so that in both cases $\Phi$-weighted homotopy colimits can be identified with ordinary homotopy colimits.
Thus, in both cases, homotopy colimits of pointwise dualizable diagrams are dualizable.

\begin{eg}
  If $B=\emptyset$, there is only one diagram $B\to\ptop$, which is trivially pointwise dualizable.
  Thus, \autoref{thm:smcompose-dual} implies that its (homotopy) colimit, which is the one-point based space $\pt$, is $n$-dualizable.
\end{eg}

\begin{eg}
  If $B=\twc$, then a diagram $B\to\ptop$ is just a pair of based spaces $X$ and $Y$, and by \autoref{thm:smpointwise-dual} it is pointwise dualizable just when $X$ and $Y$ are $n$-dualizable.
  Thus, \autoref{thm:smcompose-dual} implies that the homotopy colimit of such a diagram, which (at least if $X$ and $Y$ are nondegenerately based) is just the wedge $X\vee Y$, is also $n$-dualizable.
\end{eg}

These conclusions (the initial object is dualizable, and coproducts of dualizable spaces are dualizable) are easy to prove by other means.
Somewhat less obvious (though also well-known) is the following example, in which the weight is non-constant.

\begin{eg}\label{eg:smcofibers}
  Let $B$ be the arrow category, with two objects $a$ and $b$, and one nonidentity morphism $\alpha \colon a\to b$.
  Let $\Phi \colon B\op\to \ptop$ be defined by $\Phi(b) = S^0$ and $\Phi(a) = \pt$, with $\Phi(\alpha)\colon S^0 \to \pt$ the unique map.
  Then we can take $\rdual{\Phi}\colon B\to \ptop$ to have $\rdual{\Phi}(a) = S^0$ and $\rdual{\Phi}(b) = \pt$, with $\rdual{\Phi}(\alpha)\colon S^0 \to \pt$ again the unique map.
  
  Their composite $\Phi \sm_{B} \rdual{\Phi}$ is the homotopy pushout of the diagram 
  \begin{equation}
  \vcenter{\xymatrix{
      S^0 \sm S^0\ar[r]\ar[d] &
      S^0 \sm \pt\\
      \pt \sm S^1&
    }}
  \qquad\text{or equivalently}\qquad
  \vcenter{\xymatrix{
      S^0\ar[r]\ar[d] &
      \pt\\
      \pt&
    }}
  \end{equation}
  which is the suspension of $S^0$, namely $S^1$.
  Technically by this we mean the profunctor $S^1\sm \tc$; thus we can take $n=1$ and let the coevaluation $\eta \colon  S^1 \sm \tc \to \Phi \sm_{B} \rdual{\Phi}$ be the isomorphism.

  Their composite $\rdual{\Phi} \sm_{\tc} \Phi$ in the other order is the $(B\times B\op)$-indexed diagram that is $\pt$ everywhere except that $(\rdual{\Phi} \sm_{\tc} \Phi)(a,b) = S^0$, which can be drawn like the square on the left below (with the $B$ direction horizontal and the $B\op$ direction vertical).
  \begin{equation}
    \vcenter{\xymatrix@-.5pc{
        \pt\ar[r]\ar@{<-}[d] &
        \pt\ar@{<-}[d]\\
        S^0\ar[r] &
        \pt
      }}
    \hspace{2cm}
    \vcenter{\xymatrix@-.5pc{
        S^1\ar[r]\ar@{<-}[d] &
        S^1\ar@{<-}[d]\\
        \pt\ar[r] &
        S^1
      }}
  \end{equation}
  Similarly, the $(B\times B\op)$-indexed diagram $S^1\sm B$ can be drawn like the square on the right above.
  Here we do need a cofibrant replacement in order to have a nontrivial map $\rdual{\Phi} \sm_{\tc} \Phi \to S^1\sm B$, and as remarked previously, in this case we can use a \emph{Reedy model structure} for the diagram shape $B\times B\op$.
  The resulting cofibrant replacement of $\rdual{\Phi} \sm_{\tc} \Phi$ looks like this:
  \begin{equation}
    \vcenter{\xymatrix@-.5pc{
        D^1\ar[r]\ar@{<-}[d] &
        D^2\ar@{<-}[d]\\
        S^0\ar[r] &
        D^1
      }}
  \end{equation}
  and the evaluation $\epsilon\colon \rdual{\Phi} \sm_{\tc} \Phi \to S^1\sm B$ wraps each $D^1$ around the corresponding $S^1$.
  The required identities hold, and so $\Phi$ is absolute.

  Now a diagram $B\to\ptop$ is just a pair of based spaces and a map between them, $i\colon Y\to X$, and by \autoref{thm:smpointwise-dual} it is pointwise dualizable just when $X$ and $Y$ are $n$-dualizable.
  The $\Phi$-weighted colimit of such a diagram is the homotopy pushout of
  \begin{equation}
    \vcenter{\xymatrix{
        S^0 \sm Y\ar[r]\ar[d] &
        S^0 \sm X\\
        \pt \sm Y &
      }}
    \qquad\text{or equivalently}\qquad
    \vcenter{\xymatrix{
        Y\ar[r]\ar[d] &
        X\\
        \pt &
      }}
  \end{equation}
  which is to say the \emph{mapping cone} of $i$.
  Thus, \autoref{thm:smcompose-dual} implies that the cone of any map between $n$-dualizable spaces is $n$-dualizable.
\end{eg}

Next we define traces for endomorphisms of dualizable profunctors.
However, there is a problem: the isomorphism $M\sm \rdual{M} \cong \rdual{M} \sm M$ in~\eqref{eq:tr} doesn't even make sense for profunctors, since $M\sm_B \rdual{M}$ and $\rdual{M} \sm_A M$ are not even diagrams of the same shape.
For this purpose we introduce a ``quotienting'' operation that makes them both into ordinary spaces, following~\cite{kate:traces,PS2}.

Specifically, if $M$ is any profunctor from $A$ to $A$, then we define its \textbf{shadow}, denoted $\sh{M}_A$, to be its homotopy coend.
If $M$ and $N$ are profunctors from $A$ to $B$ and from $B$ to $A$ respectively, then we have a natural equivalence
\[\sh{M\sm_B N}_A \simeq \sh{N\sm_A M}_B.\]
If $M = S^n \sm A$, then its shadow is the $n$-fold suspension of a space representing the ``Hochschild homology'' of the category $A$.
We write $\sh{A}_A$ for $\sh{S^0 \sm A}_A$.

The stable maps $S^n\to \sh{S^n\sm A}_A$ play an essential role in this section, so it will be necessary to have a more explicit understanding of them.
First observe that smashing with $S^n$ commutes with the homotopy coend, so it is enough to consider stable maps $S^0\to \sh{A}_A$, i.e.\ elements of the $0^{\mathrm{th}}$ stable homotopy group $\pi_0^s(\sh{A}_A)$ of $\sh{A}_A$.
Now for an unbased space $Y$, there is an integer $n$ so that $\pi_0^s(Y_+)\cong \pi_n(\Sigma^n(Y_+))$.
Since $\tilde{H}_n(\Sigma^n (Y_+))\cong  H_0(Y)$, the Hurewicz theorem implies that $\pi_0^s(Y_+)\cong H_0(Y)\cong \mathbb{Z}\pi_0(Y)$ is the free abelian group on the set of connected components of $Y$.
Therefore, to compute $\pi_0^s(\sh{A}_A)$, it will suffice to show that $\sh{A}_A$ has a disjoint basepoint and to count the other components.

For this purpose, recall that $(S^0 \sm A)(a,a') = \hom_A(a',a)_+$.
Homotopy coends commute with adjoining disjoint basepoints, so the homotopy coend of $S^0 \sm A$ is the homotopy coend of the diagram $\hom_A$ of unbased spaces, with a disjoint basepoint.
This homotopy coend, in turn, is the geometric realization of the simplicial set
\[ \xymatrix{
  \cdots \ar@<2mm>[r] \ar@{<-}@<1mm>[r] \ar[r] \ar@{<-}@<-1mm>[r] \ar@<-2mm>[r] &
  \coprod_{a,a'} \hom_A(a,a') \times \hom_A(a',a) \ar@<2mm>[r] \ar@{<-}[r] \ar@<-2mm>[r] &
  \coprod_{a} \hom_A(a,a)}
  \]
Therefore, its set of connected components is the set of endomorphisms $\alpha\colon a\to a$ in $A$, modulo the equivalence relation generated by $\alpha \beta \sim \beta\alpha$ whenever $\alpha\colon a\to b$ and $\beta\colon b\to a$ are morphisms that are composable in both orders.
We call these \textbf{conjugacy classes} of $A$, since when $A$ is a group $G$ regarded as a one-object groupoid, they are precisely the conjugacy classes of $G$.
Thus, $\pi_0^s(\sh{A}_A)$ is the free abelian group on the conjugacy classes of $A$.
We write the conjugacy class of $\alpha\colon a\to a$ as $[\alpha]$.

We can now define traces for dualizable profunctors.
If $M$ is a right dualizable profunctor from $A$ to $B$, the \textbf{trace} of a map $f\colon M \to M$ is the composite
\[\xymatrix{\sh{S^n\sm A}_A\ar[d]^-\eta&&\sh{S^n\sm B}_B\\
\sh{M\sm_B  \rdual M}_A\ar[r]^{f\sm \id}&
\sh{M\sm _B \rdual M}_A\ar[r]^\cong&\sh{\rdual M\sm_A M}_B\ar[u]^-{\epsilon}\\
}\]

The following is the central theorem of the subject: the \emph{origin of all linearity formulas}.
It says that in the situation of \autoref{thm:smcompose-dual}, we can calculate the trace of an induced map on the colimit from the trace of an endomap of a diagram.
Like \autoref{thm:smcompose-dual}, its proof is essentially entirely formal manipulation.

\begin{thm}[{\cite[Prop.~7.5]{PS2}}]\label{thm:dercomposite}
  If $M\colon A\to\ptop$ is pointwise dualizable and $\Phi\colon A\op\to\ptop$ is absolute, then for any $f\colon M\to M$ and $g\colon \Phi\to\Phi$ we have
  \[ \tr(g\sm_A f) = \tr(f) \circ \tr(g). \]
  (More generally, if $X$ and $Y$ are right $n$-dualizable profunctors from $A$ to $B$ and from $B$ to $C$ respectively, then for any $f\colon Y\to Y$ and $g\colon X\to X$ we have $\tr(g\sm_B f) = \tr(f) \circ \tr(g)$.)
\end{thm}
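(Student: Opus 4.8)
The plan is to prove the more general statement about composites of right $n$-dualizable profunctors $X \colon A \pto B$ and $Y \colon B \pto C$, from which the case of $\colim^\Phi(M)$ follows by specializing $A = \tc$, $X = \Phi$, $B = A$, $Y = M$, $C = \tc$. Following the remark after \autoref{thm:smcompose-dual}, the dual of $X \sm_B Y$ is $\rdual{Y} \sm_B \rdual{X}$, with coevaluation and evaluation built from those of $X$ and $Y$ in the evident way: $\eta_{X\sm_B Y}$ is the composite $S^n \sm A \to X \sm_B \rdual{X} \to X \sm_B (S^n \sm B) \sm_B \rdual X \simeq X \sm_B Y \sm_C \rdual Y \sm_B \rdual X$ using $\eta_X$ followed by $\eta_Y$ inserted in the middle, and dually for $\epsilon$. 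The induced endomorphism of $X \sm_B Y$ is $g \sm_B f$, and the claim is purely an identity of stable maps $\sh{S^n \sm A}_A \to \sh{S^n \sm C}_C$.

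The key step is to recognize this as an instance of the abstract bicategorical trace composition formula, \cite[Prop.~7.5]{PS2}, applied in the bicategory whose objects are small categories, whose $1$-cells $A \pto B$ are profunctors (valued in $\ptop$), whose composition is $\sm_B$ (homotopy coend), whose unit $1$-cells are $S^0 \sm A$, and whose shadow is the homotopy coend $\sh{-}$. Concretely, I would first verify that the structure recalled in this section — associativity and unit equivalences for $\sm$, the shadow with its cyclicity equivalence $\sh{M \sm_B N}_A \simeq \sh{N \sm_A M}_B$, and the coherence among these — assembles into a (bi)category with shadow in the sense of \cite{kate:traces,PS2}, at least up to the usual homotopy-coherence caveats and after the cofibrant-replacement adjustments flagged in the definition of right dualizability. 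Once that is in place, $X$ right $n$-dualizable means precisely that $X$ is a dualizable $1$-cell with dual $\rdual X$ (after smashing with $S^m$ to strictify the triangle identities), and $\tr(g \sm_B f) = \tr(f) \circ \tr(g)$ is exactly the conclusion of the cited proposition: the trace of a composite of dualizable $1$-cells equipped with compatible endomorphisms is the composite of the traces.

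Therefore the proof is essentially a diagram chase with the standard "snake" manipulations: one writes out $\tr(g \sm_B f)$ as the big composite around the shadows, uses the definition of $\eta_{X \sm_B Y}$ and $\epsilon_{X \sm_B Y}$ to expand it, slides the copy of $\epsilon_X$ (equivalently $\eta_X$) past the identity $1$-cells and applies the cyclicity of the shadow to split the single large loop into two loops, one realizing $\tr(g)$ (a map $\sh{S^n \sm A}_A \to \sh{S^n \sm B}_B$) and one realizing $\tr(f)$ (a map $\sh{S^n \sm B}_B \to \sh{S^n \sm C}_C$), with $f$ and $g$ appearing in their respective loops because $g \sm_B f$ acts independently on the two tensor factors. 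This is the content of \cite[Prop.~7.5]{PS2} and no genuinely new argument is needed.

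The main obstacle is bookkeeping rather than conceptual: making precise that our profunctor bicategory genuinely satisfies the axioms of a bicategory with shadow, given that $\sm_B$ is only associative and unital up to coherent equivalence and that the definition of right $n$-dualizability already incorporated ad hoc cofibrant replacements of the domains of $\eta$ and $\epsilon$. One must check that these replacements are compatible with composition — i.e.\ that a cofibrant replacement appropriate for $X \sm_B Y$ can be built from those for $X$ and $Y$ — so that the cited abstract proposition applies on the nose. This is precisely the kind of verification that becomes unpleasant "when interpreted in particular examples and are far more manageable in greater generality," and accordingly we defer the rigorous treatment to \autoref{part:derivators} and \cite{PS5}, where derivators package the replacements away; here we are content with the formal diagram chase.
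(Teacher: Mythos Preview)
Your proposal is correct and matches the paper's approach: the paper gives no self-contained proof but simply cites \cite[Prop.~7.5]{PS2} and remarks that ``its proof is essentially entirely formal manipulation,'' which is precisely what you do --- identify the profunctor setup as an instance of the abstract bicategory-with-shadow framework and invoke the cited proposition. Your additional discussion of the cofibrant-replacement bookkeeping and the deferral to \autoref{part:derivators} and \cite{PS5} also mirrors exactly how the paper handles these technicalities.
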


In our applications, we will often take $g=\id_\Phi$ (although in the proof of \autoref{thm:smpointwise-trace} we will need the more general version).
In this case, $\id_\Phi \sm_A f$ is just the endomorphism of $\colim^\Phi(M)$ induced by $f$, which we denote $\colim^\Phi(f)$; thus we have
\begin{equation}
  \tr(\colim^\Phi(f)) = \tr(f) \circ \tr(\id_\Phi).\label{eq:dercomposite}
\end{equation}
In order to make practical use of this, we need to be able to identify $\tr(f)$ and $\tr(\id_\Phi)$.
Note that $\tr(f)$ is a stable map $\sh{A}_A \to S^0$, i.e.\ an element of the stable cohomotopy of $\sh{A}_A$.
This could be hard to compute completely.
However, for~\eqref{eq:dercomposite} all that matters is its composite with the stable map $\tr(\id_\Phi)\colon S^0 \to \sh{A}_A$, which is an element of $\pi^s_0(\sh{A}_A)$.
Thus, it will be enough to know the composites of $\tr(f)$ with the generators of $\pi^s_0(\sh{A}_A)$, which as we have seen are induced by conjugacy classes in $A$.
This is what the following lemma provides.

\begin{lem}[{\cite[Lemma 5.11]{PS5}}]\label{thm:smpointwise-trace}
  If $M\colon A\to\ptop$ is pointwise dualizable and $f\colon M\to M$, then for any conjugacy class $[a\xto{\alpha} a]$ in $A$, the composite
  \begin{equation}
    \xymatrix{ S^n \ar[r]^-{[\alpha]} & S^n \sm \sh{A}_A \ar[r]^-{\tr(f)} & S^n }\label{eq:smderomtr}
  \end{equation}
  is stably equal to the trace (in \ptop) of the composite
  \begin{equation}
    \xymatrix{ M_a \ar[r]^-{M_\alpha} & M_a \ar[r]^-{f_a} & M_a }\label{eq:smderomres}
  \end{equation}
   where $M_a$ is the value of $M$ on $a$.
\end{lem}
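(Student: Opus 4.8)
The plan is to reduce the statement to a finite-dimensional computation by ``evaluating at $a$'', making essential use of \autoref{thm:smpointwise-dual} and its proof, which tells us not only that each $M_a$ is $n$-dualizable but presumably also gives an explicit description of the dual of $M$ built pointwise from the duals $\dual{M_a}$ together with the inclusion combinatorics of $A$. First I would recall that for a pointwise-dualizable $M\colon A\to\ptop$, the dual $\rdual M$ is a profunctor from $\tc$ to $A$ (i.e.\ an $A\op$-diagram) whose underlying construction packages the $\dual{M_a}$; the coevaluation $\eta\colon S^n\sm A\to M\sm_A\rdual M$ and evaluation $\epsilon\colon \rdual M\sm_\tc M\to S^n$ (after the required cofibrant replacement) restrict, at the object $a$, to something closely related to the pointwise coevaluation $\eta_a\colon S^n\to M_a\sm\dual{M_a}$ and evaluation $\epsilon_a\colon \dual{M_a}\sm M_a\to S^n$ from \S\ref{sec:lefschetz-number}.

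The key computation is then to trace through the definition of $\tr(f)$ for the profunctor $M$ (the composite displayed just before the lemma, living in $\sh{S^n\sm A}_A$ and $\sh{S^n\sm\tc}_\tc\simeq S^n$) and precompose it with the stable map $[\alpha]\colon S^n\to S^n\sm\sh{A}_A$. The map $[\alpha]$ is, by the explicit description of $\pi_0^s(\sh{A}_A)$ given in the text, the inclusion of the point (or cell) of the homotopy coend of $\hom_A$ corresponding to the endomorphism $\alpha\colon a\to a$; concretely it picks out the $0$-simplex $\alpha\in\hom_A(a,a)$ in the bar construction. Precomposing $\tr(f)$ with this inclusion therefore restricts the coend-level coevaluation and evaluation to the single object $a$ and the single endomorphism $\alpha$. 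Carrying this out, I expect $\eta$ to become $\eta_a$, the map $f\sm\id$ to become $f_a\sm\id$, the shadow-symmetry isomorphism to become the transposition $M_a\sm\dual{M_a}\cong\dual{M_a}\sm M_a$, and the evaluation $\epsilon$ to become $\epsilon_a$ \emph{precomposed with the action of $\alpha$}, namely $\dual{M_a}\sm M_a\xto{\id\sm M_\alpha}\dual{M_a}\sm M_a\xto{\epsilon_a}S^n$ (this is where $M_\alpha$ enters, since the coend identifies the two legs over a morphism via its action). Assembling these pieces gives exactly the composite $S^n\xto{\eta_a}M_a\sm\dual{M_a}\xto{f_a\sm\id}M_a\sm\dual{M_a}\xto{\cong}\dual{M_a}\sm M_a\xto{\id\sm M_\alpha}\dual{M_a}\sm M_a\xto{\epsilon_a}S^n$, which by definition \eqref{eq:tr} (and the elementary fact that $\tr(f_a\circ M_\alpha)$ can be computed by inserting $M_\alpha$ on either side of $f_a$) is $\tr(f_a\circ M_\alpha)$, the trace of \eqref{eq:smderomres}.

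The main obstacle will be making precise the claim that restricting the coend-level $\eta$ and $\epsilon$ along the inclusion of the $\alpha$-component yields $\eta_a$ and $\epsilon_a\circ(\id\sm M_\alpha)$ ``on the nose'', or at least stably. Two things complicate this. First, the cofibrant replacements built into the definition of right $n$-dualizability for profunctors (and hidden inside the proof of \autoref{thm:smpointwise-dual}) mean the relevant maps are only defined up to homotopy on a replaced diagram, so I would work stably throughout and track everything only up to stable homotopy, which is all the lemma claims. Second, the bookkeeping of the bar-construction model for the homotopy coend $\sh{M\sm_A\rdual M}_A$ versus $\sh{\rdual M\sm_\tc M}_\tc$ and the precise form of the shadow-symmetry equivalence between them requires care: one must check that under the identification $\pi_0^s(\sh{A}_A)\cong\mathbb{Z}\{\text{conjugacy classes}\}$ the generator $[\alpha]$ is sent, through the two composites making up $\tr(f)$, to the diagrams described above with $M_\alpha$ appearing in the correct place. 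Since \autoref{thm:dercomposite} and \autoref{thm:smcompose-dual} are quoted as already proved by ``formal diagram chases'' in \cite{PS2,PS5}, I would cite the explicit description of $\rdual M$, $\eta$, $\epsilon$ from the proof of \autoref{thm:smpointwise-dual} in \cite{PS5} and perform the restriction argument only at the level of components, which keeps the diagram chase finite and manageable.
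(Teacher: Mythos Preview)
Your approach is plausibly correct but takes a genuinely different route from the paper's.  You propose to unpack the explicit pointwise description of $\rdual M$, $\eta$, and $\epsilon$ coming from \autoref{thm:smpointwise-dual}, restrict the bar-construction model of $\tr(f)$ to the $0$-simplex labeled by $\alpha$, and then verify by hand that the resulting composite is $\tr(f_a\circ M_\alpha)$.  The paper instead avoids all of this computation by introducing the \emph{representable} profunctor $\Phi_a(a')=\hom_A(a',a)_+$ from $\tc$ to $A$, showing it is absolute with dual $\rdual{\Phi_a}(a')=\hom_A(a,a')_+$, and observing three facts: (i)~$\Phi_a\sm_A M\simeq M_a$, (ii)~the endomorphism $\omega_\alpha\colon\Phi_a\to\Phi_a$ given by composition with $\alpha$ satisfies $\omega_\alpha\sm_A f = f_a\circ M_\alpha$, and (iii)~$\tr(\omega_\alpha)=[\alpha]$.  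The lemma is then an immediate consequence of \autoref{thm:dercomposite} applied to $\Phi_a$ and $M$ with $g=\omega_\alpha$.

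What each approach buys: the paper's argument is almost entirely formal once one knows that representables are absolute and that $[\alpha]$ \emph{is itself a trace}; in particular, it never touches the cofibrant replacements or the simplicial structure of the coend, and it manifestly generalizes (this is why the same proof reappears verbatim in \autoref{thm:bipointwise-trace} and \autoref{thm:derbicatomega}).  Your approach, by contrast, would give an explicit hands-on understanding of how $\tr(f)$ acts on generators, but at the cost of exactly the bookkeeping you flag as the ``main obstacle'': tracking where $M_\alpha$ enters through the shadow-symmetry and the coend identifications is delicate, and doing it rigorously would essentially force you to reprove a special case of \autoref{thm:dercomposite} by hand.  There is no outright gap in your plan, but the paper's trick of recognizing $[\alpha]=\tr(\omega_\alpha)$ is the idea you are missing, and it makes the argument an order of magnitude shorter.
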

\begin{proof}[Sketch of proof]
  Define a profunctor $\Phi_a$ from $\tc$ to $A$ by
  \[ \Phi_a(a') = \hom_A(a',a)_+. \]
  This is absolute; its right dual is defined by
  \[ (\rdual{\Phi_a})(a') = \hom_A(a,a')_+. \]
  The evaluation and coevaluation are defined by the inclusion of the identity map and the composition of morphisms.
  Moreover, the $\Phi_a$-weighted colimit of $M$ is just the object $M_a$.
  We also have an endomorphism $\omega_\alpha\colon \Phi_a \to \Phi_a$ induced by composition with $\alpha$, and $\omega_\alpha \sm_A f$ is the composite $f_a \circ M_\alpha$ appearing in \eqref{eq:smderomres}.
  Finally, $\tr(\omega_\alpha)$ can be shown to be $[\alpha]\colon S^n \to S^n \sm \sh{A}_A$, so \autoref{thm:dercomposite} implies the desired statement.
\end{proof}

Thus, in order to apply \autoref{thm:dercomposite} for any particular weight $\Phi$, it suffices to show that $\Phi$ is absolute, and calculate $\tr(\id_\Phi)$.
We refer to $\tr(\id_\Phi)$ as the \textbf{coefficient vector} of $\Phi$.
Recall that it is an element of the 0th stable homotopy group of $\sh{A}_A$, hence a linear combination $\sum_{[\alpha]} \phi_{[\alpha]} \cdot [\alpha]$ of the generators $[\alpha]$ corresponding to conjugacy classes in $A$, with integer coefficients $\phi_{[\alpha]}$.
Combining this with \autoref{thm:smpointwise-trace}, we obtain:

\begin{thm}\label{thm:lin}
  If $M\colon A\to\ptop$ is pointwise dualizable, $\Phi\colon A\op\to\ptop$ is absolute, and $f\colon M\to M$, then we have
  \[ \tr(\colim f) = \sum_{[\alpha]} \phi_{[\alpha]} \tr(f_a \circ M_\alpha) \]
  where $\phi_{[\alpha]}$ are the coefficients of $\Phi$.
\end{thm}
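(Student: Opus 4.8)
The plan is to combine the two preceding results: the composite formula \eqref{eq:dercomposite}, which handles the weighted colimit abstractly, and \autoref{thm:smpointwise-trace}, which evaluates the relevant stable maps concretely. First I would specialize \autoref{thm:dercomposite} to $g = \id_\Phi$ --- noting that $\colim^\Phi(f) = \id_\Phi \sm_A f$ --- to obtain \eqref{eq:dercomposite}, namely $\tr(\colim^\Phi(f)) = \tr(f) \circ \tr(\id_\Phi)$. This exhibits the trace on the colimit as the composite of the stable cohomotopy class $\tr(f)\colon \sh{A}_A \to S^0$ with the coefficient vector $\tr(\id_\Phi)\colon S^0 \to \sh{A}_A$, all up to smashing with $S^n$, which commutes with everything in sight since it commutes with the homotopy coend.

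Next I would expand the coefficient vector using the description of $\pi_0^s(\sh{A}_A)$ established before \autoref{thm:smpointwise-trace}: this group is free abelian on the conjugacy classes $[\alpha]$ of $A$, and by definition $\tr(\id_\Phi) = \sum_{[\alpha]} \phi_{[\alpha]} \cdot [\alpha]$ is a finite integer combination of the corresponding generators. Since composition of stable maps is additive in each variable, precomposing $\tr(f)$ with this sum distributes:
\[ \tr(\colim^\Phi(f)) = \sum_{[\alpha]} \phi_{[\alpha]} \,\bigl(\tr(f) \circ [\alpha]\bigr), \]
where each $\tr(f)\circ[\alpha]$ is exactly the composite \eqref{eq:smderomtr} appearing in \autoref{thm:smpointwise-trace}. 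Applying that lemma, $\tr(f)\circ[\alpha]$ is stably equal to $\tr(f_a\circ M_\alpha)$, the trace in \ptop of the endomorphism $M_a \xto{M_\alpha} M_a \xto{f_a} M_a$; substituting these identifications term by term yields the claimed formula.

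There is essentially no genuine obstacle here: the theorem is a formal consequence of \autoref{thm:dercomposite} and \autoref{thm:smpointwise-trace}, and the only points requiring care are bookkeeping ones. Specifically, one should be sure the bilinearity of composition is applied in the stable homotopy category, so that the finite combination $\tr(\id_\Phi) = \sum_{[\alpha]} \phi_{[\alpha]}[\alpha]$ may legitimately be pulled through precomposition with $\tr(f)$ --- finiteness being guaranteed by $\tr(\id_\Phi)$ lying in the free abelian group $\pi_0^s(\sh{A}_A)$ --- and one should keep track of the single dimension $n$ throughout, which is automatic.
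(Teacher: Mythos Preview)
Your argument is correct and matches the paper's own reasoning exactly: the theorem is stated immediately after the sentence ``Combining this with \autoref{thm:smpointwise-trace}, we obtain,'' so the intended proof is precisely the combination of \eqref{eq:dercomposite} with \autoref{thm:smpointwise-trace} via the expansion $\tr(\id_\Phi) = \sum_{[\alpha]} \phi_{[\alpha]}[\alpha]$ that you spell out.
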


\begin{eg}
  If $A$ is the empty category there is a unique functor $M\colon A\to\ptop$, which is trivially pointwise dualizable; hence its colimit, which is the one-point space $\pt$, is dualizable.
 The shadow of $A$ is also $\pt$, so the trace of the unique endomorphism of $\pt$ is the composite $S^0 \to \pt \to S^0$, i.e.\ the zero endomorphism of $S^0$.
\end{eg}

\begin{eg}\label{eg:lefschetz-coproducts}
  If $A=\twc$ then 
  we have $\pi_0^s(\sh{A}) \cong \mathbb{Z}\oplus\mathbb{Z}$.  For a pointwise dualizable $M\colon \twc\to \ptop$ and $f\colon M\to M$, \autoref{thm:smpointwise-trace} implies that 
  the map $\pi_0^s(\sh{A}_A)\to \pi_0^s(S^0)$ induced by $\tr(f)$ is the row vector composed of $\tr(f_a)$ and $\tr(f_b)$.
  Thus, we have
  \[\tr(f_a \vee f_b) = \phi_a \cdot \tr(f_a) + \phi_b\cdot \tr(f_b)\]
  for some endomorphisms $\phi_a, \phi_b$ of $S^n$.
  Knowing that such $\phi_a$ and $\phi_b$ exist, and are the same for all $M$ and $f$, enables us to calculate them easily.
  Namely, let $M_a = \pt$ and $M_b=S^0$ and let $f$ be the identity.
  Then $\tr(f_a)=0$ by the previous example, and $\tr(f_b)=1$ since it is the identity; while $M_a \vee M_b \cong S^0$ and $f_a \vee f_b = \id_{S^0}$, so that $\tr(\colim^\Phi f)=1$ as well.
  Thus, $1 = \phi_a \cdot 0 + \phi_b \cdot 1$, so $\phi_b = 1$.
  Similarly, $\phi_a=1$, so our linearity formula is
  \[\tr(f_a \vee f_b) = \tr(f_a) + \tr(f_b).\]
  This is, of course, well-known and easy to prove by other means.
\end{eg}

Finally, applying this to \autoref{eg:smcofibers}, we obtain the linearity of Lefschetz numbers.

\begin{thm}\label{Lefschetz}
  Suppose $X$ and $Y$ are closed smooth manifolds, and we have continuous maps $i\colon Y\to X$, $f\colon X\to X$ and $g\colon Y\to Y$ so that 
  $f\circ i=i\circ g$.  Then  \[L(f)-L(g)=L(h)\] where $h$ is the endomorphism of the mapping cone of $i$ induced by $f$ and $g$.
\end{thm}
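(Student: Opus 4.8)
The plan is to recognize this as the special case of \autoref{thm:lin} obtained by plugging in the specific absolute weight $\Phi$ of \autoref{eg:smcofibers}, for which the indexing category $B$ is the arrow category with objects $a$, $b$ and one nonidentity arrow $\al\colon a\to b$. First I would observe that the hypotheses supply exactly the data of a pointwise dualizable diagram $M\colon B\to\ptop$ together with an endomorphism $f\colon M\to M$: set $M(a)=Y_+$, $M(b)=X_+$, $M(\al)=i_+$ (this is pointwise dualizable by \autoref{thm:smpointwise-dual}, since $X$ and $Y$ are closed smooth manifolds and hence $n$-dualizable), and let the endomorphism be given by $g_+$ on $M(a)$ and $f_+$ on $M(b)$; the relation $f\circ i=i\circ g$ is precisely the naturality square making this a map of diagrams. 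By \autoref{eg:smcofibers}, the $\Phi$-weighted colimit of $M$ is the (unbased) mapping cone of $i$ with a disjoint basepoint, and the induced endomorphism $\colim^\Phi(f)$ is $h_+$; hence $L(h)=\tr(h_+)=\tr(\colim^\Phi f)$, which is what \autoref{thm:lin} computes.

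Next I would identify the conjugacy classes of $B$ and the coefficient vector of $\Phi$. The arrow category has exactly two endomorphisms, $\id_a$ and $\id_b$, and no nonidentity composable pairs in both orders, so there are two conjugacy classes $[\id_a]$ and $[\id_b]$, giving $\pi_0^s(\sh{B}_B)\cong\bbZ\oplus\bbZ$. Thus \autoref{thm:lin} reads
\[ L(h) = \tr(\colim^\Phi f) = \phi_{[\id_a]}\cdot\tr(f_a\circ M_{\id_a}) + \phi_{[\id_b]}\cdot\tr(f_b\circ M_{\id_b}) = \phi_{[\id_a]}\cdot L(g) + \phi_{[\id_b]}\cdot L(f), \]
so everything reduces to showing $\phi_{[\id_a]}=-1$ and $\phi_{[\id_b]}=1$. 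As in \autoref{eg:lefschetz-coproducts}, I would determine these universal coefficients by testing on simple diagrams rather than computing $\tr(\id_\Phi)$ directly. Taking $M(a)=\pt$, $M(b)=S^0$ recovers the unreduced suspension picture of \autoref{eg:smcofibers}: the mapping cone is $S^1$, and with $f=\id$ we get $\tr(f_a)=0$, $\tr(f_b)=1$, and $\tr(\colim^\Phi\id)=\chi(S^1)$ computed as the Lefschetz number of the identity of $S^1$, which one can pin down (it is $0$ via the reduced-homology computation, or directly from the duality data in \autoref{eg:smcofibers}). This gives $\phi_{[\id_b]}=1$ after also testing with $M(a)=S^0$, $M(b)=\pt$ (cone $=\pt$, trace $0$) to read off $\phi_{[\id_a]}$; the sign $\phi_{[\id_a]}=-1$ comes out because collapsing $Y$ contributes a suspension, i.e.\ a degree shift by $1$, flipping the sign. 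Rewriting $L(h)=L(f)-L(g)$ is then immediate.

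The main obstacle I anticipate is the sign, i.e.\ verifying that the coefficient attached to the ``subspace'' object $a$ is genuinely $-1$ and not $+1$. This is exactly the input where the non-constant weight $\Phi$ matters — unlike the coproduct case of \autoref{eg:lefschetz-coproducts} where both coefficients are $+1$ — and it is governed by the explicit coevaluation/evaluation and the Reedy-cofibrant replacement described in \autoref{eg:smcofibers}, where the $D^1$'s wrapping around the $S^1$'s encode a suspension. Concretely I would extract it from the test computation $\chi(\text{cone of }(\pt\to S^0))=\chi(S^1)$ together with $\chi(\text{cone of }(S^0\to\pt))=\chi(\pt)$: the first forces $\phi_{[\id_a]}\cdot 0 + \phi_{[\id_b]}\cdot 1 = 0$ is wrong, so one must instead use the test $M(a)=S^0$, $M(b)=S^0$, $i=\id$, whose cone is contractible with $\tr=0$, yielding $\phi_{[\id_a]}+\phi_{[\id_b]}=0$ and hence $\phi_{[\id_a]}=-1$. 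Once the two coefficients $(-1,1)$ are in hand, the theorem follows formally from \autoref{thm:lin}, and no topology beyond $n$-dualizability of closed smooth manifolds (already cited) is needed.
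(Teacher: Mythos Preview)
Your overall strategy is exactly the paper's: apply \autoref{thm:lin} to the arrow category $B$ with the absolute weight $\Phi$ of \autoref{eg:smcofibers}, identify the two conjugacy classes $[\id_a],[\id_b]$, and determine the universal coefficients $\phi_a,\phi_b$ by plugging in simple test diagrams. Once $(\phi_a,\phi_b)=(-1,1)$ is established, the conclusion is immediate.

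Where your write-up goes wrong is in the test computations themselves. You repeatedly miscompute cofibers: the cofiber of $\pt\to S^0$ is $S^0$ (not $S^1$, since $\pt$ is the zero object in based spaces), and the cofiber of $S^0\to\pt$ is $\Sigma S^0\simeq S^1$ (not $\pt$). You have these two swapped, which is why your first attempt produces the nonsensical equation $\phi_b=0$ and you have to backtrack. Your eventual recovery via $(S^0\xrightarrow{\id}S^0)$ with contractible cofiber is correct and yields $\phi_a+\phi_b=0$, but you never cleanly obtain the second equation $\phi_b=1$; you assert it without a valid supporting computation.

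The paper's test cases are the clean ones you were reaching for: take $M=(S^n\xrightarrow{\id}S^n)$ with cofiber $\pt$ to get $0=\phi_a+\phi_b$, and $M=(\pt\to S^n)$ with cofiber $S^n$ to get $1=\phi_b$. (Here $S^n$ is used because it is the unit for $n$-duality, with $\tr(\id_{S^n})=1$; your $S^0$ works equally well if you think stably.) Fix the cofiber computations and your argument is complete and identical to the paper's.
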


 As a special case of this theorem, if $A\subset X$ are closed smooth manifolds and $f\colon X\to X$ is a continuous map so that $f(A)\subset A$ then 
\[L(f)-L(f|A)=L(f/A)\] since the mapping cone on the inclusion is equivalent to the quotient space $X/A$.
\begin{proof}  Let $B$ be the the category with two objects $a$ and $b$ and one nonidentity morphism $\alpha\colon a \to b$.  
  In \autoref{eg:smcofibers}, we saw that the functor $\Phi\colon B\op\to \ptop$ defined by $\Phi(b)=S^0$ and $\Phi(a)=\pt$ is absolute, and that $\Phi$-weighted colimits are mapping cones.

  Now $B$ has two conjugacy classes, $[\id_a]$ and $[\id_b]$, so $\pi_0^s\left(\sh{B}_B\right) \cong \mathbb{Z}\oplus \mathbb{Z}$.
  Thus, $\tr(\id_\Phi)$ is determined by two integers $\phi_a$ and $\phi_b$, and for any pointwise dualizable profunctor $M\colon B\to \ptop$ and 
  endomorphism $f\colon M\to M$ we have a linearity formula
  \[ \tr(\colim^\Phi (f)) = \phi_a \cdot \tr(f_a) + \phi_b \cdot \tr(f_b).\]
  As in \autoref{eg:lefschetz-coproducts}, now that we know that $\phi_a$ and $\phi_b$ exist, we can calculate them by considering some very special cases.
  On one hand, if $M$ is the diagram  $(S^n\to S^n)$, then its cofiber is $\pt$.  If $f$ is the identity, we have
  \[ 0 = \phi_a \cdot 1 + \phi_b \cdot 1.\]
  On the other hand, if $M$ is the diagram $(\pt\to S^n)$, then its cofiber is $S^n$.  If $f$ is again the identity, we have
  \[ 1 = \phi_a \cdot 0 + \phi_b \cdot 1.\]
  Solving these equations, we obtain $\phi_b = 1$ and $\phi_a = -1$.

  Thus, \autoref{thm:lin} tells us that if $M\colon B\to \ptop$ is pointwise dualizable and $f\colon M\to M$, then
  \[ \tr(\colim(f)) = \tr(f_b) - \tr(f_a). \]
  If we let $M_a=Y$, $M_b=X$, $f_a=g$ and $f_b=f$ then we recover the statement of the theorem.
\end{proof}

It is important to note that this approach is not tied to this particular diagram shape.  The same proof can be applied to 
any small category $B$ and functor $\Phi\colon B\op\to \ptop$ 
\emph{as long as we can show that the  functor $\Phi$ is 
absolute}.   This is a significant restriction, since verifying absoluteness can be difficult, but it also does allow
for significant generalizations; a number of examples can be found in \cite{PS5}.

The approach is also not tied to the particular context of ordinary spaces: it works in any ``homotopy theory''.
In~\cite{PS5} we describe it in the general context of \emph{derivators}; the proof of \autoref{Lefschetz} then applies in any \emph{stable} derivator, i.e.\ one where the suspension functor is an equivalence.
(To put based spaces in such a context, one works instead with spectra.)
A similar general form of \autoref{Lefschetz} was proven in~\cite{add} using triangulated categories and stable model categories.
By~\cite{groth:ptstab}, any stable derivator has an underlying triangulated category, and any stable model category gives rise to a stable derivator; thus our general form of \autoref{Lefschetz} has roughly the same generality as that of~\cite{add}.
However, as remarked above, our method also applies to more general diagram shapes.
And, as we now proceed to explain, it also applies to the Reidemeister trace.

\section{Parametrized spaces and the Reidemeister trace}
\label{sec:fibration}

We now introduce the context of parametrized spaces and ex-fibrations following~\cite{maysig:pht}, which will enable us to view the Reidemeister trace as a trace in a way similar to
the perspective on the Lefschetz number described above.

For us, \emph{fibration} will always mean \emph{Hurewicz fibration}.
If $p_1\colon E_1 \to X$ and $p_2\colon E_2 \to X$ are fibrations, a \textbf{fiberwise map} is a map $f\colon E_1\to E_2$ such that $p_2 f = p_1$.
Similarly, a \textbf{fiberwise homotopy} is a map $H\colon E_1\times [0,1]\to E_2$ such that $p_2(H(e,t)) = p_1(e)$ for all $e\in E$ and $t\in [0,1]$.
We denote fiberwise homotopy equivalences by $\simeq$.

If, on the other hand, we have fibrations $p_1\colon E_1 \to X_1$ and $p_2\colon E_2 \to X_2$ over (possibly) different base spaces, and $\fbar\colon X_1 \to X_2$ is a map, then by a 
\textbf{(fiberwise) map over \fbar} we mean a map $f\colon E_1\to E_2$ such that $p_2 f = \fbar p_1$.
Thus, a ``fiberwise map'' without qualification is always over the identity $\id_X$.

Two basic constructions on fibrations are \emph{pullback} and \emph{pushforward} along a continuous map $\fn\colon Y\to X$.
Firstly, given a fibration $p\colon E\to X$, we have a pullback fibration $\fn^*E \to Y$.
For any other fibration $E'\to Y$, there is a natural bijection between maps $E'\to E$ over $\fn$ and fiberwise maps $E'\to \fn^*E$ (over $\id_Y$).
Secondly, given a fibration $q\colon E\to Y$, the composite $E\xto{q} Y \xto{\fn} X$ is not in general a fibration (though it is if $\fn$ is also a fibration), but up to homotopy we can replace it by one.
We denote the result by $\fn_! E \to X$.

A particularly important construction built out of these operations is the following.
Given fibrations $M \to Y\times X$ and $N\to X\times Z$, we can form the product fibration $M\times N \to Y\times X\times X\times Z$, pull back along the diagonal $Y\times X\times Z \to Y\times X\times X\times Z$, then compose with the projection $Y\times X\times Z \to Y\times Z$ (which is a fibration).
This yields a fibration $M\times_{X} N \to Y\times Z$ whose total space is the pullback of $M$ and $N$ over $X$.
(We could have defined this more directly, but the description given above generalizes more easily.)

The fibrational version of a based space is a \emph{sectioned} fibration, i.e.\ a fibration $p\colon E\to X$ equipped with a continuous section $s\colon X\to E$ (so that $p s = \id_X$).
We usually assume that the section is a fiberwise closed cofibration, in which case we call $p$ an \textbf{ex-fibration} (see~\cite{maysig:pht} for details).
We say that a fiberwise map 
between ex-fibrations is an \textbf{ex-map} if it also commutes with the specified sections.
Note that if $p\colon E\to X$ is an ex-fibration, then each fiber $\fib{p}{b}$ is a nondegenerately based space.

If $E\to X$ is any fibration, then $E\sqcup X \to X$ is an ex-fibration, called the result of \emph{adjoining a disjoint section} to $E$;
we denote it by $E_{+X}$.
Additionally, for any ex-fibration $E\to X$ and any (nondegenerately) based space $W$, we have an induced ex-fibration $W\sm E \to X$, whose fiber over $b\in X$ is the ordinary smash product $W \sm \fib{p}{b}$.
Ex-fibrations can be pulled back along a continuous map $\fn\colon Y\to X$ in a straightforward way.
They can also be pushed forwards, by composing and then pushing out along the section (then replacing by a fibration, if necessary).

Two ex-fibrations $p\colon M\to Y$ and $q\colon N\to X$ have an \textbf{external smash product} $M\exsm N \to Y\times X$, whose fiber over $(a,b)$ is $\fib{p}{a}\sm \fib{q}{b}$.
If we have 
 ex-fibrations $p\colon M\to Y\times X$ and $q\colon N\to X\times Z$ the  \textbf{smash pullback} of $M$ and $N$ is the result of pulling back $M\exsm N$ along $Y\times X\times Z \to Y\times X\times X\times Z$, then pushing forward to $Y\times Z$.
This yields an ex-fibration $M\odot N \to Y\times Z$; its fiber over $(a,c)$ consists of all the smash products $\fib{p}{a,b} \sm \fib{q}{b,c}$ for all $b\in X$, with their basepoints identified (and with a suitable quotient topology which relates these products for different $b$).
The smash pullback is associative.

\begin{rmk}\label{rmk:ordering-convention}
  If we are given a fibration $M\to X$, we may regard it either as a fibration $M\to X\times \pt$ or as a fibration $M\to \pt\times X$.
  We will only need the second and we will denote it by $\widecheck{M}$.

  Because we will have another use for subscripts on $\odot$ in \S\ref{sec:parametrize}, we will not indicate in the notation the space that is ``canceled'' in the smash pullback.
  Instead we will rely on the understanding that $M\odot N$ is a space over the left-hand space of $M$ and the right-hand space of $N$, as above.
\end{rmk}

For any space $X$, let $P X$ denote the space of paths $\gamma \colon [0,1]\to X$, and define the fibration $P X \to X\times X$ by evaluation at the endpoints, $\gamma \mapsto (\gamma(0),\gamma(1))$.
Then  $(P X)_{+X\times X}$ is a  two-sided homotopy unit for the smash pullback; we denote these units by $U_X \to X\times X$.
For any (nondegenerately) based space $W$ and ex-fibration $p\colon M\to Y\times X$ we have natural equivalences 
\[(W\wedge U_Y)\odot M\simeq W\wedge M \qquad M\odot (W\wedge U_X)\simeq W\wedge M\] 

Given a map $\fn\colon Y\to X$ of base spaces, we write $X_\fn$ for the 
ex-fibration $((\id\times \fn)^* P X)_{+(X\times Y)}$ and ${}_\fn X$ for the ex fibration $((\fn\times \id)^* P X )_{+(Y\times X)}$.
The ex-fibrations $X_\fn$ and ${}_\fn X$ are called \textbf{base change objects}, and have the following important properties:
\begin{itemize}
\item For any ex-fibration $M\to Z\times X$, we have $M\odot X_\fn \simeq (\id\times \fn)^* M$.
\item For any ex-fibration $M\to X\times Z$, we have ${}_\fn X \odot M \simeq (\fn\times \id)^* M$.
\item For any ex-fibration $M\to Z\times Y$, we have $M\odot {}_\fn X \simeq (\id\times \fn)_! M$.
\item For any ex-fibration $M\to Y\times Z$, we have $X_\fn \odot M \simeq (\fn\times \id)_! M$.
\end{itemize}

If $M\to Y\times Y$ is an ex-fibration, we denote by $\sh{M}$ its \textbf{shadow}, which is the based space defined by pulling back along the diagonal $Y\to Y\times Y$, then quotienting out the section (i.e.\ pushing forward along the unique map $Y\to \pt$).
For ex-fibrations $M \to Y\times X$ and $N\to X\times Y$, we have a canonical equivalence
\[ \sh{M \odot N} \;\simeq\; \sh{N\odot M}. \]
For a based space $X$ and an ex-fibration $M\to Y\times Y$, we have $\sh{X\sm M} \simeq X \sm\; \sh{M}$.

The shadow of the unit ex-fibration $U_Y$ is $(\Lambda Y) _+$, the free loop space of $Y$ with a disjoint basepoint.
Similarly, for an endomorphism $\fn\colon Y\to Y$, the shadow of the base change object $Y_\fn$ is $(\Lambda^\fn Y)_+$, where $\Lambda^\fn Y$ denotes the \textbf{twisted free loop space}: its points are pairs consisting of a point $a\in Y$ and a path $a\leadsto \fn(a)$.
Likewise, the shadow of ${}_{\fn} Y$ is the space of paths $\fn(a)\leadsto a$ (with a disjoint basepoint), which is homeomorphic to $(\Lambda^\fn Y)_+ \cong \sh{Y_\fn}$.

We say that an ex-fibration $M\to Y\times X$ is \textbf{(right) $n$-dualizable} if there is an ex-fibration $\rdual{M}\to X\times Y$ and fiberwise ex-maps
\begin{align*}
  \eta &\colon S^n \sm U_Y \too M \odot \rdual{M}\\
  \ep &\colon \rdual{M} \odot M \too S^n \sm U_X
\end{align*}
such that the composites
\[\xymatrix@R=14pt{
  S^n \sm M\ar[d]^-\wr&& S^n\sm M \\
  S^n\wedge U_Y\odot M\ar[r]^-{\eta\odot \id} &M\odot \rdual{M}\odot M \ar[r]^-{\id \odot \ep} & M\odot S^n\wedge U_X \ar[u]^-\wr\\
  \rdual{M}\odot S^n\wedge U_Y \ar[r]^-{\id\odot \eta}& \rdual{M}\odot M\odot \rdual{M} \ar[r]^-{ \ep\odot \id} &S^n\wedge U_X\odot \rdual{M} \ar[d]_-\wr\\
  S^n\sm \rdual{M}\ar[u]_-\wr&&S^n \sm \rdual{M}
}\]
become fiberwise homotopy equivalent to identity maps after smashing with some $S^m$.
We say that an unsectioned fibration $M\to Y\times X$ is $n$-dualizable if $M_{+(Y\times X)}$ is so.
If $Y=X=\pt$, this reduces to the original notion of $n$-duality from \S\ref{sec:lefschetz-number}.

The following theorem is proven exactly like \autoref{thm:smcompose-dual} (in fact, they are both special cases of a single theorem about bicategorical duality).

\begin{thm}[{\cite[Theorem~16.5.1]{maysig:pht}}]\label{thm:compose-duality}
  If $M\to Y\times X$ and $N\to X\times Z$ are right $n$-dualizable ex-fibrations, then so is $M\odot N \to Y\times Z$.
\end{thm}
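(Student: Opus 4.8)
The plan is to mimic exactly the proof of \autoref{thm:smcompose-dual}, which was noted to be ``quite trivial'': the dual of $M\odot N$ should be $\rdual N\odot \rdual M$, and everything else is diagram chases using the formal properties (associativity of $\odot$, the unit equivalences $W\sm U_Y\odot M\simeq W\sm M$, etc.) established in this section. Concretely, first I would write down the candidate coevaluation as the composite
\[
S^n\sm U_Y \xto{\eta_M} M\odot \rdual M \simeq M\odot S^n\sm U_X\odot \rdual M \xto{\id\odot \eta_N\odot\id} M\odot N\odot \rdual N\odot \rdual M,
\]
where we have inserted the equivalence $\rdual M\simeq S^n\sm U_X\odot\rdual M$ (so strictly one should smash with an extra $S^n$ and absorb it, or work up to the stable equivalences built into the definition of $n$-duality; I would be slightly careful about the bookkeeping of the sphere dimension here, exactly as in the corresponding bicategorical-algebra computation). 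Dually, the evaluation is
\[
(\rdual N\odot \rdual M)\odot (M\odot N) \simeq \rdual N\odot(\rdual M\odot M)\odot N \xto{\id\odot \ep_M\odot\id} \rdual N\odot S^n\sm U_X\odot N \simeq \rdual N\odot N\xto{\ep_N} S^n\sm U_Z.
\]

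Second, I would verify the two triangle identities. This is the part that is nominally the ``main obstacle,'' but it is a formal manipulation: substituting the above composites into the two triangle diagrams and using associativity and the unit coherences, each reduces — after cancelling the inner $M$–$\rdual M$ zig-zag using the triangle identity for the duality $(M,\rdual M)$ — to the triangle identity for $(N,\rdual N)$, and conversely. The only genuine subtlety is that all of this holds only \emph{after smashing with some $S^m$} (stable homotopy), so I would phrase the argument throughout in terms of fiberwise maps up to stable fiberwise homotopy equivalence, choosing a single $S^m$ large enough to witness both the $M$-duality and the $N$-duality identities simultaneously, so that the composed identities also hold after that same suspension.

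Finally — and this is the one genuinely new point compared to \autoref{thm:smcompose-dual} — I should check that the constructed maps $\eta$ and $\ep$ are honest \emph{fiberwise ex-maps} of ex-fibrations (commuting with sections and covering the identity on the appropriate product base), and that $\rdual N\odot\rdual M$ is again an ex-fibration over $Z\times Y$. Both follow because $\odot$, pullback, pushforward, smashing with a based space, and the base-change/unit equivalences all preserve ex-fibrations and fiberwise ex-maps, so no separate argument is needed; I would just remark on this. (Alternatively, as the paper suggests, one could deduce the whole statement from a single abstract theorem about duality in a bicategory with shadows, applied to the bicategory of ex-fibrations; but since the direct chase is short I would present it directly and point to \cite{maysig:pht} and \cite{kate:traces,PS2} for the bicategorical formulation.) The expected main obstacle is therefore not conceptual but purely the careful tracking of the suspension coordinates and the insertion of unit equivalences so that the two triangle identities come out on the nose.
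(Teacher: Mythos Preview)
Your proposal is correct and is exactly the approach the paper takes: it states explicitly that this theorem ``is proven exactly like \autoref{thm:smcompose-dual} (in fact, they are both special cases of a single theorem about bicategorical duality),'' with no further argument given. Your explicit description of the coevaluation, evaluation, and reduction of the triangle identities (together with your care about the sphere bookkeeping and the ex-fibration/ex-map preservation) is precisely the diagram chase the paper alludes to.
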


The proof of the following theorem is also formal, when put in the right context.

\begin{thm}[{\cite[Theorem~17.3.1]{maysig:pht},~\cite[Prop.~5.3]{shulman:frbi}}]\label{thm:bco-dual}
  For any $\fn\colon Y\to X$, the base change object ${}_{\fn} X$ is right $n$-dualizable with dual $X_{\fn}$.
\end{thm}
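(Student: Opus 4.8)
The plan is to take the right dual of ${}_{\fn}X$ to be the base-change object $X_{\fn}$, exactly as asserted, to build the coevaluation and evaluation out of the two elementary operations ``apply $\fn$ to a path'' and ``concatenate paths'', and then to reduce the triangle identities to the triangle identities of the ordinary base-change adjunctions $(\id\times\fn)_{!}\dashv(\id\times\fn)^{*}$ and $(\fn\times\id)_{!}\dashv(\fn\times\id)^{*}$. No sphere factor intervenes, so we are really constructing the duality data with $n=0$; this is the form in which base-change objects get used, and everything below smashes through $S^{n}$ trivially if a graded statement is wanted.

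First I would identify the two composites that must carry the data. Applying the property ${}_{\fn}X\odot M\simeq(\fn\times\id)^{*}M$ with $M=X_{\fn}$ identifies ${}_{\fn}X\odot X_{\fn}$ with $((\fn\times\fn)^{*}PX)_{+(Y\times Y)}$, the ex-fibration over $Y\times Y$ whose fiber over $(y,y')$ is the based space of paths $\fn(y)\leadsto\fn(y')$ in $X$. Dually, applying $M\odot{}_{\fn}X\simeq(\id\times\fn)_{!}M$ with $M=X_{\fn}$ identifies $X_{\fn}\odot{}_{\fn}X$ with $(\id\times\fn)_{!}(\id\times\fn)^{*}PX$ over $X\times X$, equipped with its adjoined section; concretely a point over $(x,x')$ is represented by a path $x\leadsto\fn(y)$ and a path $\fn(y)\leadsto x'$ for some $y\in Y$. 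Under the first identification the coevaluation $\eta\colon U_{Y}\too{}_{\fn}X\odot X_{\fn}$ is the ex-map induced by $\fn$ on path spaces, $\gamma\mapsto\fn\circ\gamma$ (equivalently, before the identification, ``apply $\fn$ to a path and cut it at its midpoint''). Under the second identification, the evaluation $\ep\colon X_{\fn}\odot{}_{\fn}X\too U_{X}$ is the counit $(\id\times\fn)_{!}(\id\times\fn)^{*}PX\to PX$, that is, ``concatenate the two paths through $\fn(y)$ and forget $y$''. One then checks, routinely, that both are fiberwise ex-maps over $Y\times Y$ and $X\times X$ respectively.

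The remaining content is the triangle identities: the round trip from ${}_{\fn}X$ through $\eta\odot\id$ and then $\id\odot\ep$, inserted between the unit isomorphisms $S^{0}\sm{}_{\fn}X\simeq U_{Y}\odot{}_{\fn}X$ and ${}_{\fn}X\odot U_{X}\simeq{}_{\fn}X\sm S^{0}$, must be fiberwise homotopic to the identity on ${}_{\fn}X$, and similarly for the round trip on $X_{\fn}$. Unpacked directly, tracing a path $\fn(y)\leadsto x$ through this composite yields a reparametrized concatenation of two halves of $\fn$ of that path with itself, which agrees with the original path up to a homotopy of reparametrizations of $[0,1]$; a linear homotopy through reparametrizations supplies the required fiberwise homotopy, and one verifies it fixes the sections. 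I would prefer, though, to argue formally: the identifications above are natural and compatible with $\odot$ (associativity together with the coherence isomorphisms for $(-)^{*}$ and $(-)_{!}$), and under them the two round trips become precisely the two composites appearing in the triangle identities for $(\id\times\fn)_{!}\dashv(\id\times\fn)^{*}$ and for $(\fn\times\id)_{!}\dashv(\fn\times\id)^{*}$, which hold strictly.

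The main obstacle is therefore not homotopy-theoretic but a matter of coherence bookkeeping: one must verify that the equivalences $M\odot X_{\fn}\simeq(\id\times\fn)^{*}M$, $M\odot{}_{\fn}X\simeq(\id\times\fn)_{!}M$, and their variants fit together compatibly enough that the duality round trips genuinely translate into adjunction triangle identities --- in other words, that the ``right context'' really does make the argument formal. This is exactly the compatibility packaged by the cited results \cite[Theorem~17.3.1]{maysig:pht} and \cite[Prop.~5.3]{shulman:frbi}; the explicit reparametrization homotopies sketched above are the concrete fallback if one prefers to bypass that machinery.
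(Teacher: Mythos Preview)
Your proposal is correct and matches the paper's intended argument. The paper does not actually write out a proof: it simply remarks that ``the proof of the following theorem is also formal, when put in the right context'' and defers to the cited references. Your reduction of the triangle identities to those of the adjunctions $(\id\times\fn)_{!}\dashv(\id\times\fn)^{*}$ and $(\fn\times\id)_{!}\dashv(\fn\times\id)^{*}$ is precisely that formal argument, and your acknowledgment that the coherence bookkeeping is what the citations package is exactly right.
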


The next theorem does require a significant amount of work, but fortunately it has already been done for us by~\cite{maysig:pht}.

\begin{thm}[{\cite[Theorem~18.5.2]{maysig:pht}}]\label{thm:mfd-cwdual}
  Suppose $M$ is a closed smooth manifold and there is an embedding of $M$ in $\mathbb{R}^n$. For any fibration $M\to X$ 
  the ex-fibration $\widecheck{M_{+X}}$ is right $n$-dualizable.
  In particular, this applies to the identity fibration $M\to M$.
\end{thm}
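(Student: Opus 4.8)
The plan is to reduce the case of an arbitrary fibration $p\colon M\to X$ to that of the identity fibration, and then to treat the identity fibration by a parametrized version of the Atiyah duality of \S\ref{sec:lefschetz-number}. For the reduction, apply the base change identity $N\odot{}_p X\simeq(\id\times p)_! N$ (a property of base change objects recorded above) to the ex-fibration $N=\widecheck{M_{+M}}$ over $\pt\times M$ determined by the identity fibration $M\to M$. Since the pushforward $(\id\times p)_!\widecheck{M_{+M}}$ has total space $M\sqcup X$ over $X$ and hence fiber $(\fib{p}{x})_+$ over $(\pt,x)$, a short check identifies it with $\widecheck{M_{+X}}$, so that
\[\widecheck{M_{+X}}\;\simeq\;\widecheck{M_{+M}}\odot{}_p X.\]
By \autoref{thm:bco-dual} the base change object ${}_p X$ is right $n$-dualizable, so \autoref{thm:compose-duality} reduces us to showing that $\widecheck{M_{+M}}$ is right $n$-dualizable; this is Costenoble--Waner duality for $M$, and it also yields the last sentence of the statement.

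For the identity fibration, the plan is to imitate \S\ref{sec:lefschetz-number} fiberwise. Fix an embedding $M\hookrightarrow\mathbb{R}^n$ with normal bundle $\nu$, so that $TM\oplus\nu$ is trivial of rank $n$, and take as the proposed dual of $\widecheck{M_{+M}}$, up to the reindexing of \autoref{rmk:ordering-convention}, the fiberwise one-point compactification $S^\nu\to M$ of $\nu$. Under the natural identifications $U_\pt=S^0$ and $\widecheck{M_{+M}}\odot S^\nu\simeq M^\nu$ (the Thom space), the coevaluation $\eta\colon S^n\sm U_\pt\to\widecheck{M_{+M}}\odot S^\nu$ is to be the Pontryagin--Thom collapse associated to a tubular neighborhood of $M$ in $\mathbb{R}^n$, and the evaluation $\ep\colon S^\nu\odot\widecheck{M_{+M}}\to S^n\sm U_M$ is to be assembled, just as in Atiyah's argument, from the diagonal $M\to M\times M$, the trivialization of $TM\oplus\nu$, and the exponential map of the tubular neighborhood.

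The main obstacle is the remaining verification: that with these choices the two composites in the definition of right $n$-dualizability become fiberwise stably homotopic to identity maps. The requisite homotopies are the classical ones, but carrying the argument out honestly for ex-fibrations --- keeping track of the sections, of fiberwise cofibrancy, and of the (co)fibrant replacements hidden inside $\odot$ and $(-)_!$ --- is exactly the substantial work that ``has already been done for us'' in \cite{maysig:pht}.
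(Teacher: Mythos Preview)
The paper does not give its own proof of this statement: it is stated with a citation to \cite[Theorem~18.5.2]{maysig:pht}, preceded by the remark that ``it has already been done for us by~\cite{maysig:pht}.'' So there is no paper proof to compare against beyond the reference itself.

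Your sketch is a correct outline. The reduction of the general fibration $p\colon M\to X$ to the identity fibration via the factorization
\[
\widecheck{M_{+X}}\;\simeq\;(\id\times p)_!\,\widecheck{M_{+M}}\;\simeq\;\widecheck{M_{+M}}\odot{}_p X
\]
together with \autoref{thm:bco-dual} and \autoref{thm:compose-duality} is valid, and in fact this same factorization is used explicitly later in the paper (in the proof of \autoref{thm:addreidemeister}, where $i_!(\widecheck{S_Y})\cong\widecheck{S_Y}\odot{}_i X$ is shown to be right dualizable by exactly this argument). For the identity fibration, what you describe is precisely Costenoble--Waner duality for a closed smooth manifold, and the ingredients you list (Pontryagin--Thom collapse for $\eta$, the diagonal plus the trivialization $TM\oplus\nu\cong\underline{\mathbb{R}^n}$ for $\ep$) are the standard ones used in~\cite{maysig:pht}. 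Your final paragraph correctly identifies where the genuine work lies.
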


We write $S_M \to M$ for the ex-fibration $M_{+M} \simeq M\sqcup M \simeq S^0 \times M$ over $M$.
According to Remark \ref{rmk:ordering-convention}, we write $\widecheck{S_M}$ for $S_M$ regarded as a fibration over $\pt\times M$.
This theorem implies $\widecheck{S_M}$ is dualizable if $M$ is a closed smooth manifold.

If $M\to Y\times X$ is $n$-dualizable and we have ex-fibrations $Q\to Y\times Y$ and $P\to X\times X$ and a fiberwise ex-map $f\colon Q\odot M \to M\odot P$ (over $Y\times X$)
we define the \textbf{trace} of $f$ to be the composite
\[\xymatrix{
  \sh{S^n\sm Q } \ar[d]^-{\eta} & & \sh{S^n \sm P}
  \\
  \sh{Q \odot M \odot \rdual{M}} \ar[r]^-{f \odot \id} &
  \sh{M \odot P \odot \rdual{M}} \ar[r]^{\simeq} &
  \sh{\rdual{M}\odot M \odot P} \ar[u]_-{\ep}
}\]

\begin{rmk}
This generalizes the trace in \S\ref{sec:lefschetz-number} by replacing spaces by ex-fibrations, and also by allowing for  \emph{twisting} of both the domain 
and codomain.  The codomain twisting is used in the description of the Reidemeister trace, while the domain twisting will 
play a role in the proof of its additivity in \autoref{thm:addreidemeister}.
\end{rmk}

We will use several instances of this trace in the next sections.
The first example we will need is the Reidemeister trace.

\begin{thm}[\cite{kate:higher}]\label{thm:reidemeister_id}
  Suppose that $\widecheck{S_X}$ is right $n$-dualizable, and let $\fn\colon X\to X$ be any continuous map.
  Then the trace of the induced map $\widecheck{S_X} \to \widecheck{S_X}\odot_X X_{\fn}$ induces a map on homology:
  \[ \bbZ \cong H_0(\Lambda \star) \to H_0(\Lambda^{\fn}X) \]
  that picks out the Reidemeister trace $R(\fn)$.
\end{thm}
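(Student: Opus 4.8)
The plan is to reduce \autoref{thm:reidemeister_id} to an explicit model for the self-duality of $\widecheck{S_X}$ and then recognize the resulting stable map as the classical Reidemeister trace, essentially following \cite{kate:higher}. First I would fix the duality data concretely: by (the proof of) \autoref{thm:mfd-cwdual}, an embedding $X\hookrightarrow\mathbb{R}^n$ presents a dual $\rdual{\widecheck{S_X}}\to X\times\star$ as the fiberwise one-point compactification of the normal bundle, with a Thom-collapse coevaluation $\eta\colon S^n\to\widecheck{S_X}\odot\rdual{\widecheck{S_X}}$ (a map of spaces over $\star\times\star$, since $U_\star\simeq S^0$) and an evaluation $\ep\colon\rdual{\widecheck{S_X}}\odot\widecheck{S_X}\to S^n\sm U_X$ built from the diagonal and a tubular neighbourhood, collapsing to the basepoint away from the fiberwise diagonal.

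Next I would identify the map $f\colon\widecheck{S_X}\to\widecheck{S_X}\odot_X X_\fn$: under the equivalence $\widecheck{S_X}\odot X_\fn\simeq(\id\times\fn)^*\widecheck{S_X}$ it is just the fiberwise $S^0$-bundle map covering $\fn$ on the base, so after rewriting $\widecheck{S_X}\odot X_\fn$ as an honest ex-fibration its non-basepoint fiber point over $b$ is sent to $(\fn(b),c_{\fn(b)})$ with $c_{\fn(b)}$ the constant path at $\fn(b)$; equivalently $f$ is assembled from the graph of $\fn$ and the section of $\widecheck{S_X}$. Granting this, I would write out the composite defining $\tr(f)$, use $\sh{S^n\sm U_\star}\simeq S^n$ and $\sh{S^n\sm X_\fn}\simeq S^n\sm(\Lambda^\fn X)_+$, and obtain an explicit map $S^n\to S^n\sm(\Lambda^\fn X)_+$. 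Applying reduced homology and the Hurewicz theorem exactly as in \S\ref{sec:lefschetz} then turns $\tr(f)$ into a homomorphism $\bbZ\cong H_0(\Lambda\star)\to H_0(\Lambda^\fn X)\cong\bbZ\pi_0(\Lambda^\fn X)$, and the invariant in question is the image of the canonical generator.

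The substantive step is to show that this image is $R(\fn)$. Since $\eta$ is a Thom collapse and $\ep$ is the basepoint map off the fiberwise diagonal, chasing the composite localizes it near the fixed-point set of $\fn$; on the path component of $\Lambda^\fn X$ indexed by a given Nielsen class, the local contribution should come out to the fixed-point index of $\fn$ on a neighbourhood of that class by a fiberwise Lefschetz--Hopf computation, and summing over classes reproduces the classical index-weighted sum defining $R(\fn)$. Alternatively I would verify the characterizing properties of the Reidemeister trace --- homotopy invariance, immediate from that of all the constructions; recovery of the Lefschetz number $L(\fn)$ upon pushing the whole picture forward to $\star$, which replaces $\widecheck{S_X}$ by $X_+$, collapses the twist, and returns the trace of \S\ref{sec:lefschetz-number} (cf.\ \cite{dp:duality}); and additivity over Nielsen classes --- and then invoke uniqueness.

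I expect this last comparison to be the one real obstacle: everything before it is formal bookkeeping with the duality data, whereas matching the abstract bicategorical trace with the classical $R(\fn)$ requires honest fixed-point theory, in particular tracking the Reidemeister-class (path-component) decoration through the Thom-space manipulations and fixing orientation and sign conventions. This is precisely the content of \cite{kate:higher}, so in practice I would recall that identification rather than redo it here.
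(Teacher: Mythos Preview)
Your proposal is a reasonable outline of how one would actually establish the identification, but there is nothing to compare it against: the paper does not prove \autoref{thm:reidemeister_id}. Immediately after stating it, the authors write that ``for purposes of this paper, the reader is free to consider this theorem a definition of the Reidemeister trace,'' and the result is simply attributed to \cite{kate:higher}. So the paper's ``proof'' is a citation, and your sketch goes well beyond what appears here.

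That said, your outline is essentially the right shape for the argument in the cited reference: fix an explicit Costenoble--Waner dual via an embedding and Thom collapse, unwind the bicategorical trace into a stable map $S^n\to S^n\sm(\Lambda^\fn X)_+$, and then identify the resulting element of $H_0(\Lambda^\fn X)$ with the classical index-weighted sum over Reidemeister classes. You are also right that the genuinely hard step is the last one, and that everything before it is bookkeeping with the duality data. Your own final paragraph already says you would ``recall that identification rather than redo it here,'' which is exactly what the paper does.
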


For purposes of this paper, the reader is free to consider this theorem a definition of the Reidemeister trace.

We also have a version of \autoref{thm:dercomposite} in this context.

\begin{thm}[{\cite[Prop.~7.5]{PS2}}]\label{thm:compose-traces2}
  Let $M\to Y\times X$ and $N\to X\times Z$ be $n$-dualizable, let $Q\to Y\times Y$, $P\to X\times X$, and $R\to Z\times Z$ be ex-fibrations, and let $f\colon Q\odot M\to M\odot P$ and $g\colon P\odot N\to N\odot R$ be fiberwise ex-maps.
  Then the following triangle commutes up to stable homotopy.
  \[ \xymatrix{ S^n \sm \sh{Q} \ar[rr]^{\tr((\id_M \odot g)\circ (f\odot\id_N))} \ar[dr]_{\tr(f)} & &
    S^n \sm \sh{R} \\
    & S^n \sm \sh{P} \ar[ur]_{\tr(g)}
    }\]
\end{thm}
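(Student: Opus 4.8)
The plan is to deduce \autoref{thm:compose-traces2} from the abstract composition formula for traces in a bicategory with shadows, \cite[Prop.~7.5]{PS2}, exactly as \autoref{thm:dercomposite} was obtained. The preceding pages have assembled the structure this requires: ex-fibrations over products $Y\times X$ are the $1$-cells of a (homotopy) bicategory whose composition is the smash pullback $\odot$, whose units are the path ex-fibrations $U_X$, and which carries a shadow $\sh{-}$ with the cyclicity equivalence $\sh{M\odot N}\simeq\sh{N\odot M}$; moreover $S^n\sm(-)$ acts compatibly on $\odot$, on the units, and on $\sh{-}$. So the first step is to record that this data satisfies the axioms of a bicategory with shadows in the sense of \cite{kate:traces,PS2} --- associativity and unit coherence for $\odot$, naturality and the coherence diagrams for the cyclicity isomorphisms, and compatibility of all of this with the functors $S^n\sm(-)$. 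Almost everything needed here is due to \cite{maysig:pht} and is collected in \S\ref{sec:fibration}.

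Granting this, the theorem is the \emph{formal} assertion that $\tr$ is ``functorial'' along horizontal composition, and the proof is a diagram chase. Concretely I would proceed as follows. By (the proof of) \autoref{thm:compose-duality}, the $1$-cell $M\odot N$ is right $n$-dualizable with dual $\rdual N\odot\rdual M$, its coevaluation and evaluation being the evident composites built from $\eta_M,\ep_M,\eta_N,\ep_N$ together with the unit equivalences $N\simeq (S^n\sm U_X)\odot N$ and the like. Substituting these into the definition of $\tr\bigl((\id_M\odot g)\circ(f\odot\id_N)\bigr)$ yields one large diagram of shadows. Expanding $(\id_M\odot g)\circ(f\odot\id_N)$ and sliding $f$ and $g$ past the tensor factors they do not touch, using functoriality of $\odot$ and naturality of the cyclicity isomorphism, produces a configuration in which an adjacent pair $\rdual M\odot M$ --- contributed by $\eta_M$ on one side and $\ep_M$ on the other --- can be cancelled by the triangle identity for $M$; this holds only after smashing with some $S^m$, which is the source of the phrase ``up to stable homotopy''. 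What survives, once reorganized with the associativity and cyclicity coherences, is precisely the trace diagram for $g\colon P\odot N\to N\odot R$ pasted onto the trace diagram for $f\colon Q\odot M\to M\odot P$ along the common vertex $S^n\sm\sh{P}$, which is to say
\[ \tr\bigl((\id_M\odot g)\circ(f\odot\id_N)\bigr)\;\simeq\;\tr(g)\circ\tr(f). \]

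I expect the main obstacle to be not the combinatorics of the chase --- which is genuinely formal once the ambient structure is in hand --- but the verification that ex-fibrations honestly carry that structure. Since $\odot$ is associative and unital only up to coherent fiberwise homotopy equivalence, and since the pushforwards buried inside $\odot$ and $\sh{-}$ force one to replace maps by fibrations, the ``bicategory'' is really a \emph{homotopy} bicategory and the triangle identities hold only stably; keeping the coherence isomorphisms and the stabilizing sphere $S^m$ under control throughout the chase is the delicate point. This is exactly the bookkeeping that \cite{PS5} offloads onto the derivator formalism. I would therefore either (i) check the hypotheses of \cite[Prop.~7.5]{PS2} and invoke it directly, or (ii) run the chase in the strictified derivator bicategory of \autoref{part:derivators}, where no replacements are needed, and transport the conclusion back.
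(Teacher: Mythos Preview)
Your proposal is correct and matches the paper's approach: the paper gives no proof of \autoref{thm:compose-traces2} beyond the citation to \cite[Prop.~7.5]{PS2}, treating it (like \autoref{thm:dercomposite}) as a direct instance of the abstract composition formula for bicategorical traces once the ex-fibration bicategory with shadow is in hand. Your additional remarks on the homotopy-coherence subtleties and the derivator strictification are exactly the issues the paper defers to Part~\ref{part:derivators} and \cite{PS5}.
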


There is one other general fact about traces that we will need in the following sections.
Suppose that
\[\vcenter{\xymatrix{
    Y\ar[r]^{\fn}\ar[d]_{f_1} \ar@{}[dr]|{\alpha} &
    X\ar[d]^{f_2}\\
    Y\ar[r]_{\fn} &
    X
  }}\]
is a homotopy commutative square, with specified homotopy $\alpha\colon \fn f_1\sim f_2 \fn$.
Then ${}_{\fn} X$ is right $n$-dualizable by Theorem \ref{thm:bco-dual}.  

\begin{thm}\cite[Prop.~3.3, Theorem~5.7]{PS4}\label{thm:square-trace}
  There is an induced map
  \[ h\colon Y_{f_1} \odot {}_{\fn} X \too {}_{\fn}X \odot X_{f_2}. \]
  whose trace is homotopic to the map
  \[ \Lambda^{f_1} Y_+ \to \Lambda^{f_2} X_+ \]
  which sends a path $\gamma\colon a \leadsto f_1(a)$ in $Y$ to the path $\fn(\gamma) \cdot \alpha(a)$.
\end{thm}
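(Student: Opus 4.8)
The plan is to unwind the definition of the trace in the parametrized setting and identify each piece concretely in terms of loop spaces. First I would construct the map $h\colon Y_{f_1}\odot {}_{\fn}X \to {}_{\fn}X\odot X_{f_2}$. Using the base change object identities from \S\ref{sec:fibration}, I know that $Y_{f_1}\odot {}_{\fn}X \simeq (\id\times\fn)_!(f_1\times\id)^*PY$-type composites, and more usefully that $\sh{Y_{f_1}}\simeq (\Lambda^{f_1}Y)_+$ and $\sh{{}_{\fn}X\odot X_{f_2}} \simeq \sh{X_{f_2}\odot {}_{\fn}X}$, which should work out to paths governed by $f_2$ and $\fn$. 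The map $h$ itself should come from the homotopy $\alpha$: on total spaces, $Y_{f_1}\odot{}_{\fn}X$ consists of points $a\in Y$ together with a path $a\leadsto f_1(a)$ and a path $\fn(a)\leadsto (\text{second }X\text{-coordinate})$, and concatenating with $\fn$ applied to the first path and then with $\alpha(a)$ produces a point of ${}_{\fn}X\odot X_{f_2}$. I would write this out as a fiberwise ex-map, checking it respects sections and projections.

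Next I would compute the trace of $h$. By \autoref{thm:bco-dual}, ${}_{\fn}X$ is right $n$-dualizable with dual $X_{\fn}$, and the evaluation and coevaluation are the standard ones (concatenation/constant-path inclusions up to the fibrant-cofibrant bookkeeping). The trace of $h$ is then by definition the composite
\[
\sh{S^n\sm Y_{f_1}} \xto{\eta} \sh{Y_{f_1}\odot {}_{\fn}X\odot X_{\fn}} \xto{h\odot\id} \sh{{}_{\fn}X\odot X_{f_2}\odot X_{\fn}} \xto{\simeq} \sh{X_{\fn}\odot {}_{\fn}X\odot X_{f_2}}\xto{\ep} \sh{S^n\sm X_{f_2}}.
\]
Taking shadows and using the identifications above, this becomes a map $S^n\sm(\Lambda^{f_1}Y)_+ \to S^n\sm(\Lambda^{f_2}X)_+$, and I would trace an element $\gamma\colon a\leadsto f_1(a)$ through each arrow. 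The coevaluation inserts a constant path, $h$ applies $\fn$ and concatenates with $\alpha(a)$, the shadow equivalence performs the cyclic rotation, and the evaluation concatenates the loop back up; the net effect on the underlying path should be exactly $\fn(\gamma)\cdot\alpha(a)$. This is essentially the same bookkeeping computation as in the proof that the trace of the identity base change object picks out the Reidemeister trace (\autoref{thm:reidemeister_id}), just carried out with an extra twist $f_1$ on the source.

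The main obstacle I expect is not the conceptual structure but the careful tracking of concatenation order and the cofibrant/fibrant replacements needed to make $\eta$ and $\ep$ into honest maps rather than spans. In particular, making sure the "cyclic rotation" shadow equivalence $\sh{M\odot N}\simeq\sh{N\odot M}$ is identified correctly — it is responsible for turning the separately-recorded paths into a single concatenated path — requires being precise about which endpoint is glued to which. I would handle this by first doing the untwisted case $f_1=\id$ (where the answer should reduce to a known computation, cf.\ \autoref{thm:reidemeister_id} and \autoref{thm:compose-traces2} with $Q=U_Y$) and then checking that inserting the twist $f_1$ merely pre-composes the loop with $\gamma$ on the nose; since \autoref{thm:square-trace} is already attributed to \cite[Prop.~3.3, Theorem~5.7]{PS4}, I would also be content to cite that source for the verification and present here only the construction of $h$ together with the statement of what its trace computes, deferring the diagram chase to the reference.
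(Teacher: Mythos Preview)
The paper does not prove this theorem at all: it is stated with an attribution to \cite[Prop.~3.3, Theorem~5.7]{PS4} and no argument is given in the text. Your final remark---that you would be content to cite that source for the verification and present only the construction of $h$ together with the statement of what its trace computes---is exactly what the paper does (indeed, the paper does not even spell out the construction of $h$; it only asserts that there is an induced map).

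Your sketch of a direct argument is a reasonable outline of how such a proof would go, and the pieces you identify (the explicit dual pair from \autoref{thm:bco-dual}, the shadow identifications $\sh{Y_{f_1}}\simeq(\Lambda^{f_1}Y)_+$ and $\sh{X_{f_2}}\simeq(\Lambda^{f_2}X)_+$, and the role of the cyclic shadow isomorphism in concatenating paths) are the right ingredients. But since the paper offers no proof to compare against, there is nothing further to check here; your proposal is strictly more detailed than the paper's treatment, and its bottom line (defer to the reference) coincides with it.
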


\section{Parametrized profunctors}  \label{sec:parametrize}

In order to generalize the proof of the additivity of the Lefschetz number to the Reidemeister trace, we will consider 
diagrams of ex-fibrations.  If $A$ and $B$ are small categories and $Y$ and $X$ are topological spaces, 
a \textbf{parametrized profunctor} from $(A,Y)$ to $(B,X)$ is an $(A \times B\op)$-indexed diagram of ex-fibrations over $Y \times  X$.
Given parametrized profunctors $M\colon (A,Y)\to (B,X)$ and $N\colon (B,X)\to (C,Z)$ we define their \textbf{composite}
$M\odot_B N$ to be a parametrized profunctor from $(A,Y)$ to $(C,Z)$ where $(M\odot_B N)(a,c)$ is the homotopy coend of 
the $(B\op\times B)$-indexed diagram
\[(b,b')\mapsto M(a,b)\odot N(b',c).\]

For a (nondegenerately) based topological space $W$ and a parametrized profunctor $M$ from $(A,Y)$ to $(B,X)$ we can define a  parametrized profunctor  $W\wedge M$ 
from $(A,Y)$ to $(B,X)$ by taking $(W\wedge M)(a,b)$ to be the ex-fibration $W\wedge (M(a,b))$ over $Y\times X$ as in the previous section.
In particular, for a small category $A$ and a space $Y$ we can define a parametrized profunctor $U_{(A,Y)}$ from $(A,Y)$ to $(A,Y)$ by $U_{(A,Y)}(a,a')=\hom_A(a,a')_+\sm U_Y$.
We also have natural equivalences 
\[(W\wedge U_{(A,Y)})\odot_A M\simeq W\wedge M \qquad M\odot_B (W\wedge U_{(B,X)})\simeq W\wedge M\] 
for any (nondegenerately) based topological space $W$. 

\begin{rmk}
As in \S\ref{sec:lefschetz}, we use a subscript on $\odot$ to indicate that we are taking a composite of profunctors.
\end{rmk}

We say that a parametrized profunctor $M$ from $(A,Y)$ to $(B,X)$ is \textbf{right $n$-dualizable} if there is a parametrized 
profunctor $\rdual M$ from $(B,X)$ to $(A,Y)$ and natural transformations $\eta\colon S^n\wedge (A,Y) \to M\odot_B\rdual{M}$ 
and $\epsilon\colon\rdual{M}\odot_A M\to S^n\wedge (B,X) $ (possibly with cofibrantly replaced domains) such that the composites
\[\xymatrix@R=14pt{
  S^n \sm M\ar[d]^-\wr&&S^n \sm  M\\
  S^n\wedge U_{(A,Y)}\odot_A M\ar[r]^-{\eta\odot \id} &M\odot_B \rdual{M}\odot_A M \ar[r]^-{\id \odot \ep} & M\odot_B S^n\wedge U_{(B,X)} \ar[u]^-\wr\\
  \rdual{M}\odot_A S^n\wedge U_{(A,Y)} \ar[r]^-{\id\odot \eta}& \rdual{M}\odot_A M\odot_B \rdual{M} \ar[r]^-{ \ep\odot \id} &S^n\wedge U_{(B,X)}\odot_B \rdual{M} \ar[d]_-\wr\\
  S^n\sm \rdual{M}\ar[u]_-\wr&&S^n \sm \rdual{M}
}\]
are naturally fiberwise homotopic to identity maps after smashing with $S^m$ for some $m$.
If we take $X$ and $Y$ to be a point $\pt$ this reduces to the duality considered in \S\ref{sec:lefschetz}.
If we take $A$ and $B$ to be the category $\tc$ with one object and no nonidentity morphisms, this reduces to the duality in \S\ref{sec:fibration}.

As in \S\ref{sec:lefschetz}, we say that a right dualizable parametrized profunctor $M$ from $(A,Y)$ to $(\tc,X)$ is \textbf{pointwise dualizable}, and a right dualizable parametrized profunctor $\Phi$ from $(\tc, Y)$ to $(B, X)$ is \textbf{absolute}.
If $\Phi$ and $M$ are parametrized profunctors from $(\tc,Y)$ to $(A,X)$ and from $(A,X)$ to $(\tc,Z)$ respectively, then their composite $\Phi\odot_A M$ is the \textbf{$\Phi$-weighted homotopy colimit} of $M$, denoted $\colim^{\Phi}(M)$.
Analogously to \autoref{thm:smpointwise-dual}, we have:

\begin{lem}[{\cite[Lemma 3.5]{PS5}}]
  A parametrized profunctor $M$ from $(A,Y)$ to $(\tc,X)$ is right dualizable if and only if the ex-fibration $M(a) \to Y\times X$ is right dualizable as an ex-fibration for each $a\in A$.
\end{lem}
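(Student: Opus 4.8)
The plan is to run the proof of the unparametrized statement \autoref{thm:smpointwise-dual} essentially verbatim, treating the two implications separately; the point is that this argument is formal once the operations $\odot_A$, the units $U_{(A,Y)}$, and the representable parametrized profunctors are in place.

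For ``only if'', I would exhibit each fiber $M(a)$ as a weighted homotopy colimit and then quote the composition theorem. Let $\Phi_a$ be the parametrized profunctor from $(\tc,Y)$ to $(A,Y)$ with $\Phi_a(a')=\hom_A(a',a)_+\wedge U_Y$, and let $\rdual{\Phi_a}$ from $(A,Y)$ to $(\tc,Y)$ have $\rdual{\Phi_a}(a')=\hom_A(a,a')_+\wedge U_Y$, with evaluation and coevaluation built from insertion of identities and composition of morphisms, smashed with the unit data of $U_Y$ and cofibrantly replaced where the definition of duality asks for it. As in the proof sketch of \autoref{thm:smpointwise-trace}, $\Phi_a$ is absolute, and the co-Yoneda lemma (together with $U_Y$ being a two-sided unit for $\odot$) identifies $\colim^{\Phi_a}(M)=\Phi_a\odot_A M$ with the ex-fibration $M(a)\to Y\times X$ up to equivalence. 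If $M$ is right $n$-dualizable, then since $\Phi_a$ is right dualizable too, the parametrized analogue of \autoref{thm:smcompose-dual} (a composite of right dualizable parametrized profunctors is right dualizable, once more a formal consequence of bicategorical duality as in \autoref{thm:compose-duality}) shows that $\Phi_a\odot_A M$, hence $M(a)$, is right $n$-dualizable as an ex-fibration.

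For ``if'', I would build the dual by hand. Given fiberwise duals $\rdual{M(a)}\to X\times Y$ with coevaluations $\eta_a\colon S^n\wedge U_Y\to M(a)\odot\rdual{M(a)}$ and evaluations $\ep_a\colon\rdual{M(a)}\odot M(a)\to S^n\wedge U_X$, set $\rdual{M}(a):=\rdual{M(a)}$ and, for $\beta\colon a\to a'$ in $A$, let $\rdual{M}(\beta)\colon\rdual{M}(a')\to\rdual{M}(a)$ be the mate of $M(\beta)$ (insert $\eta_a$, apply $M(\beta)$ to the middle factor, contract with $\ep_{a'}$). The coevaluation $\eta\colon S^n\wedge U_{(A,Y)}\to M\odot_{\tc}\rdual{M}$ and evaluation $\ep\colon\rdual{M}\odot_A M\to S^n\wedge U_X$ are then assembled from the $\eta_a$ and $\ep_a$, with the remaining components forced by naturality against the mates, after cofibrantly replacing the domain $S^n\wedge U_{(A,Y)}$ as the definition permits. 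Finally the two triangle identities are checked one object of $A$ at a time, where they collapse to the triangle identities for the individual dualizable ex-fibrations $M(a)$.

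The genuine obstacle is the coherence bookkeeping in the ``if'' direction, which is precisely the difficulty advertised in the introduction: the naively chosen mates $\rdual{M}(\beta)$ are only functorial up to higher homotopy, and the homotopies witnessing the triangle identities are only natural after cofibrant replacement, so rectifying $\rdual{M}$ to a strict diagram with strictly compatible duality data is fiddly to do explicitly. The clean way out---and the one I would actually adopt---is to observe that once the derivator bicategory of parametrized spectra of \autoref{part:derivators} is available, this statement is literally \cite[Lemma 3.5]{PS5} read in that bicategory, with all the rectifications absorbed into the derivator axioms; for the very special weights needed in practice, such as the parametrized analogue of \autoref{eg:smcofibers}, the required rectification is mild enough to carry out by hand.
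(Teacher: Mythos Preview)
Your proposal is correct and lands in the same place as the paper: the paper gives no self-contained proof of this lemma but cites \cite[Lemma~3.5]{PS5} and then, in Part~\ref{part:derivators}, constructs the derivator bicategory \Ex of parametrized spectra precisely so that this citation applies; your final paragraph adopts exactly this strategy. Your additional explicit sketches are sound---the ``only if'' direction via the absolute representables $\Phi_a$ and the composition theorem is the standard argument (and is the same idea used in the proof sketch of \autoref{thm:bipointwise-trace}), and you correctly diagnose that the ``if'' direction runs into homotopy-coherence issues that are cleanly absorbed by the derivator framework rather than resolved by hand.
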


For a parametrized profunctor $M$ from $(A,Y)$ to $(A,Y)$, its \textbf{shadow}, $\sh{M}_A$, is the homotopy coend of the 
$(A\op\times A)$-indexed diagram $(a,a')\mapsto \sh{M(a,a') }$.
Using the compatibility between homotopy coends and the shadow for ex-fibrations we have an isomorphism 
\[\sh{M\odot_B N}_A\cong \sh{N\odot_A M}_B\] 
analogous to those in the previous sections,
whenever both composites make sense.  As before, we write $\sh{(A,Y)}_A$ for $\sh{U_{(A,Y)}}_A$.

If $M$ is a right $n$-dualizable parametrized profunctor from $(A, Y)$ to $(B,X)$, $Q$ is a parametrized profunctor from 
$(A,Y)$ to $(A,Y)$, and $P$ is a parametrized profunctor from 
$(B,X)$ to $(B,X)$, we define the \textbf{trace} of a natural transformation $f\colon Q\odot_A M\to M\odot_B P$ to be the composite 
\[\xymatrix{
  \sh{Q\wedge S^n}_A \ar[d]^-{\eta} & & \sh{S^n \sm P}_B
  \\
  \sh{Q\odot_A M \odot_B \rdual{M}}_A \ar[r]^-{f \odot \id} &
  \sh{M \odot_B P \odot_B \rdual{M}}_A \ar[r]^{\simeq} &
  \sh{\rdual{M}\odot_A M \odot_B P}_B \ar[u]_-{\ep}
}\]
We now have the following  theorem generalizing \autoref{thm:smcompose-dual} and \autoref{thm:dercomposite}; we state it for morphisms that are additionally \emph{twisted} in the codomain, since this is what we need for the Reidemeister trace.
It is again an instance of~\cite[Prop.~7.5]{PS2}, but now occurring in the ``bicategory of diagrams in the derivator bicategory of parametrized spectra''.
Since the latter derivator bicategory is constructed in \autoref{thm:exbicat} below, here is where we really begin relying on the work of \autoref{part:derivators}.

\begin{thm}
\label{thm:compose-traces3} 
  Let $N$ be a parametrized profunctor from $(A,Y)$ to $(B,X)$ and $M$ be a parametrized profunctor from $(B,X)$ to $(C,Z)$.
  If $M$ and $N$ are right dualizable then so is $M\odot N$.

  If $P$ is a parametrized profunctor from $(C,Z)$ to $(C,Z)$ and $g\colon N\to N$ and $f\colon M\to M\odot_B P$ are natural transformations, then 
  \[\tr(g\odot_B f)=\tr(f)\circ \tr(g).\]
  In particular, if we take $N$ to be an absolute parametrized profunctor $\Phi$ and $g=\id_\Phi$ we have 
  \[\tr(\colim^{\Phi}(f))=\tr(f)\circ \tr(\id_\Phi).\]
\end{thm}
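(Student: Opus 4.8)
The plan is to reduce Theorem~\ref{thm:compose-traces3} to the corresponding statement about bicategorical traces in~\cite[Prop.~7.5]{PS2}, exactly as was done for \autoref{thm:dercomposite} and \autoref{thm:compose-traces2}. First I would observe that parametrized profunctors, together with the composition $\odot$, the units $U_{(A,Y)}$, and the shadow $\sh{-}_A$, organize into a \emph{bicategory with shadow} (a ``$\mu$-bicategory'' in the terminology of~\cite{kate:traces,PS2}): the objects are pairs $(A,Y)$, the 1-cells from $(A,Y)$ to $(B,X)$ are the parametrized profunctors, the 2-cells are natural transformations, horizontal composition is $\odot$, the unit 1-cells are the $U_{(A,Y)}$, and the shadow functor is the homotopy coend. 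The associativity and unit equivalences are the ones already recorded in \S\ref{sec:parametrize}, and the cyclicity isomorphism $\sh{M\odot_B N}_A\cong \sh{N\odot_A M}_B$ is the one displayed there; checking the coherence axioms is the kind of bookkeeping that is routine in principle (and is carried out in the abstract in~\cite{PS5}, whose Theorems~2.5, 2.6 and~11.13 are cited here).

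Once that framework is in place, the notions introduced in \S\ref{sec:parametrize} match the bicategorical ones verbatim: ``right $n$-dualizable'' is right dualizability of a 1-cell (with the suspension $S^n\wedge-$ built in to accommodate the stable setting), the trace of $f\colon Q\odot_A M\to M\odot_B P$ is the bicategorical trace of a $(Q,P)$-twisted endomorphism of the dualizable 1-cell $M$, and $\colim^{\Phi}(M)=\Phi\odot_A M$ with $g=\id_\Phi$ is the special case of composing with an endomorphism of the second factor. So the first assertion, that $M\odot N$ is right dualizable when $M$ and $N$ are, is precisely \autoref{thm:compose-duality} internalized to this bicategory: the dual of $M\odot N$ is $\rdual N\odot\rdual M$, with evaluation and coevaluation built from those of $M$ and $N$ via the unit and associativity equivalences, and the triangle identities follow by a diagram chase. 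The composition-of-traces formula $\tr(g\odot_B f)=\tr(f)\circ\tr(g)$, and its specialization $\tr(\colim^{\Phi}(f))=\tr(f)\circ\tr(\id_\Phi)$, are then literally the content of~\cite[Prop.~7.5]{PS2} read in this bicategory, so no new computation is required beyond invoking that result.

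I would therefore structure the proof as: (1) assemble parametrized profunctors into a bicategory with shadow, citing~\cite{PS5} for the verification of the coherence axioms; (2) identify dualizability, traces, and weighted colimits with their bicategorical counterparts; (3) apply~\cite[Prop.~7.5]{PS2} (together with the composition-of-duals statement, which is the bicategorical form of \autoref{thm:compose-duality}) to conclude; (4) specialize to $N=\Phi$ absolute and $g=\id_\Phi$, using the natural equivalence $\Phi\odot_A M\simeq\colim^\Phi(M)$ and the unit equivalence $\id_\Phi\odot_A f=\colim^\Phi(f)$, to obtain the displayed formula.

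The main obstacle is the usual homotopical friction flagged in the introduction: $\odot$ is only associative and unital \emph{up to equivalence}, the evaluation/coevaluation maps require cofibrant replacement of their domains with respect to the projective (or Reedy) model structure, and the homotopy coends that define composition and shadow must be constructed compatibly so that the cyclicity and associativity equivalences actually cohere. Making all of this precise enough that one genuinely has a bicategory with shadow — rather than something that merely looks like one pointwise — is exactly why this paper and~\cite{PS5} pass to derivators. So in the topological part I would only sketch this step, asserting that the needed coherence holds (it is the content of the derivator bicategory constructed in \autoref{part:derivators}) and deferring the honest verification to~\cite{PS5} and Part~\ref{part:derivators}; the remaining steps are then formal.
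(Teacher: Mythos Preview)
Your proposal is correct and matches the paper's approach exactly: the paper states just before this theorem that ``its proof is the same diagram chase, which can be found (in the abstract context of bicategories) in~\cite[Prop.~7.5]{PS2},'' and defers the rigorous construction of the relevant bicategory with shadow to Part~\ref{part:derivators} and~\cite{PS5}, precisely as you outline. Your identification of the homotopical coherence issue as the reason for passing to derivators is also on the mark.
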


As before, in order to be able to use this theorem we need to be able to describe $\tr(f)$ and $\tr(\id_\Phi)$.  Our approach 
will be very similar to the method used in \S\ref{sec:lefschetz}, so we start with a generalization of \autoref{thm:smpointwise-trace}.

\begin{lem} \label{thm:bipointwise-trace} 
  Let $M$ be a right dualizable parametrized profunctor 
  from $(A, Y)$ to $(B,X)$, $P$ be a parametrized profunctor 
  from $(B,X)$ to $(B,X)$, and  $f\colon M\to M\odot_BP$ be a natural transformation.  For any conjugacy class $[a\xto{\alpha} a]$ in $A$, the stable 
  homotopy class of the composite
  \begin{equation}
    \xymatrix{ S^n \ar[r]^-{[\alpha]} & S^n \sm \sh{(A,Y)}_A \ar[r]^-{\tr(f)} & \sh{P}_B }
  \end{equation}
  is equal to the trace of the composite
  \begin{equation}
    \xymatrix{ {}_a M \ar[r]^-{{}_\alpha M} & {}_a M \ar[r]^-{{}_a f} & {}_a(M\odot_B P) .}
  \end{equation}
  Here ${}_a M$ denotes the functor $B\op\to \ptop$ defined by ${}_a M(b) = M(a,b)$.
\end{lem}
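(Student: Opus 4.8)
The plan is to imitate the proof sketch of \autoref{thm:smpointwise-trace}, replacing diagrams of spaces with diagrams of ex-fibrations and keeping track of the codomain twisting by the profunctor $P$. First I would define a parametrized profunctor $\Phi_a$ from $(\tc,Y)$ to $(A,Y)$ by $\Phi_a(a') = \hom_A(a',a)_+ \sm U_Y$, together with its candidate right dual $\rdual{\Phi_a}$ from $(A,Y)$ to $(\tc,Y)$ given by $\rdual{\Phi_a}(a') = \hom_A(a,a')_+ \sm U_Y$. The evaluation and coevaluation are built out of the unit maps for $U_Y$ in \S\ref{sec:fibration} composed with the inclusion of identity morphisms and with composition of morphisms in $A$; the triangle identities reduce to the associativity/unit coherences for $U_Y$ together with the usual simplicial identities used to show that representable weights are absolute in the unparametrized case (this is exactly the content of \cite[Lemma 5.11]{PS5} being invoked via \autoref{thm:bipointwise-trace}'s \S\ref{sec:lefschetz} analogue). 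So $\Phi_a$ is absolute.

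Next I would identify $\colim^{\Phi_a}(M) = \Phi_a \odot_A M$ with the ex-fibration ${}_a M$ (using the second natural equivalence $M\odot_B(W\wedge U_{(B,X)})\simeq W\wedge M$ applied with the representable weight, i.e. the co-Yoneda equivalence for homotopy coends over $A$). Then, the morphism $\omega_\alpha\colon \Phi_a\to\Phi_a$ induced by precomposition with $\alpha\colon a\to a$ gives, under this identification, $\omega_\alpha\odot_A f = ({}_a f)\circ({}_\alpha M)\colon {}_a M\to {}_a(M\odot_B P)$, which is precisely the composite appearing in the statement. Applying \autoref{thm:compose-traces3} with $N=\Phi_a$, $g=\omega_\alpha$, and the given $f\colon M\to M\odot_B P$, we get
\[\tr\big(({}_a f)\circ({}_\alpha M)\big) = \tr(\omega_\alpha\odot_A f) = \tr(f)\circ\tr(\omega_\alpha),\]
where $\tr(f)$ here is the map $\sh{(A,Y)}_A\to \sh{P}_B$ and $\tr(\omega_\alpha)$ is a stable map $S^n\to S^n\sm\sh{(A,Y)}_A$.

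The last step is to compute $\tr(\omega_\alpha)$ and show it is the map labelled $[\alpha]$ in the statement. Unwinding the definition of the trace for the dualizable profunctor $\Phi_a$, $\tr(\omega_\alpha)$ is the composite $S^n \xto{\eta} \sh{\Phi_a\odot_A\rdual{\Phi_a}} \xto{\omega_\alpha\odot\id} \sh{\Phi_a\odot_A\rdual{\Phi_a}} \xto{\simeq} \sh{\rdual{\Phi_a}\odot_{\tc}\Phi_a} \xto{\ep} S^n\sm\sh{(A,Y)}_A$; since the shadow of $U_Y$ is $(\Lambda Y)_+$ and the shadow of $U_{(A,Y)}$ collapses to the coproduct over endomorphisms of $A$ as in \S\ref{sec:lefschetz}, one traces through that this picks out exactly the component indexed by the conjugacy class $[\alpha]$ — i.e. it is the generator $[\alpha]\colon S^n\to S^n\sm\sh{(A,Y)}_A$. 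The main obstacle I expect is precisely this bookkeeping: verifying that all the fibrant/cofibrant (or Reedy) replacements needed to make $\eta$ and $\epsilon$ honest maps do not disturb the identification of $\tr(\omega_\alpha)$ with $[\alpha]$, and checking the triangle identities for $\Phi_a$ in the parametrized setting. As elsewhere in the paper, the cleanest route is to note that all of this is formal in a bicategory with shadows and has already been carried out in the derivator-bicategory language of \cite{PS5}, so the proof here can legitimately be given as a sketch that reduces to \autoref{thm:compose-traces3} plus the computation of the coefficient vector of the representable weight $\Phi_a$.
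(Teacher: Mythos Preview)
Your proposal is correct and follows essentially the same approach as the paper's own sketch: define the representable parametrized profunctor $\Phi_a(a')=\hom_A(a',a)_+\sm U_Y$ with its dual, identify $\Phi_a\odot_A M\simeq {}_aM$ and $\omega_\alpha\odot_A f$ with $({}_af)\circ({}_\alpha M)$, apply \autoref{thm:compose-traces3}, and then compute $\tr(\omega_\alpha)=[\alpha]$. Your caveat about the cofibrant-replacement bookkeeping and the deferral to the derivator-bicategory framework of \cite{PS5} also matches the paper's handling exactly.
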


Formally, this result follows by applying \cite[Lemma 11.15]{PS5} 
to the derivator bicategory of parametrized spectra constructed in \autoref{thm:exbicat}.
However, for topological intuition we sketch what the proof looks like in this case.

\begin{proof}[Sketch of proof]
For each $a\in A$ we can define a parametrized profunctor $\Phi_a$ from $(\tc,Y)$ to $(A,Y)$ by 
\[\Phi_a(a')=\hom_A(a',a)_+\wedge U_Y.\]  As in \autoref{thm:smpointwise-trace}, this is absolute;
its right dual is defined by 
\[(\rdual{\Phi_a})(a')=\hom_A(a,a')_+\wedge U_Y.\]
The coevaluation and evaluation extend the maps defined in \autoref{thm:smpointwise-trace}.

The value of $\Phi_a\odot_A M$ on an element $b\in B$ is the homotopy coend of the $A\times A\op$ diagram 
\[(x,x')\mapsto \Phi_a(x)\odot M(x',b)\coloneqq A(a,x)_+\wedge U_Y\odot M(x',b) \cong A(a,x)_+\wedge M(x',b)\]
This is equivalent to the functor ${}_aM$.
In the same way we can identify
\[\alpha \odot_A \id_M \colon \Phi_a\odot_A M\to \Phi_a\odot_A M\]
with the natural transformation ${}_\alpha M \colon \,_aM\to \,_aM$ that is induced by the action of $\alpha$, and
\[{\id_{\Phi_a}\odot_A f} \colon \Phi_a\odot_A M\to  \Phi_a\odot_A M \odot_B P \]
with ${}_a f\colon \,_aM\to \,_a(M\odot_B P)$.

Now a representative $\alpha$ of a conjugacy class in $A(a,a)$ defines a map $\omega_\alpha\colon \Phi_a\to \Phi_a$
by composition.  Explicit descriptions of the coevaluation and evaluation above show that
the trace of $\omega_\alpha$ is the map $[\alpha]\colon S^0\to S^0\wedge \sh{A,Y}_A$ that takes the non-basepoint of $S^0$ to $\alpha$.
Thus, \autoref{thm:compose-traces3} implies the desired equality.
\end{proof}

\section{Linearity of the Reidemeister traces}
\label{sec:reidemeister}

We can now use  \autoref{thm:compose-traces3} and \autoref{thm:bipointwise-trace} to generalize \autoref{Lefschetz} to the 
Reidemeister trace.  Recall that in \autoref{eg:smcofibers} we showed that if $B$ is the category with two objects and one nonidentity morphism  $\alpha\colon a\to b$
and $\Phi\colon B\op\to \ptop$ is defined by $\Phi(b)=S^0$ and $\Phi(a)=\pt$ then $\Phi$ is absolute.  
Since a based space is an ex-fibration over $\pt$,  $\Phi$ is also a parametrized profunctor $\Phi$ from $(\tc, \pt)$ to $(B,\pt)$.  Further, the coevaluation and evaluation for the 
profunctor $\Phi$ are also a coevaluation and evaluation for the parametrized profunctor $\Phi$.

\begin{thm}\label{thm:addbicat}  If $M_a$ and $M_b$ are right dualizable ex-fibrations over $\pt\times X$, $P$ is an 
ex-fibration over $X\times X$ and $f_a\colon M_a\to M_a\odot P$,  $f_b\colon M_b\to M_b\odot P$, and $i\colon M_a\to M_b$ are fiberwise maps 
so that the diagram 
\[\xymatrix{M_a\ar[r]^i\ar[d]^{f_a}&M_b\ar[d]^{f_b}\\
M_a\odot P\ar[r]^{i}&M_b\odot P}\]  commutes then 
\[\tr(\colim^\Phi (f))=\tr(f_b)-\tr(f_a).\]
\end{thm}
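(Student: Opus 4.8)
The plan is to mirror exactly the proof of \autoref{Lefschetz}, but now in the parametrized (bicategorical) setting, using \autoref{thm:compose-traces3} and \autoref{thm:bipointwise-trace} in place of \autoref{thm:dercomposite} and \autoref{thm:smpointwise-trace}. First I would package the data $(M_a,M_b,i)$ as a parametrized profunctor $M$ from $(B,\pt)$ to $(\tc,X)$, where $B$ is the arrow category with nonidentity morphism $\alpha\colon a\to b$; concretely $M(a)=M_a$, $M(b)=M_b$, and $M(\alpha)=i$. By the pointwise-dualizability lemma (the parametrized analogue of \autoref{thm:smpointwise-dual}), $M$ is right dualizable since each $M_a$, $M_b$ is right dualizable as an ex-fibration over $\pt\times X$. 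The commuting square says precisely that the $f_a,f_b$ assemble into a natural transformation $f\colon M\to M\odot P$ (where $P$ is regarded as a parametrized endo-profunctor of $(\tc,X)$, i.e.\ the endo-profunctor concentrated at the unique object).

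Next I would invoke the fact, recalled just before the theorem statement, that $\Phi\colon B\op\to\ptop$ with $\Phi(b)=S^0$, $\Phi(a)=\pt$ is absolute, and moreover absolute as a parametrized profunctor from $(\tc,\pt)$ to $(B,\pt)$ with the same coevaluation/evaluation. Then \autoref{thm:compose-traces3} (the ``in particular'' clause, with $g=\id_\Phi$) gives
\[ \tr(\colim^\Phi(f)) = \tr(f)\circ \tr(\id_\Phi). \]
Here $\tr(\id_\Phi)$ is the coefficient vector of $\Phi$, an element of $\pi_0^s(\sh{(B,\pt)}_B)\cong\pi_0^s(\sh{B}_B)$. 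Since $B$ has exactly the two conjugacy classes $[\id_a]$ and $[\id_b]$, this group is $\bbZ\oplus\bbZ$, so $\tr(\id_\Phi)=\phi_a[\id_a]+\phi_b[\id_b]$ for integers $\phi_a,\phi_b$. These are the \emph{same} integers computed in the proof of \autoref{Lefschetz}, because $\Phi$ here and $\Phi$ there are literally the same profunctor over a point, and $\tr(\id_\Phi)$ only depends on $\Phi$ (and the coevaluation/evaluation), not on $M$ or $P$; so $\phi_b=1$ and $\phi_a=-1$, exactly as established there by the two test cases $(S^n\to S^n)$ and $(\pt\to S^n)$. (If one prefers not to quote this, one reruns those two scalar computations verbatim; they take place over a point and are unchanged.)

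It remains to identify the components of $\tr(f)\colon \sh{(B,\pt)}_B\to\sh{P}$ against the generators $[\id_a],[\id_b]$. This is exactly what \autoref{thm:bipointwise-trace} provides: the composite of $\tr(f)$ with the class $[\id_a]$ is the trace of ${}_a f\circ {}_{\id_a}M = {}_a f\colon {}_aM\to {}_a(M\odot_B P)$, i.e.\ the trace of $f_a\colon M_a\to M_a\odot P$; similarly for $b$. Plugging these into the linear combination and using $\phi_b=1$, $\phi_a=-1$ yields
\[ \tr(\colim^\Phi(f)) = \phi_b\cdot\tr(f_b) + \phi_a\cdot\tr(f_a) = \tr(f_b) - \tr(f_a), \]
which is the claim. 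The one genuinely non-formal point to be careful about is the cofibrant-replacement subtlety in \autoref{eg:smcofibers}: the coevaluation/evaluation exhibiting absoluteness of $\Phi$ involve replacing $\rdual\Phi\odot_{\tc}\Phi$ by a Reedy-cofibrant diagram built from disks, and one must check this goes through identically in the ex-fibration context (it does, since a based space is an ex-fibration over $\pt$ and the relevant smashing, suspension, and disk-filling constructions are unchanged over a point) — this is really the only place where ``parametrized'' could have caused trouble, and it doesn't, which is why the proof is otherwise a line-by-line transcription of the proof of \autoref{Lefschetz}.
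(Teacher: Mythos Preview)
Your proof is correct and takes essentially the same approach as the paper: package the data as a pointwise dualizable parametrized profunctor $M$ from $(B,\pt)$ to $(\tc,X)$, invoke the absoluteness of $\Phi$ and the coefficient computation $\phi_a=-1$, $\phi_b=1$ from the proof of \autoref{Lefschetz}, and apply the linearity formula coming from \autoref{thm:compose-traces3} together with \autoref{thm:bipointwise-trace}. The paper's proof is terser, but you have unpacked exactly the steps it relies on.
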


\begin{proof} As shown in \autoref{eg:smcofibers}, $\Phi$ is absolute and $\pi_0^s(\sh{B})\cong \bbZ\oplus \bbZ$.  We saw  in the proof of \autoref{Lefschetz} 
that
$\tr(\id_\Phi)$ is determined by morphisms
$\phi_a, \phi_b\colon S^n\to S^n$, which we calculated to be $\phi_a=-1$ and $\phi_b=1$.
Thus, for any pointwise dualizable parametrized profunctor $M$ from $(B,\pt)$ to $(\tc,X )$ and endomorphism
$f\colon M\to M\odot_\tc P$, the desired statement follows from the linearity formula 
\[\tr(\colim ^\Phi(f))=\phi_a \cdot \tr(f_a)+\phi_b\cdot \tr(f_b).\qedhere\]
\end{proof}

\begin{rmk}
This theorem also holds if $M_a$ and $M_b$ are right dualizable ex-fibrations over $Y\times X$.  In the proof we replace
$\Phi$ 
with the parametrized profunctor from $(\tc,\pt)$ to $(B, Y)$ whose value on $b$ is $S_Y$ 
and on $a$ is the identity map of $Y$ regarded as an ex-fibration (no disjoint section is added).
In this case, the $\Phi$-weighted colimit of $M$ is the fiberwise  homotopy pushout of the diagram on the left below.
\begin{equation}
\vcenter{\xymatrix{S^0\wedge M_a\ar[r]\ar[d]&S^0\wedge M_b\\
  0\wedge M_a}}\hspace{2cm}
\vcenter{\xymatrix{M_a\ar[r]\ar[d]&M_b\\
Y\times X
  }}
\end{equation}
More explicitly, $\colim^\Phi M$ is the homotopy pushout of the diagram on the right above; we call this the \textbf{fiberwise mapping cone} on $i$ and denote it by $M(i)$.
\end{rmk}

Now we can state our additivity theorem for the Reidemeister trace.

\begin{thm}\label{thm:addreidemeister}  Suppose $Y$ and $X$ are closed smooth manifolds or compact ENRs and $i\colon Y\to X$, 
$f\colon X\to X$ and $g\colon Y\to Y$ are continuous maps so that $i\circ f=g\circ i$.  Then 
\[R(f)-i(R(g))=R_{X|Y}(f),\] where $R_{X|Y}(f)$ is the trace of the induced map
$\widecheck{M(i)}\to \widecheck{M(i)}\odot X_f$
 and   $i\colon \Lambda^{g} Y\xto{}\Lambda^f X$  is induced by $i$.
\end{thm}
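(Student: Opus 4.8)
The plan is to realize all three terms of the identity as traces fitting into a single instance of the linearity machinery, and then read off the formula from \autoref{thm:addbicat}. First I would set up the parametrized profunctor data: take $B$ to be the arrow category with nonidentity morphism $\alpha\colon a\to b$, and $\Phi\colon B\op\to\ptop$ as in \autoref{eg:smcofibers} (or its $Y$-parametrized variant from the remark after \autoref{thm:addbicat}, since here the left-hand base is genuinely $Y$ rather than a point). Let $M$ be the parametrized profunctor from $(B,\pt)$-ish data to $(\tc,X)$ with $M_a=\widecheck{S_Y}$ and $M_b=\widecheck{S_X}$, and with the structure map $M_a\to M_b$ given by $\widecheck{i}$ (using that $i\colon Y\to X$ induces a fiberwise map of the relevant ex-fibrations, after the appropriate pullback/pushforward). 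By \autoref{thm:mfd-cwdual}, both $\widecheck{S_Y}$ and $\widecheck{S_X}$ are right $n$-dualizable when $Y$ and $X$ are closed smooth manifolds (or by the analogous ENR statement), so $M$ is pointwise dualizable.

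Next I would supply the twisting. Take $P=X_f$ as the ex-fibration over $X\times X$, and build fiberwise ex-maps $f_b\colon M_b\to M_b\odot X_f$ and $f_a\colon M_a\to M_a\odot X_f$ from the map $f$ and the homotopy-commutative square relating $f$, $g$ and $i$; this is exactly the kind of induced map produced by \autoref{thm:square-trace} (applied with $\fn = i$ in the notation there, or more precisely to the square $i\circ f = g\circ i$), and the compatibility with $\widecheck{i}$ — i.e.\ commutativity of the square in the hypothesis of \autoref{thm:addbicat} — follows from the corresponding compatibility of homotopies in that square. With this data in place, \autoref{thm:addbicat} (in the $Y\times X$ form of its remark) gives
\[
\tr(\colim^\Phi(f)) = \tr(f_b) - \tr(f_a),
\]
and $\colim^\Phi(f)$ is precisely the induced endomorphism $\widecheck{M(i)}\to\widecheck{M(i)}\odot X_f$ whose trace is $R_{X|Y}(f)$ by definition.

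It then remains to identify the two terms on the right. For $\tr(f_b)$: by \autoref{thm:reidemeister_id} the trace of the induced map $\widecheck{S_X}\to\widecheck{S_X}\odot X_f$ is (after passing to $H_0$) the Reidemeister trace $R(f)$, so $\tr(f_b)$ corresponds to $R(f)$. For $\tr(f_a)$: the same theorem applied to $g\colon Y\to Y$ identifies the trace of $\widecheck{S_Y}\to\widecheck{S_Y}\odot Y_g$ with $R(g)\in H_0(\Lambda^g Y)$; the only extra point is that $f_a$ as constructed lands in $M_a\odot X_f$ rather than $M_a\odot Y_g$, so I would use \autoref{thm:square-trace} (whose conclusion describes exactly the map $\Lambda^g Y_+\to\Lambda^f X_+$ sending $\gamma\colon a\leadsto g(a)$ to $i(\gamma)\cdot\alpha(a)$) to see that $\tr(f_a)$ is the image of $R(g)$ under the map $i\colon\Lambda^g Y\to\Lambda^f X$ induced by $i$. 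Combining these three identifications with the displayed formula yields $R(f) - i(R(g)) = R_{X|Y}(f)$.

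\textbf{Main obstacle.} The formal linearity step is essentially free once the inputs are in place; the real work is the careful bookkeeping of the parametrized/twisted structure maps — specifically, checking that the maps $f_a$, $f_b$, and $\widecheck{i}$ assemble into a genuine morphism $f\colon M\to M\odot_\tc P$ of parametrized profunctors making the square of \autoref{thm:addbicat} commute (up to the coherent homotopy data), and that the resulting traces match the classical invariants $R(f)$, $R(g)$ and the relative Reidemeister trace on the nose. I expect the comparison $\tr(f_a) = i(R(g))$ to be the most delicate point, since it requires reconciling the $X_f$-twisting forced by the ambient diagram with the $Y_g$-twisting intrinsic to $R(g)$, which is precisely where \autoref{thm:square-trace} must be invoked with care.
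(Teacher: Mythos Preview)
Your overall strategy matches the paper's, but there is a concrete gap in the setup that, as written, prevents the argument from going through. You set $M_a=\widecheck{S_Y}$ and $M_b=\widecheck{S_X}$; but $\widecheck{S_Y}$ is an ex-fibration over $\pt\times Y$ while $\widecheck{S_X}$ is over $\pt\times X$, so they do not live in the same hom and cannot form a parametrized profunctor to $(\tc,X)$. The remark after \autoref{thm:addbicat} does not fix this: that remark varies the \emph{left} base (replacing $\pt$ by $Y$), whereas your mismatch is on the \emph{right}. The paper resolves this by pushing forward along $i$: it takes $M_a=i_!(\widecheck{S_Y})\simeq \widecheck{S_Y}\odot {}_iX$, which is now genuinely over $X$, with the structure map $M_a\to M_b$ coming from the induced $i_!(S_Y)\to S_X$. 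This also explains why pointwise dualizability of $M_a$ is not immediate from \autoref{thm:mfd-cwdual} alone: one needs \autoref{thm:bco-dual} (dualizability of ${}_iX$) together with the composition-of-duals result to conclude that $i_!(\widecheck{S_Y})$ is right dualizable.

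With $M_a$ corrected in this way, the twisted endomorphism $f_a$ becomes the map $\tilde g\colon \widecheck{S_Y}\odot {}_iX \xrightarrow{g\odot\id} \widecheck{S_Y}\odot Y_g\odot {}_iX \xrightarrow{\id\odot k} \widecheck{S_Y}\odot {}_iX\odot X_f$, where $k$ is the map produced by \autoref{thm:square-trace}. Your identification $\tr(f_a)=i(R(g))$ then needs two ingredients, not one: first \autoref{thm:compose-traces2} to factor $\tr(\tilde g)$ as $\tr(k)\circ\tr(g)$, and only then \autoref{thm:square-trace} to identify $\tr(k)$ with the map $\Lambda^g Y_+\to\Lambda^f X_+$ induced by $i$. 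You correctly flagged this step as the delicate one, but the missing reference to \autoref{thm:compose-traces2} is exactly what bridges the $Y_g$-twisting intrinsic to $R(g)$ and the $X_f$-twisting forced on $M_a$. Once these corrections are made, the remainder of your argument --- applying \autoref{thm:addbicat} and reading off $R_{X|Y}(f)=\tr(f_b)-\tr(f_a)=R(f)-i(R(g))$ --- is exactly the paper's proof.
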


We claim that this is a refinement of \cite[Theorem~3.2.1]{ferrario}.
To see this, let $j\colon X\to X/Y$ be the quotient map.
If $i\colon Y\xto{} X$ is a cofibration
of compact ENR's or closed smooth manifolds, then $j(R_{X|Y}(f))=R(f/Y)$ \cite{kate:relative} and $ji(R(f|_Y))$ is equal to the Lefschetz number $L(f|_Y)$.
Thus, we recover Ferrario's result
\[  j(R(f))- L(f|_Y)= R(f/Y).\]

\begin{proof} 
Regarding $f\colon S_X\to S_X$ 
as a fiberwise map over $f\colon X\to X$, we have a map $f\colon \widecheck{S_X}\to \widecheck{S_X}\odot X_f$
over $X$  and a similar map 
$g\colon \widecheck{S_Y}\to \widecheck{S_Y}\odot Y_{g}$ over $Y$.  The map $i\colon Y\to X$ induces a fiberwise map $i_!(S_Y)\to S_X$,
and by \autoref{thm:square-trace}   the equality $i\circ g=f\circ i$
defines a fiberwise map $k\colon Y_{g}\odot \,_iX\to \,_iX\odot X_f$ over $Y\times X$.  We compose these last two maps to define  
a twisted endomorphism of $i_!(\widecheck{S_Y})$:
 \[\tilde{g}\colon \widecheck{S_Y}\odot \,_iX\xto{g\odot \id}   \widecheck{S_Y}\odot Y_{g}\odot \,_iX\xto{\id \odot k} \widecheck{S_Y}\odot \,_iX\odot X_f.\]
This gives a commutative diagram 
\[\xymatrix{
i_!(\widecheck{S_Y})\ar[r]\ar[d]^{\tilde{g}}&\widecheck{S_X}\ar[d]^f\\
i_!(\widecheck{S_Y})\odot X_f\ar[r]&\widecheck{S_X}\odot X_f
}\]
of ex-fibrations over $X$.  Let $M$ from $(B,\star)$ to $(\bbone, X)$ be the parametrized profunctor 
where $M_a=i_!(\widecheck{S_Y})$ and $M_b=\widecheck{S_X}$.  The diagram above gives us a natural transformation
$f\colon M\to M\odot X_f$.  
For $\Phi$ as defined before \autoref{thm:addbicat}, the $\Phi$-weighted colimit of $M$ is  $\widecheck{M(i)}$, the 
fiberwise mapping cone on $i$.  

By assumption $\widecheck{S_X}$ and $\widecheck{S_Y}$ are right $n$-dualizable for some integer $n$.   By \autoref{thm:bco-dual} the object $_iS$ 
 is right dualizable.  Then \autoref{thm:compose-traces2} implies $i_!\widecheck{S_Y}\cong \widecheck{S_Y}\odot \,_iX$ is right dualizable. 
 Using \autoref{thm:addbicat} we see 
  \[ \tr(h\colon \widecheck{Mi}\to \widecheck{Mi}\odot X_f) = \tr(f\colon \widecheck{S_X}\xto{}\widecheck{S_X}\odot X_f) - \tr(\tilde{g}\colon i_!(\widecheck{S_Y})\xto{}i_!(\widecheck{S_Y})\odot X_f). \]
 In \autoref{thm:reidemeister_id} we saw that  the trace of $f\colon \widecheck{S_X}\to 
  \widecheck{S_X}\odot X_f$ is the Reidemeister trace of $f$.
Theorems \ref{thm:compose-traces2} and \ref{thm:square-trace} 
imply
\[\tr(\tilde{g}\colon i_!(\widecheck{S_Y})\xto{}i_!(\widecheck{S_Y})\odot X_f)=i(\tr(g\colon \widecheck{S_Y}\to \widecheck{S_Y}\odot Y_{g} ))=i(R(g)).\]
Finally, $R_{X|Y}(f)$ is defined to be the 
trace of $\widecheck{Mi}\xto{M(f)} \widecheck{Mi}\odot X_f$.
\end{proof}

\part{Indexed monoidal derivators}
\label{part:derivators}

It remains to set up the abstract context that encapsulates all the fibrant and cofibrant replacements, so that \autoref{thm:dercomposite}, \autoref{thm:compose-traces2}, and \autoref{thm:compose-traces3} are, like~\cite[Theorem~5.2]{PS4}, all instances of~\cite[Prop.~7.5]{PS2}.
This is \emph{almost} completed by the companion paper~\cite{PS5}, which studies linearity formulas for trace-like invariants in categorical generality.
The examples given therein suffice for the additivity of the Lefschetz number (\S\ref{sec:lefschetz}), but for the additivity of Reidemeister trace (\S\S\ref{sec:parametrize}--\ref{sec:reidemeister}) we need to combine this theory with that of~\cite{maysig:pht,PS3}.
Therefore, in this part of the paper, we will show that the parametrized profunctors used in \S\S\ref{sec:fibration}--\ref{sec:reidemeister}
form a \emph{derivator bicategory}, so that the results of \cite{PS5} can then be applied in this context.
This provides the foundation for the results in \S\ref{sec:parametrize}.

Relative to \autoref{part:topological}, we now make one change of context: instead of working with ex-fibrations we will consider parametrized spaces.
A \textbf{parametrized space} over a base space $X$ is an arbitrary space $E$ with 
maps $s\colon X\to E$ and $p\colon E\to X$ so that $p\circ s=\id$. This is less restrictive than an ex-fibration; we think of an ex-fibration as a very nice parametrized 
space that is especially well behaved homotopically.  This change reflects the shift from choosing specific well behaved examples in the previous sections to the more systematic 
approach using model categories in the remaining sections.

\section{Background}
\label{sec:deriv-bicat}

We start by recalling some relevant definitions from \cite{PS5}.
We will not recall the definition of a \emph{derivator}; all we need to know is that it is a refinement of a homotopy category that also includes information about diagram categories and homotopy limits and colimits.
If $\D$ is a derivator and $A$ is a small category, we write $\D(A)$ for the corresponding homotopy category of $A$-shaped diagrams.
We refer to object of $\D(A)$ as ($A$-shaped) \emph{coherent diagrams} in $\D$.
See~\cite{PS5} or~\cite{groth:ptstab,gps:additivity} for more details.

The basic context used in~\cite{PS5}  is a derivator bicategory.
By definition, a \textbf{derivator bicategory} $\dW$ consists of the following data.
  \begin{itemize}
  \item A collection of objects $R$, $S$, $T$, $\ldots$.
  \item For each pair of objects $R$ and $S$ a derivator $\dW({R,S})$.
    We think of the category $\dW(R,S)(A)$ as the homotopy category of $A$-shaped diagrams in the ``hom-category'' $\dW(R,S)$.
  \item For each triple of objects $R$, $S$, and $T$, a morphism of derivators
    \[\odot \colon \dW({R,S})\times \dW({S,T})\to \dW({R,T}).\]
    That is, we have a pseudonatural transformation between 2-functors $\cCat\op\to\cCAT$, which has components
    \[ \dW({R,S})(A)\times \dW({S,T})(A)\to \dW({R,T})(A).\]
  \item We require these morphisms $\odot$ to be cocontinuous in each variable separately \cite[Definition~3.19]{gps:additivity}.
  \item For each object $R$, a morphism of derivators $\tc\to \dW(R,R)$ (hence an object $\lI_{R,A}\in \dW(R,R)(A)$, varying pseudonaturally in $A\in\cCat$).
  \item Natural unit and associativity isomorphisms, i.e.\ invertible modifications
    \begin{equation}
      \vcenter{\xymatrix{
          \dW(R,S) \times \dW(S,T) \times \dW(T,U)\ar[r]^-{\id\times\odot}\ar[d]_{\odot\times\id} \drtwocell\omit{\cong} &
          \dW(R,S) \times \dW(S,U)\ar[d]^\odot\\
          \dW(R,T) \times \dW(T,U)\ar[r]_-{\odot} &
          \dW(R,U)
        }}
    \end{equation}
    \begin{equation}
      \xymatrix{\dW(R,S) \ar[r]^-{(\id,\lI)} \drlowertwocell{\cong} &
        \dW(R,S) \times \dW(S,S)\ar[d]^\odot \\
        & \dW(R,S)
      }
      \qquad
      \xymatrix{\dW(R,S) \ar[r]^-{(\lI,\id)} \drlowertwocell{\cong} &
        \dW(R,R) \times \dW(R,S)\ar[d]^\odot \\
        & \dW(R,S).
      }
    \end{equation}
  \item The usual pentagon and unit axioms for a bicategory hold.
  \end{itemize}
  A derivator bicategory is \textbf{closed} if the morphisms $\odot$ participate in a two-variable adjunction of derivators~\cite[Definition~8.1]{gps:additivity}.
  A \textbf{shadow} on a derivator bicategory \dW consists of a derivator \dT and cocontinuous morphisms of derivators
  \[\sh{-} \colon \dW(R,R) \xto{} \dT\]
  for each object $R$, together with invertible modifications
  \begin{equation}
    \vcenter{\xymatrix{
        \dW(R,S)\times \dW(S,R)\ar[rr]^{\cong} \ar[d]_{\odot} \drrtwocell\omit{\cong} &&
        \dW(S,R)\times \dW(R,S)\ar[d]^{\odot}\\
        \dW(R,R)\ar[r]_-{\sh{-}} &
        \dT\ar@{<-}[r]_-{\sh{-}} &
        \dW(S,S)
      }}
  \end{equation}
  satisfying the usual compatibility axioms for a shadow (\cite[Defn.~4.1]{PS2}).
The \emph{underlying bicategory} of a derivator bicategory $\dW$ is the bicategory $\bW$ with the same objects as $\dW$ and with morphisms
given by $\bW(R,S) = \dW(R,S)(\tc)$.

The remaining sections of this paper are a proof of the following theorem.
\begin{repthm}{thm:exbicat}
  There is a closed derivator bicategory \Ex, with a shadow valued in the homotopy derivator of spectra.
  Its objects are compactly generated spaces, its hom $\Ex(R,S)$ is the homotopy derivator of the model category of spectra parametrized over $R\times S$, and its underlying ordinary bicategory is the one constructed in~\cite[Chapter~17]{maysig:pht}.
\end{repthm}
Before we continue to the proof, we will first recall the generalizations of several of the essential results in the previous 
sections to derivator bicategories.

\begin{thm}[{\cite[Theorem 11.7]{PS5}}]\label{thm:derivbicat-prof}
  Given a derivator bicategory \dW, we can construct a derivator bicategory $\dprof(\dW)$, with underlying ordinary bicategory denoted $\prof(\dW)$.  The latter is described as follows:
  \begin{itemize} 
  \item An object is a pair $(A,R)$ where $A\in\cCat$ and $R$ is an object of $\dW$.
  \item The hom category from $(A,R)$ to $(B,S)$ is $\dW({R,S})(A\times B\op)$.
  \item The composition functors are constructed by taking homotopy coends.
  \item The unit object of $(A,R)$, denoted $\lI_{(A,R)}$, is a diagram whose value at $(a,a')$ is the coproduct of $\hom_A(a',a)$ copies of the unit $\lI_R$.
  \end{itemize}
  For the derivator bicategory $\dprof(\dW)$, the hom-derivators are defined by
  \[\dprof(\dW)((A,R),(B,S))(C) = \dW(R,S)(A\times B\op\times C)\]
  and the composition and units are defined analogously.
  If \dW is closed, then so is $\dprof(\dW)$ (and hence also $\prof(\dW)$).

  Finally, if \dW has a shadow valued in a derivator \dT, then so does $\dprof(\dW)$, defined by taking homotopy coends.
  It follows that $\prof(\dW)$ also has a shadow valued in $\dT(\bbone)$.
\end{thm}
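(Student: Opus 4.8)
This is \cite[Theorem~11.7]{PS5}, and we only sketch the strategy. The plan is to build each piece of the derivator-bicategory structure on $\dprof(\dW)$ out of the corresponding piece of $\dW$ using homotopy (co)ends and Kan extensions, and then to deduce the coherence axioms from coherence in $\dW$ together with standard (co)end manipulations (Fubini and the co-Yoneda lemma).

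First we would note that the proposed hom-derivator $\dprof(\dW)((A,R),(B,S))$, whose value at $C$ is $\dW(R,S)(A\times B\op\times C)$, is the shift of the derivator $\dW(R,S)$ by the fixed category $A\times B\op$; since such a shift is again a derivator (see~\cite{groth:ptstab,gps:additivity}), this settles all axioms internal to a single hom-object. Next, the composition morphism is built in two stages: first the morphism induced by $\odot$ of $\dW$ (applied after restricting $M$ and $N$ along the evident projections to the common index category $A\times B\op\times B\times C\op$), then the homotopy left Kan extension computing the homotopy coend in the two $B$-variables --- equivalently, restriction along, and then homotopy colimit over, the twisted arrow category of $B$, i.e.\ the category of elements of $\hom_B$. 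Because $\odot$ is cocontinuous in each variable in $\dW$ and homotopy colimits commute with homotopy colimits, this composition is cocontinuous in each variable, as required of a morphism of derivators. The unit $\lI_{(A,R)}$ would be built the same way, as the homotopy left Kan extension of the constant diagram at $\lI_R$ along the projection $\tw(A)\to A\times A\op$ from the category of elements of $\hom_A$; the relevant comma categories have contractible connected components indexed by $\hom_A(a',a)$, so this left Kan extension has value $\coprod_{\hom_A(a',a)}\lI_R$ at $(a,a')$.

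With this data in hand, I would produce the unit, associativity, and shadow constraints by the usual (co)end manipulations: the isomorphisms $M\odot_B \lI_{(B,S)}\simeq M$ are an instance of the co-Yoneda lemma together with the unit law $X\odot\lI_S\simeq X$ in $\dW$; associativity is Fubini for iterated homotopy coends together with associativity of $\odot$; and the pentagon and triangle axioms then reduce to those of $\dW$. If $\dW$ is closed, the internal homs of $\dprof(\dW)$ are assembled from those of $\dW$ by taking homotopy ends over the cancelled index category (the operation right adjoint to the coends defining $\odot$), and the two-variable adjunction follows by composing the two-variable adjunction in $\dW$ with the Kan-extension adjunctions. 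When $\dW$ carries a shadow $\sh{-}\colon\dW(R,R)\to\dT$, one sets $\sh{M}_{(A,R)}$, for $M\in\dW(R,R)(A\times A\op)$, equal to the homotopy coend over the diagonal copy of $A$ of $\sh{-}$ applied objectwise; the cyclicity isomorphism $\sh{M\odot N}\simeq\sh{N\odot M}$ is again Fubini together with cyclicity of $\sh{-}$ in $\dW$, and restricting to $C=\bbone$ gives the shadow on $\prof(\dW)$ valued in $\dT(\bbone)$.

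The main obstacle will not be any single computation --- all of the ideas above are classical coend calculus --- but rather the bookkeeping required to carry them out \emph{at the level of derivators} rather than merely homotopy categories or bicategories: one must verify that each Kan-extension construction is an honest morphism of derivators (respecting restriction along all functors of diagram categories, together with the relevant exactness and Beck--Chevalley conditions), and that every constraint is an invertible modification satisfying the coherence axioms of a derivator bicategory. This is exactly what \cite[\S11]{PS5} carries out in detail, to which we refer for the complete argument.
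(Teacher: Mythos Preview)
The paper does not actually prove this theorem: it is stated with a citation to \cite[Theorem~11.7]{PS5} and no argument is given here. Your sketch is a faithful outline of the construction one would expect (shifted derivators as hom-objects, composition by homotopy coends over the twisted arrow category, units by left Kan extension, coherence from Fubini and co-Yoneda, closedness from homotopy ends, shadows by coending over the diagonal), and you correctly flag that the real content is the derivator-level bookkeeping carried out in \cite[\S11]{PS5}. Since there is no in-paper proof to compare against, there is nothing further to contrast; your sketch matches the description in the statement and defers appropriately to the cited reference.
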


We follow the definitions in the previous section and say 
a coherent diagram $X\in\dW(R,S)(A)$ is \textbf{pointwise dualizable} if it is right dualizable when regarded as a 1-cell from $(A,R)$ to $(\bbone,S)$ in $\prof(\dW)$.
Similarly, a coherent diagram $\Phi\in\dW(R,R)(A\op)$ is \textbf{absolute} if it is right dualizable when regarded as a 1-cell from $(\bbone,R)$ to $(A,R)$ in $\prof(\dW)$.

The following statements are the generalizations of \autoref{thm:compose-traces3} and \autoref{thm:bipointwise-trace} to derivator bicategories; once we have \autoref{thm:exbicat} then they will imply \autoref{thm:compose-traces3} and \autoref{thm:bipointwise-trace}.

\begin{thm}[{\cite[Theorem 11.13]{PS5}}]  
  \label{thm:bicatderivlinearity}
  If $X\in\dW(R,S)(A)$ is pointwise dualizable and $\Phi\in\dW(R,R)(A\op)$ is absolute, then for any $f\colon X\to X$ we have
  \[ \tr(\colim^\Phi (f)) = \tr(f) \circ \tr(\id_\Phi). \]
\end{thm}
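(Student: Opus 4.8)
The plan is to recognize this statement as an instance of the purely bicategorical trace-composition formula \cite[Prop.~7.5]{PS2}, now read off inside the ordinary bicategory $\prof(\dW)$ supplied by \autoref{thm:derivbicat-prof}. This is exactly the pattern by which \autoref{thm:dercomposite}, \autoref{thm:compose-traces2}, and \autoref{thm:compose-traces3} were obtained; the present statement differs only in the choice of ambient bicategory.

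First I would invoke \autoref{thm:derivbicat-prof} to obtain the derivator bicategory $\dprof(\dW)$, with underlying bicategory $\prof(\dW)$ whose objects are pairs $(A,R)$, whose hom category from $(A,R)$ to $(B,S)$ is $\dW(R,S)(A\times B\op)$, whose composition is built from homotopy coends, and---since $\dW$ carries a shadow valued in $\dT$---which inherits a shadow valued in $\dT(\bbone)$. I would then simply unwind the definitions: to say $X\in\dW(R,S)(A)$ is pointwise dualizable is to say $X$ is a right dualizable 1-cell $(A,R)\to(\bbone,S)$ in $\prof(\dW)$; to say $\Phi\in\dW(R,R)(A\op)$ is absolute is to say $\Phi$ is a right dualizable 1-cell $(\bbone,R)\to(A,R)$ there; their composite $\Phi\odot X$ in $\prof(\dW)$ is by definition the weighted colimit $\colim^\Phi(X)$, a 1-cell $(\bbone,R)\to(\bbone,S)$; and for $f\colon X\to X$ the 2-cell $\id_\Phi\odot f$ is the induced endomorphism $\colim^\Phi(f)$.

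Next I would apply \cite[Prop.~7.5]{PS2} inside the shadowed bicategory $\prof(\dW)$: for composable right dualizable 1-cells $\Psi$ and $X$---the composite $\Psi\odot X$ being right dualizable with dual $\rdual X\odot\rdual\Psi$---and 2-cells $g\colon\Psi\to\Psi$ and $f\colon X\to X$, one has $\tr(g\odot f)=\tr(f)\circ\tr(g)$. Taking $\Psi=\Phi$ and $g=\id_\Phi$ then gives
\[ \tr(\colim^\Phi(f)) = \tr(\id_\Phi\odot f) = \tr(f)\circ\tr(\id_\Phi), \]
which is the claimed formula.

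The hard part is not this final chase---once $\prof(\dW)$ is available as a bicategory with a compatible shadow, the argument is formal. The real work is concentrated in \autoref{thm:derivbicat-prof}: because composition in $\prof(\dW)$ is assembled from homotopy coends, one must construct coherent associativity and unit isomorphisms and verify the cyclicity isomorphisms for the shadow, and it is here that the derivator formalism earns its keep by organizing the fibrant and cofibrant replacements hidden inside those homotopy colimits. Granting \autoref{thm:derivbicat-prof}, the proof reduces to the bookkeeping above.
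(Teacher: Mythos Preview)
Your proposal is correct and follows exactly the approach the paper indicates: the theorem is cited from \cite[Theorem~11.13]{PS5} without proof here, but the paper explicitly remarks (at the start of Part~\ref{part:derivators}) that \autoref{thm:dercomposite}, \autoref{thm:compose-traces2}, and \autoref{thm:compose-traces3} are all instances of \cite[Prop.~7.5]{PS2}, and \autoref{thm:bicatderivlinearity} is no different---once \autoref{thm:derivbicat-prof} supplies the shadowed bicategory $\prof(\dW)$, the result is the untwisted case of that proposition with $g=\id_\Phi$. Your observation that the substantive content lies in \autoref{thm:derivbicat-prof} rather than in the final diagram chase is also in line with how the paper frames the situation.
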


\begin{lem}[{\cite[Lemma 11.15]{PS5}}]  \label{thm:derbicatomega}
  If $X\in\dW(R,S)(A)$ is pointwise dualizable and $f\colon X\to X$, then for any conjugacy class $[a\xto{\alpha} a]$ in $A$, the composite
  \begin{equation}
    \xymatrix{ \sh{R} \ar[r]^-{[\alpha]} & \sh{(A,R)} \ar[r]^-{\tr(f)} & \sh{S} }
  \end{equation}
  is equal to the trace in $\dW(R,S)(\bbone)$ of the composite
  \begin{equation}
    \xymatrix{ X_a \ar[r]^-{X_\alpha} & X_a \ar[r]^-{f_a} & X_a .}
  \end{equation}
\end{lem}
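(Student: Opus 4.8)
The plan is to reduce the lemma to the general composition theorem for traces in a derivator bicategory, by exhibiting $X_a \xto{X_\alpha} X_a \xto{f_a} X_a$ as a weighted colimit built from a \emph{representable} weight. Fix a representative $\alpha\colon a\to a$ of the given conjugacy class and let $\Phi_a\in\dW(R,R)(A\op)$ be the coherent diagram whose value at $a'$ is the coproduct of $\hom_A(a',a)$ copies of the unit $\lI_R$; we regard it as a $1$-cell from $(\bbone,R)$ to $(A,R)$ in $\prof(\dW)$. It is the restriction of the profunctor unit $\lI_{(A,R)}$ along the inclusion of $a$ in its second variable. Precomposition with $\alpha$ gives an endomorphism $\omega_\alpha\colon\Phi_a\to\Phi_a$, just as in the sketch of \autoref{thm:smpointwise-trace}.

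The first and principal task is to show that $\Phi_a$ is absolute. Its right dual should be the diagram $\rdual{\Phi_a}\in\dW(R,R)(A)$ whose value at $a'$ is the coproduct of $\hom_A(a,a')$ copies of $\lI_R$ (the restriction of $\lI_{(A,R)}$ in the other variable). The coevaluation $\eta\colon\lI_{(A,R)}\to\Phi_a\odot\rdual{\Phi_a}$ is induced by inserting identity arrows and the evaluation $\epsilon\colon\rdual{\Phi_a}\odot_A\Phi_a\to\lI_{(\bbone,R)}=\lI_R$ by composing arrows; these are the evident lifts to $\dW(R,R)$ of the maps written down in the sketch of \autoref{thm:smpointwise-trace}. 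Each triangle identity then unwinds into a statement about homotopy coends of free diagrams indexed by comma categories of $A$, and the comparison maps in question are equivalences because those comma categories carry initial or terminal objects. This is exactly the place where a model-categorical presentation would require cofibrant replacements, and I expect making these ``explicit descriptions of the coevaluation and evaluation'' genuinely rigorous in the derivator language to be the main obstacle.

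Next I would record the two identifications that make $\Phi_a$ do its work. Weighting by a representable is evaluation: $\Phi_a\odot_A X\simeq X_a$ by the (cocontinuous, derivator-bicategorical) co-Yoneda lemma, and under this equivalence $\id_{\Phi_a}\odot_A f$ becomes $f_a$ while $\omega_\alpha\odot_A\id_X$ becomes $X_\alpha$, so that $\omega_\alpha\odot_A f$ becomes the composite $f_a\circ X_\alpha$. A short unwinding of the definition of the trace, using the explicit $\eta$ and $\epsilon$ above, identifies $\tr(\omega_\alpha)$ with the map $[\alpha]\colon\sh{R}=\sh{(\bbone,R)}\to\sh{(A,R)}$ that picks out the class of $\alpha$, mirroring the computation of $\tr(\omega_\alpha)$ in the sketch of \autoref{thm:bipointwise-trace}.

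Finally I would invoke the general form of \autoref{thm:bicatderivlinearity} — the composition theorem $\tr(g\odot f)=\tr(f)\circ\tr(g)$ allowing a nontrivial endomorphism $g$ of the first factor, as in \autoref{thm:compose-traces3} (ultimately \cite[Prop.~7.5]{PS2}) — applied to $\Phi_a$ equipped with $\omega_\alpha$ and to $X$ equipped with $f$. Since $X$ is pointwise dualizable, $X_a\simeq\Phi_a\odot_A X$ is dualizable, so the leftmost trace below is defined, and we get
\[ \tr(f_a\circ X_\alpha)=\tr(\omega_\alpha\odot_A f)=\tr(f)\circ\tr(\omega_\alpha)=\tr(f)\circ[\alpha], \]
which is the asserted equality of maps $\sh{R}\to\sh{S}$. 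The right-hand side depends only on the conjugacy class of $\alpha$, so the choice of representative was harmless.
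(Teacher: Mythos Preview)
Your approach is exactly the one the paper uses in its sketch proofs of the analogous \autoref{thm:smpointwise-trace} and \autoref{thm:bipointwise-trace}: exhibit the representable $\Phi_a$ as an absolute weight with dual $\rdual{\Phi_a}$, identify $\Phi_a\odot_A X\simeq X_a$ and $\omega_\alpha\odot_A f$ with $f_a\circ X_\alpha$, compute $\tr(\omega_\alpha)=[\alpha]$, and conclude by the composition theorem for traces. (The paper gives no separate proof for \autoref{thm:derbicatomega}, deferring to \cite{PS5}, but the intended argument is the same.)

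One slip to fix: you have the types of $\eta$ and $\epsilon$ reversed. Since $\Phi_a$ is a $1$-cell $(\bbone,R)\to(A,R)$, the coevaluation is $\eta\colon \lI_{(\bbone,R)}=\lI_R\to \Phi_a\odot_A\rdual{\Phi_a}$ (inclusion of $\id_a$), while the evaluation is $\epsilon\colon \rdual{\Phi_a}\odot_{\bbone}\Phi_a\to \lI_{(A,R)}$ (composition $\hom_A(a,a'')\times\hom_A(a',a)\to\hom_A(a',a'')$). Your verbal descriptions (``inserting identities'' and ``composing arrows'') are correct; only the displayed domains and codomains are swapped.
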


\section{Indexed monoidal derivators}
\label{sec:indexed}

We now introduce an intermediate structure which we will use to construct the derivator bicategory of parametrized spectra.
Recall that if $\bS$ is a category, an $\bS$-\textbf{indexed category} is a pseudofunctor $\sC$ from $\bS\op$ to $\cCAT$, and an $\bS$-\textbf{indexed monoidal category} is a pseudofunctor from $\bS\op$ to $\cMONCAT$.
Thus, for each each object $R\in \bS$ we have a (monoidal) category $\sC^R$, and for each morphism $f\colon R\to S$ in $\bS$ we have a (monoidal) functor $\pb{f}\colon \sC^S \to \sC^R$.

In an indexed monoidal category, we write the tensor product in each fiber $\sC^R$ as $\otimes$, or $\otimes_R$ for emphasis if needed.
If \bS has finite products, as we will henceforth assume, then there is also an \emph{external} tensor product
$\boxtimes \colon  \sC^R \times \sC^S \to \sC^{R\times S}$
defined by
\[ X \boxtimes Y = \pi_S^* X \otimes_{R\times S} \pi_R^*Y. \]
The external product is associative and unital in a suitable sense, and we can recover the fiberwise products from it as $X\otimes_R Y = (\Delta_R)^*(X\boxtimes Y)$;
see~\cite{shulman:frbi} (where unfortunately the meanings of $\otimes$ and $\boxtimes$ are reversed).

In order to define a bicategory from an indexed monoidal category, we need a little more structure.

\begin{defn}
  We say that \sC\ has \textbf{\bS-indexed coproducts} if
  \begin{enumerate}
  \item Each reindexing functor $\pb{f}$ has a left adjoint $\pf{f}$, and 
  \item For any pullback square
    \[\xymatrix@-.5pc{A \ar[r]^f\ar[d]_h & B \ar[d]^g\\
      C\ar[r]_k & D}\]
    in \bS, the composite
    \[\pf{f} \pb{h} \too \pf{f}\pb{h}\pb{k}\pf{k} \too[\iso] \pf{f}\pb{f}\pb{g}\pf{k} \too \pb{g}\pf{k}\]
    is an isomorphism (the \emph{Beck-Chevalley condition}).
  \end{enumerate}
  If \sC\ is symmetric monoidal, we say that \textbf{$\ten$ preserves
    indexed coproducts} (in each variable separately), or that the
  \textbf{projection formula holds}, if
  \begin{enumerate}[resume]
  \item for any $f\maps A\to B$ in \bS\ and any $M\in\sC^B$,
    $N\in\sC^A$, the canonical map
    \[\pf{f}(\pb{f}M \otimes N) \to \pf{f}(\pb{f}M \otimes \pb{f} \pf{f} N)
    \cong \pf{f} \pb{f} (M \otimes \pf{f} N) \to M \otimes \pf{f}N
    \]
    is an isomorphism.
      \label{item:prescoprod}
  \end{enumerate}
\end{defn}

\begin{lem}[{\cite[Lemma~3.2]{PS3}}]\label{thm:altproj}
  The projection formula is equivalent to asking that for $f\maps A\to B$, $M\in \sC^C$, and $N\in\sC^A$, the canonical map
  \begin{align}
    \pf{(\id_C\times f)}(M\boxtimes N)
    &\to \pf{(\id_C\times f)}(M\boxtimes \pb{f}\pf{f}N)\\
    &\cong \pf{(\id_C\times f)}\pb{(\id_C\times f)}(M\boxtimes \pf{f}N)\\
    &\to M\boxtimes \pf{f}N
  \end{align}
  is an isomorphism.
\end{lem}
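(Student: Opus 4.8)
The plan is to prove the two implications separately. In each direction the content is to convert one form of the projection formula into the other by inserting a single Beck--Chevalley isomorphism (conditions~(i) and~(ii) of the definition of $\bS$-indexed coproducts being available throughout) and using that every reindexing functor $\pb{g}$ is strong monoidal.

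\emph{From the projection formula to the external statement.} Given $f\maps A\to B$, $M\in\sC^C$, and $N\in\sC^A$, set $g=\id_C\times f\maps C\times A\to C\times B$ and let $p\maps C\times A\to C$, $q\maps C\times A\to A$, $p'\maps C\times B\to C$, $q'\maps C\times B\to B$ be the projections, so that $p=p'g$ and $q'g=fq$. Since $\pb{g}$ is strong monoidal, $M\boxtimes N=\pb{p}M\otimes\pb{q}N\iso\pb{g}\pb{p'}M\otimes\pb{q}N$, and applying the projection formula to $g$, with $\pb{p'}M\in\sC^{C\times B}$ and $\pb{q}N\in\sC^{C\times A}$, produces a canonical isomorphism $\pf{g}(M\boxtimes N)\iso\pb{p'}M\otimes\pf{g}\pb{q}N$. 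Because the square
\[\xymatrix@-.5pc{C\times A\ar[r]^-{g}\ar[d]_-{q} & C\times B\ar[d]^-{q'}\\ A\ar[r]_-{f} & B}\]
is a pullback, Beck--Chevalley identifies $\pf{g}\pb{q}N$ with $\pb{q'}\pf{f}N$; substituting and recognising $\pb{p'}M\otimes\pb{q'}\pf{f}N$ as $M\boxtimes\pf{f}N$ gives $\pf{g}(M\boxtimes N)\iso M\boxtimes\pf{f}N$, and a diagram chase shows this composite is the canonical comparison map appearing in the statement.

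\emph{From the external statement to the projection formula.} Given $f\maps A\to B$, $M\in\sC^B$, and $N\in\sC^A$, apply the external statement with $C=B$ to obtain $\pf{(\id_B\times f)}(M\boxtimes N)\iso M\boxtimes\pf{f}N$ in $\sC^{B\times B}$, and restrict along the diagonal $\Delta_B\maps B\to B\times B$. On the one hand $\pb{\Delta_B}(M\boxtimes\pf{f}N)=M\otimes_B\pf{f}N$ by the defining relation between $\boxtimes$ and $\otimes$. On the other hand, the square
\[\xymatrix@-.5pc{A\ar[r]^-{f}\ar[d]_-{(f,\id_A)} & B\ar[d]^-{\Delta_B}\\ B\times A\ar[r]_-{\id_B\times f} & B\times B}\]
exhibits $A$ as the pullback of $\id_B\times f$ along $\Delta_B$, so Beck--Chevalley gives $\pb{\Delta_B}\pf{(\id_B\times f)}\iso\pf{f}(f,\id_A)^{\ast}$; and since the two composites of $(f,\id_A)$ with the projections of $B\times A$ are $f$ and $\id_A$, strong monoidality yields $(f,\id_A)^{\ast}(M\boxtimes N)\iso\pb{f}M\otimes N$. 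Putting the two computations together gives $\pf{f}(\pb{f}M\otimes N)\iso M\otimes_B\pf{f}N$, which is the projection formula.

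Every step above is a manipulation of canonical comparison maps, so I expect the only real work to be coherence bookkeeping: checking that the isomorphism produced in each direction is literally \emph{the} canonical map of the relevant diagram and not merely \emph{some} isomorphism. Concretely this requires verifying that the mates of the unit and associativity constraints, together with the compatibility of Beck--Chevalley transformations with pasting of squares, combine correctly; this is routine but must be handled with care, and the point that most needs attention is the correct orientation of the two pullback squares above.
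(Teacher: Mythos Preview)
Your argument is correct. The paper does not actually prove this lemma; it merely records the statement and cites \cite[Lemma~3.2]{PS3}. Your proof is the expected one and is essentially what appears in that reference: in each direction one inserts a single Beck--Chevalley isomorphism (for the product-projection square in the forward direction, and for the graph/diagonal square in the reverse direction) and uses strong monoidality of the reindexing functors to pass between $\boxtimes$ and $\otimes$. Both squares you invoke are of the form displayed in~\eqref{eq:hopb} (the first is the left-hand type with $D=\star$, the second is the middle type up to a swap of factors), so the argument goes through even under the weaker ``homotopy'' Beck--Chevalley hypotheses used later in the paper. Your closing caveat about coherence is appropriate but not a gap: the required compatibilities are instances of the pasting law for mates and the pseudofunctoriality of $\pb{(-)}$, and they hold on the nose.
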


In \cite{shulman:frbi} it is shown that if $\sC$ is an indexed monoidal category with indexed coproducts preserved by $\ten$, then there is a bicategory $\calDi \sC\bS$, whose 0-cells are the objects of $\bS$, and whose hom-categories are
$\calDi \sC\bS(R,S)=\sC_{R\times S}$.  The compositions and units are defined for $M\in \calDi \sC\bS(R,S)$ and $N\in \calDi \sC\bS(S,T)$ by
\begin{align*}
  M\odot N &= \pf{(\id_R\times\pi_S\times\id_T)}
  \pb{(\id_R\times \Delta_S\times\id_T)}(M \boxtimes N) \quad\text{and}\\
  \lI_R &= \pf{(\Delta_R)} \pb{\pi_R} (\lS)
\end{align*}
In~\cite{PS3} it is shown that if $\sC$ is symmetric, then $\calDi \sC\bS$ has a shadow with values in $\sC^\star$, where $\star$ is the terminal object of $\bS$, defined for $M\in \calDi \sC\bS(R,R)$ by
\[ \sh{M} = \pf{(\pi_R)}\pb{(\Delta_R)} M.\]

Unfortunately, the examples arising in homotopy theory do not always satisfy the Beck-Chevalley condition for all pullback squares in \bS.
However, they do satisfy it for squares of the following form (which are all automatically pullbacks in any category with products):
\begin{equation}\label{eq:hopb}
  \vcenter{\xymatrix{
      A\times C\ar[r]^-{f\times \id}\ar[d]_-{\id\times g} &
      B\times C\ar[d]^-{\id\times g} &
      A\ar[r]^-{(\id,f)}\ar[d]_f &
      A\times B\ar[d]^-{f\times \id} &
      A\ar[r]^-{\Delta}\ar[d]_-{\Delta} &
      A\times A\ar[d]^-{\id\times \Delta}\\
      A\times D\ar[r]_-{f\times \id} &
      B\times D &
      B\ar[r]_-{\Delta_B} &
      B\times B &
      A\times A\ar[r]_-{\Delta \times \id} &
      A\times A \times A
      }}
\end{equation}
In~\cite{PS3} we defined an indexed category to have \textbf{indexed homotopy coproducts} if each $\pb{f}$ has a left adjoint $\pf{f}$ satisfying the Beck-Chevalley condition for pullback squares of these three forms, along with any square obtained from these by taking cartesian products with a fixed object, and showed that this suffices for the construction of a bicategory.
We will refer to these special pullback squares as \textbf{homotopy pullback squares}; this is not an egregious abuse of terminology since all such squares \emph{are} homotopy pullback squares in any reasonable homotopy theory (for instance, in any derivator).

To generalize this construction to derivators, we start by defining an \textbf{$\bS$-indexed monoidal derivator} to be a pseudofunctor
\[\dV \colon \bS\op \to \cMONDER_{\mathrm{cc}},\]
where $\cMONDER_{\mathrm{cc}}$ is the 2-category of monoidal derivators and cocontinuous strong monoidal morphisms~\cite[\S3]{gps:additivity}.
Thus, associated to each object $R$ of $\bS$ we have a monoidal derivator denoted by $\dV^R$ and for each morphism $f\colon R\to S$ in $\bS$ 
we have a cocontinuous strong monoidal morphism of derivators
\[\dV^S\to \dV^R\] denoted by 
$\pb{f}$. 
We say that \dV is \textbf{symmetric} if each $\dV^R$ is a symmetric monoidal derivator and each $\pb{f}$ is a symmetric monoidal morphism.

If $A$ is a small category, we denote the value of the derivator $\dV^R$ on $A$ by $\dV^R(A)$.  
For each small category $A$, if we define  $(\dV^{-}(A))^R=\dV^R(A)$, we have an ordinary $\bS$-indexed monoidal category $\dV^{-}(A)$.
Thus, we can also think of an $\bS$-indexed monoidal derivator as a derivator consisting of $\bS$-indexed monoidal categories.

\begin{defn}\label{thm:derindcoprod}
  An $\bS$-indexed derivator \dV has \textbf{$\bS$-indexed homotopy coproducts} if
  \begin{enumerate}
  \item each morphism of derivators $\pb{f} \colon \dV^S \to \dV^R$ has a left adjoint $\pf{f}$, and\label{item:dic1}
  \item these adjoints satisfy the Beck-Chevalley condition for homotopy pullback squares in $\bS$.\label{item:dic2} 
  \end{enumerate}
  If $\dV$ is additionally monoidal, we say that its indexed coproducts are \textbf{preserved by $\ten$}, or that the \textbf{projection formula} holds, if it holds levelwise for each $\dV^{-}(A)$.
\end{defn}

\autoref{thm:derindcoprod}\ref{item:dic2} is equivalent to asking that that for each $A\in\cCat$, the indexed monoidal category $\dV^{-}(A)$ has $\bS$-indexed homotopy coproducts.

\begin{thm}\label{thm:der-bicat-indexed}
From an $\bS$-indexed monoidal derivator \dV with indexed homotopy coproducts preserved by $\ten$, we can define a derivator bicategory $\calDi \dV\bS$ whose 0-cells are the objects of \bS, with
  \[\calDi \dV\bS(R,S) = \dV^{R\times S},
  \]
  and with composition and units defined by
  \begin{align*}
    M\odot N &= \pf{(\id_R\times\pi_S\times\id_T)}
    \pb{(\id_R\times \Delta_S\times\id_T)}(M \boxtimes N) \quad\text{and}\\
    \lI_R &= \pf{(\Delta_R)} \pb{\pi_R} (U)
  \end{align*}
  Moreover, if \dV is symmetric, then $\calDi \dV\bS$ has a shadow with values in $\dV^\star$, defined by
  \[ \sh{M} = \pf{(\pi_R)}\pb{(\Delta_R)} M.\]
\end{thm}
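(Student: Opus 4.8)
The plan is to bootstrap from the one-categorical construction of \cite{shulman:frbi,PS3}, applied one small category at a time. The key observation, already made in the discussion preceding \autoref{thm:derindcoprod}, is that for each $A\in\cCat$ the assignment $(\dV^{-}(A))^R\coloneqq\dV^R(A)$ defines an ordinary $\bS$-indexed monoidal category $\dV^{-}(A)$, and that the hypothesis on $\dV$ says exactly that each $\dV^{-}(A)$ has $\bS$-indexed homotopy coproducts preserved by $\ten$. Hence \cite{PS3} produces, for every $A$, a bicategory $\calDi{\dV^{-}(A)}\bS$ with $0$-cells the objects of $\bS$, hom-categories $\dV^{R\times S}(A)$, and composition, units, and---when $\dV$ is symmetric---a shadow valued in $\dV^\star(A)$, all given by the displayed formulas evaluated at $A$; in particular every bicategory and shadow axiom holds at every level $A$. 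The whole task is then to check that these levelwise bicategories assemble into a derivator bicategory in the sense of \autoref{sec:deriv-bicat}.

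First I would note that the hom-derivators require nothing: $\calDi\dV\bS(R,S)\coloneqq\dV^{R\times S}$ is a derivator by hypothesis. For the composition I would observe that
\[\odot \;=\; \pf{(\id_R\times\pi_S\times\id_T)}\circ\pb{(\id_R\times\Delta_S\times\id_T)}\circ\boxtimes,\]
where $\boxtimes$ is in turn built from the reindexings $\pb{\pi_R}$, $\pb{\pi_S}$ and the fibrewise tensor $\otimes$. Each factor is a morphism of derivators---$\pb{f}$ because $\dV$ takes values in $\cMONDER_{\mathrm{cc}}$, $\otimes$ because $\dV^R$ is a monoidal derivator, and $\pf{f}$ as the left adjoint of $\pb{f}$ supplied by \autoref{thm:derindcoprod}\ref{item:dic1}---so $\odot$ is one; and each factor is cocontinuous ($\pb{f}$ by hypothesis, $\otimes$ as part of the monoidal derivator structure, $\pf{f}$ as a left adjoint), so $\odot$ is cocontinuous in each variable. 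The same reasoning exhibits the unit $\lI_R=\pf{(\Delta_R)}\pb{\pi_R}(U)$ as a morphism of derivators $\tc\to\dV^{R\times R}$---the image under the cocontinuous morphism $\pf{(\Delta_R)}\pb{\pi_R}$ of the morphism $\tc\to\dV^\star$ that picks out the monoidal unit $U$---and the shadow $\sh{-}=\pf{(\pi_R)}\pb{(\Delta_R)}\colon\dV^{R\times R}\to\dV^\star$ as a composite of $\pb{(\Delta_R)}$ and $\pf{(\pi_R)}$, hence a cocontinuous morphism of derivators.

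Next I would produce the coherence $2$-cells. At level $A$ I would take the associativity, unit, and cyclicity constraints to be those of $\calDi{\dV^{-}(A)}\bS$ from \cite{PS3}; inspecting that reference, these are assembled solely from the (co)units of the adjunctions $\pf{f}\dashv\pb{f}$, the strong-monoidality isomorphisms of the $\pb{f}$, the projection-formula isomorphisms, the functoriality constraints $\pb{g}\circ\pb{f}\cong\pb{(fg)}$, and the Beck--Chevalley isomorphisms for homotopy pullback squares. All of these are $2$-cells in the $2$-category of (monoidal) derivators and so are compatible with restriction along any $u\colon A\to B$---which is exactly what it means for the resulting constraints to be invertible modifications. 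The pentagon, triangle, and shadow-compatibility axioms are identities of modifications, and such an identity holds as soon as it holds at every level $A$, which it does by \cite{PS3}. This produces the derivator bicategory $\calDi\dV\bS$, with its shadow valued in $\dV^\star$ when $\dV$ is symmetric.

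The hard part will be the bookkeeping behind the previous paragraph: one must verify that \emph{every} Beck--Chevalley and mate manipulation in the one-categorical argument of \cite{shulman:frbi,PS3} remains compatible with the reindexing functors $u^*$ and the homotopy Kan extensions $u_!$, $u_*$. This is a routine, if lengthy, exercise in the calculus of mates internal to $\cMONDER_{\mathrm{cc}}$; the one genuinely new ingredient over \cite{PS3}---that the $\pf{f}$ are themselves morphisms of derivators satisfying Beck--Chevalley at the level of derivators for homotopy pullback squares---has been packaged into \autoref{thm:derindcoprod} precisely so as to be available here, after which the argument of \cite{PS3} transports essentially verbatim with $\cMONDER_{\mathrm{cc}}$ in place of $\cMONCAT$.
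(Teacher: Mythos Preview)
Your proof is correct and follows essentially the same strategy as the paper's: apply the one-categorical construction of \cite{shulman:frbi,PS3} levelwise at each $A\in\cCat$, then verify that the pieces assemble into a derivator bicategory with the required cocontinuity properties. The paper streamlines the assembly step by first reformulating a derivator bicategory as a pseudofunctor from $\cCat\op$ into the $2$-category of bicategories, identity-on-objects pseudofunctors, and \emph{icons} in the sense of \cite{lack:icons}; once this is in hand, the $2$-functoriality of the construction $\sC\mapsto\calDi\sC\bS$ from \cite{shulman:frbi} automatically yields such a pseudofunctor, and one only has to check afterwards that each hom is a derivator and that $\odot$ is cocontinuous in each variable. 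Your explicit verification that the levelwise coherence constraints are modifications is exactly the componentwise content of that $2$-functoriality, so the two arguments differ only in packaging: the icon reformulation absorbs your ``bookkeeping'' paragraph into a single appeal to functoriality, while your version makes the individual ingredients (adjunction units, Beck--Chevalley isomorphisms, projection formula) visible.
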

\begin{proof}
  Note that for any derivator bicategory $\dW$ and any $A\in \cCat$, we have an ordinary bicategory $\dW(A)$, with hom-categories $\dW(A)(R,S) = \dW(R,S)(A)$.
  (The case $A=\bbone$ yields the underlying ordinary bicategory of $\dW$.)
  Similarly, for any $u\colon A\to B$ in $\cCat$, we have an induced pseudofunctor $\dW(B) \to \dW(A)$ that is the identity on objects, and for any natural transformation in $\cCat$ we have an \emph{icon} in the sense of~\cite{lack:icons}.
  Together these fit together into a pseudofunctor from $\cCat\op$ to the 2-category of bicategories, identity-on-objects pseudofunctors, and icons, which encapsulates all the data of $\dW$.
  Thus, a derivator bicategory can equivalently be defined as such a pseudofunctor such that each induced pseudofunctor $\dW(R,S)$ is a derivator and the composition morphisms are cocontinuous in each variable.

  Now we have observed above that an $\bS$-indexed monoidal derivator $\dV$ can be regarded as a pseudofunctor from $\cCat\op$ to $\bS$-indexed monoidal categories.
  Thus, the functoriality of the construction in~\cite{shulman:frbi} automatically yields all the data of a derivator bicategory.
  Moreover, each $\dV^{R\times S}$ is a derivator by assumption, while cocontinuity of composition follows from the cocontinuity of the fiberwise monoidal structures (as assumed in the definition of monoidal derivator), the assumed cocontinuity of the morphisms $\pb{f}$, and the fact that $\pf{f}$ is cocontinuous as it is a left adjoint~\cite[Prop.~2.9]{groth:ptstab}.
  Thus, we have a derivator bicategory.

  The construction of shadows in~\cite{PS3} applied at each $A\in\cCat$ yields functors $\calDi\dV\bS(R,S)(A) \to \dV^\star(A)$ that satisfy the shadow axioms, so it remains to show that these assemble into cocontinuous morphisms of derivators.
  However, this is immediate since $\pb{(\Delta_R)}$ and $\pf{(\pi_R)}$ are such.
\end{proof}

\section{Closed structures}\label{sec:indclosed}
We now investigate closedness of $\calDi \dV\bS$, which requires further assumptions on $\dV$.
We begin with the corresponding structure for ordinary indexed categories.

\begin{defn}\label{def:indexed-products}
  An \bS-indexed category \sC\ has \textbf{\bS-indexed (homotopy) products} if each reindexing functor $\pb{f}$ has a \emph{right} adjoint $\copf{f}$ that satisfies the Beck-Chevalley condition for all (homotopy) pullback squares in \bS.
\end{defn}

A standard calculation with mates implies that if the functors $\pb{f}$ have both left and right adjoints, then the left adjoints satisfy the Beck-Chevalley condition if and only if the right adjoints do.

\begin{defn}
  An \bS-indexed symmetric monoidal category \sC is \textbf{closed} if
  \begin{enumerate}
  \item each monoidal category $\sC^R$ is closed, with internal-homs $\RHD$, and
  \item the reindexing functors $\pb{f}$ are closed monoidal, i.e.\ for $f\colon R\to S$ and $M,N\in\sC^S$ the canonical map
    \begin{align}
      \pb{f}(M\RHD N)
      &\to \pb{f}M \RHD (\pb{f}(M\RHD N) \otimes \pb{f} M)\\
      &\cong \pb{f}M \RHD \pb{f}((M\RHD N) \otimes M)\\
      &\to (\pb{f}M \RHD \pb{f} N)
    \end{align}
    is an isomorphism.
  \end{enumerate}
\end{defn}

Recall that we assume \bS has finite products.

\begin{lem}\label{thm:intextclosed}
  If \sC has \bS-indexed homotopy products, it is closed if and only if
  \begin{enumerate}
  \item each external product functor $\boxtimes\colon \sC^R\times \sC^S \to \sC^{R\times S}$ is part of a two-variable adjunction, with adjoints
    $\xrhd\colon (\sC^S)\op \times \sC^{R\times S} \to \sC^R$, and\label{item:closed1}
  \item for each $f\colon R\to T$, $M\in\sC^S$, and $N\in\sC^{T\times S}$, the canonical map
    \begin{align}
      \pb{f}(M\xrhd N)
      \to (M \xrhd \pb{(f\times \id_S)} N)
    \end{align}
    is an isomorphism.\label{item:closed2}
  \end{enumerate}
\end{lem}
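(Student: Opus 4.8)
The plan is to prove both implications by chaining the fiberwise tensor--hom adjunctions with the Beck--Chevalley isomorphisms for the three distinguished square shapes in \eqref{eq:hopb}, in exact parallel --- via the calculus of mates --- with the proof of \autoref{thm:altproj}. Throughout, the standing hypothesis of \bS-indexed homotopy products supplies a right adjoint $\copf{h}$ to every $\pb{h}$, so all the right adjoints appearing below exist; Beck--Chevalley will be invoked only for squares of the three listed forms.

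For the forward direction, assume \sC\ is closed, with fiberwise internal-homs $\RHD$. For $M\in\sC^S$ and $N\in\sC^{R\times S}$ I would set $M\xrhd N\coloneqq\copf{\pi_S}(\pb{\pi_R}M\RHD N)$, and then, using $X\boxtimes M=\pb{\pi_S}X\otimes\pb{\pi_R}M$, produce a chain of natural isomorphisms
\[ \sC^{R\times S}(X\boxtimes M,N)\iso\sC^{R\times S}(\pb{\pi_S}X,\pb{\pi_R}M\RHD N)\iso\sC^R(X,M\xrhd N), \]
where the first step is closedness of $\sC^{R\times S}$ and the second is the adjunction $\pb{\pi_S}\adj\copf{\pi_S}$. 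Symmetry of \sC\ then supplies the partner adjoint, so that $\boxtimes$ is part of a two-variable adjunction and \ref{item:closed1} holds. For \ref{item:closed2}, the relevant square has horizontal maps $f\times\id_S$ and $f$ and vertical maps the projections $\pi_S$, so it is of the first type in \eqref{eq:hopb} with the cancelled factor taken to be the terminal object; Beck--Chevalley yields $\pb{f}\copf{\pi_S}\iso\copf{\pi_S}\pb{(f\times\id_S)}$, and combining this with $\pb{(f\times\id_S)}\pb{\pi_R}=\pb{\pi_R}$ and the hypothesis that $\pb{(f\times\id_S)}$ is closed monoidal rewrites $\pb{f}(M\xrhd N)$ as $M\xrhd\pb{(f\times\id_S)}N$; it then remains to see that this is the canonical comparison map.

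For the converse, assume \ref{item:closed1} and \ref{item:closed2}. For $M,N\in\sC^R$ I would define $M\RHD N\coloneqq M\xrhd\copf{\Delta_R}N$; since $X\otimes_R M=\pb{\Delta_R}(X\boxtimes M)$, chaining $\pb{\Delta_R}\adj\copf{\Delta_R}$ with \ref{item:closed1} gives $\sC^R(X\otimes_R M,N)\iso\sC^R(X,M\RHD N)$, so every fiber is closed. To check that $\pb{g}$ is closed monoidal for $g\colon R\to S$, I would first apply \ref{item:closed2} (with $f=g$) to rewrite $\pb{g}(M\xrhd\copf{\Delta_S}N)$ as $M\xrhd\pb{(g\times\id_S)}\copf{\Delta_S}N$; then Beck--Chevalley for a square of the second type in \eqref{eq:hopb} gives $\pb{(g\times\id_S)}\copf{\Delta_S}\iso\copf{(\id_R,g)}\pb{g}$; and finally, factoring $(\id_R,g)=(\id_R\times g)\circ\Delta_R$ and invoking the auxiliary identity
\[ M\xrhd\copf{(\id_R\times g)}W\;\iso\;\pb{g}M\xrhd W\qquad(W\in\sC^{R\times R}), \]
which follows from the Yoneda lemma once one computes $\pb{(\id_R\times g)}(X\boxtimes M)=X\boxtimes\pb{g}M$ using the strong monoidality of reindexing, one collapses the expression to $\pb{g}M\xrhd\copf{\Delta_R}\pb{g}N=\pb{g}M\RHD\pb{g}N$.

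The main obstacle I anticipate is bookkeeping rather than conceptual: identifying which of the three square shapes in \eqref{eq:hopb} governs each instance where a reindexing functor must be pushed past some $\copf{h}$, and --- more delicately --- verifying that the composite natural isomorphism one obtains is literally the canonical comparison morphism named in the statement (and in the definition of ``closed''), not merely an abstract isomorphism. As with \autoref{thm:altproj}, this is handled by tracking units and counits through each chain, with no new idea needed beyond the mates calculus.
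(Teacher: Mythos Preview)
Your argument is correct and is essentially what the paper's proof amounts to: the paper simply cites \cite[Propositions~13.9 and~13.15]{shulman:frbi} and remarks that indexed \emph{homotopy} (co)products suffice, and your construction of $\xrhd$ from $\RHD$ (and conversely) together with the Beck--Chevalley manipulations is exactly the content of those propositions, carried out with the weaker hypothesis. Your closing caveat about checking that the composite isomorphism is the canonical comparison map is well taken, but as you say this is routine mates bookkeeping and poses no real obstacle.
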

\begin{proof}
  This is essentially~\cite[Propositions 13.9 and 13.15]{shulman:frbi}; the only difference is the Beck-Chevalley conditions assumed.
  Proposition 13.15 of \textit{ibid} assumes indexed products (``a strongly BC $*$-fibration'') for one direction of the equivalence and a weaker form of indexed homotopy coproducts (``a weakly BC $*$-fibration'') for the other, but in fact indexed homotopy coproducts suffice for both directions.
\end{proof}

\begin{lem}\label{thm:closed-proj}
  If \sC has \bS-indexed homotopy products and coproducts and satisfies \autoref{thm:intextclosed}\ref{item:closed1}, then it satisfies \autoref{thm:intextclosed}\ref{item:closed2} (hence is closed) if and only if its indexed coproducts are preserved by $\ten$.
\end{lem}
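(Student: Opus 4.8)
The plan is to recognise the canonical map of \autoref{thm:intextclosed}\ref{item:closed2} as the mate of the projection-formula comparison map, so that two applications of the Yoneda lemma force one to be invertible exactly when the other is; this runs along the same lines as the arguments in \cite{shulman:frbi}. Fix $f\colon R\to T$ in $\bS$ and $M\in\sC^S$. By \autoref{thm:intextclosed}\ref{item:closed1} the functor $-\boxtimes M$ (in the relevant fibers) has right adjoint $M\xrhd-$, and by the hypothesis of indexed homotopy coproducts we have $\pf{f}\dashv\pb{f}$ and $\pf{(f\times\id_S)}\dashv\pb{(f\times\id_S)}$. In this notation, the projection formula — in the form given by \autoref{thm:altproj}, after using the symmetry of $\boxtimes$ to interchange the two external factors — asserts exactly that the canonical maps
\[ \mathsf{pr}_P\colon \pf{(f\times\id_S)}(P\boxtimes M)\longrightarrow (\pf{f}P)\boxtimes M,\qquad P\in\sC^R, \]
are isomorphisms, while \autoref{thm:intextclosed}\ref{item:closed2} asserts that the canonical maps
\[ \mathsf{cl}_N\colon \pb{f}(M\xrhd N)\longrightarrow M\xrhd\pb{(f\times\id_S)}N,\qquad N\in\sC^{T\times S}, \]
are isomorphisms.

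The first step is to verify that $\mathsf{cl}$ is the mate of $\mathsf{pr}$. Composing the three adjunctions above yields, for each $P\in\sC^R$ and $N\in\sC^{T\times S}$, natural bijections
\[ \Hom_{\sC^R}\big(P,\,\pb{f}(M\xrhd N)\big)\;\cong\;\Hom_{\sC^{T\times S}}\big((\pf{f}P)\boxtimes M,\,N\big) \]
and
\[ \Hom_{\sC^R}\big(P,\,M\xrhd\pb{(f\times\id_S)}N\big)\;\cong\;\Hom_{\sC^{T\times S}}\big(\pf{(f\times\id_S)}(P\boxtimes M),\,N\big), \]
under which postcomposition with $\mathsf{cl}_N$ corresponds to precomposition with $\mathsf{pr}_P$. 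The one genuinely fiddly point is checking that the map extracted this way agrees with the concretely displayed ``canonical map'' of \autoref{thm:intextclosed}\ref{item:closed2}; this is a routine unwinding of the units and counits of the adjunctions involved, carried out just as in \cite{shulman:frbi}, and I expect this bookkeeping — rather than anything conceptual — to be the main obstacle.

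Granting that identification, the equivalence is forced by Yoneda. If the projection formula holds, then every $\mathsf{pr}_P$ is invertible, so precomposition with $\mathsf{pr}_P$ is a bijection, so $\mathsf{cl}_N$ induces an isomorphism on $\Hom_{\sC^R}(P,-)$ for all $P$, hence $\mathsf{cl}_N$ is an isomorphism; as this holds for all $N$, \autoref{thm:intextclosed} (together with the standing assumption \ref{item:closed1}) shows $\sC$ is closed. Conversely, if $\sC$ is closed then each $\mathsf{cl}_N$ is invertible by \autoref{thm:intextclosed}, so precomposition with $\mathsf{pr}_P$ is a bijection $\Hom_{\sC^{T\times S}}\big((\pf{f}P)\boxtimes M,\,N\big)\to\Hom_{\sC^{T\times S}}\big(\pf{(f\times\id_S)}(P\boxtimes M),\,N\big)$ for every $N$, whence $\mathsf{pr}_P$ is an isomorphism by Yoneda and the projection formula holds.
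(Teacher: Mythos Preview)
Your proposal is correct and is essentially the same argument as the paper's own proof: both observe that the canonical map of \autoref{thm:intextclosed}\ref{item:closed2} is the mate (under the adjunctions $\pf{f}\dashv\pb{f}$, $\pf{(f\times\id_S)}\dashv\pb{(f\times\id_S)}$, and $-\boxtimes M\dashv M\xrhd-$) of the projection-formula comparison map from \autoref{thm:altproj}, so that one is invertible if and only if the other is. You have simply spelled out the Yoneda step that the paper leaves implicit in the phrase ``in such a way that one is an isomorphism if and only if the other is.''
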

\begin{proof}
  The canonical map in \autoref{thm:altproj} is a mate of that in \autoref{thm:intextclosed}\ref{item:closed2}, in such a way that one is an isomorphism if and only if the other is.
\end{proof}

It is shown in~\cite{shulman:frbi} that in the situation of \autoref{thm:closed-proj}, the bicategory $\calDi \sC\bS$ is closed.
We now generalize this to derivators.

\begin{defn}
  An \bS-indexed derivator \dV has \textbf{\bS-indexed homotopy products} if each derivator morphism $\pb{f}$ has a right adjoint morphism $\copf{f}$ that satisfies the Beck-Chevalley condition for all homotopy pullback squares in \bS.
\end{defn}

\begin{defn}
  An \bS-indexed symmetric monoidal derivator \dV is \textbf{closed} if each monoidal derivator $\dV^R$ is closed, with internal-homs $\RHD$, and the reindexing functors $\pb{f}$ are closed monoidal.
\end{defn}

Recall that closedness of $\dV^R$ means that the derivator morphism $\otimes \colon \dV^R \times \dV^R \to \dV^R$ is part of a two-variable adjunction.
In this case, the second condition for closedness of \dV is equivalent to closedness of each indexed monoidal category $\dV^{-}(A)$.

\begin{lem}\label{thm:intext-der}
  If an indexed monoidal derivator \dV has indexed homotopy products, then it is closed if and only if
  \begin{enumerate}
  \item each derivator morphism $\boxtimes\colon \dV^R \times \dV^S \to \dV^{R\times S}$ is part of a two-variable adjunction, and\label{item:dercl1}
  \item the condition of \autoref{thm:intextclosed}\ref{item:closed2} holds levelwise for each $\dV^{-}(A)$.
  \end{enumerate}
\end{lem}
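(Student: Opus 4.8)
The plan is to deduce this from \autoref{thm:intextclosed}, applied one level at a time. Recall from \S\ref{sec:indexed} that an $\bS$-indexed monoidal derivator $\dV$ is the same datum as a $\cCat$-indexed diagram of ordinary $\bS$-indexed monoidal categories $A\mapsto\dV^{-}(A)$, and that (as in the remark following \autoref{thm:derindcoprod}, together with its analogue for products) $\dV$ has $\bS$-indexed homotopy products precisely when each $\dV^{-}(A)$ does: right adjoints of morphisms of derivators are computed pointwise, and the Beck--Chevalley maps for homotopy pullback squares are invertible as $2$-cells of derivator morphisms if and only if they are invertible levelwise. Thus, under the standing hypothesis, each $\dV^{-}(A)$ satisfies the hypothesis of \autoref{thm:intextclosed}.

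Next I would record the dictionary between ``derivator-level'' closedness and ``levelwise'' closedness. On the one hand, $\dV$ is closed if and only if each $\dV^{-}(A)$ is closed as an $\bS$-indexed symmetric monoidal category; this is essentially the remark preceding the present lemma. The content is that closedness of each monoidal derivator $\dV^R$---that is, $\otimes\colon\dV^R\times\dV^R\to\dV^R$ underlying a two-variable adjunction of derivators---and the closed-monoidality of each reindexing morphism $\pb f$ are both detected levelwise. For the first point one uses that $\otimes$ is automatically cocontinuous in each variable (a monoidal-derivator axiom), together with the standard fact that a morphism of derivators cocontinuous in each variable admits an adjoint morphism computed pointwise \cite[\S\S2--3]{gps:additivity}, \cite[Prop.~2.9]{groth:ptstab}; for the second, the canonical transformation expressing closed-monoidality of $\pb f$ restricts at each level $A$ to the corresponding transformation for $\pb f\colon\dV^S(A)\to\dV^R(A)$, and invertibility of a transformation of derivator morphisms is a levelwise condition. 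On the other hand, the same reasoning shows that condition~(1)---that $\boxtimes\colon\dV^R\times\dV^S\to\dV^{R\times S}$ is a two-variable adjunction of derivators---holds if and only if each $\boxtimes\colon\dV^R(A)\times\dV^S(A)\to\dV^{R\times S}(A)$ is a two-variable adjunction, i.e.\ if and only if each $\dV^{-}(A)$ satisfies \autoref{thm:intextclosed}\ref{item:closed1}; here one uses that $\boxtimes=\pb{\pi_S}(-)\otimes\pb{\pi_R}(-)$ is built from cocontinuous morphisms, hence is cocontinuous in each variable and has pointwise-computed adjoints (the external homs $\xrhd$). Finally, condition~(2) is by definition the levelwise form of \autoref{thm:intextclosed}\ref{item:closed2}.

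It then remains to assemble these observations: applying \autoref{thm:intextclosed} to $\dV^{-}(A)$---which has $\bS$-indexed homotopy products---shows that $\dV^{-}(A)$ is closed if and only if it satisfies \autoref{thm:intextclosed}\ref{item:closed1} and \ref{item:closed2}, and quantifying over all $A\in\cCat$ and feeding in the dictionary of the previous paragraph turns this into the assertion that $\dV$ is closed if and only if (1) and (2) hold. I expect the main obstacle to be that middle step: one must be careful that ``closed'' for the derivator-level data ($\dV^R$, the morphisms $\pb f$, and $\boxtimes$) is faithfully captured by the levelwise data, which is exactly where the general derivator machinery---pointwise-computed adjoints of cocontinuous (two-variable) morphisms, and levelwise detection of invertibility of modifications and transformations---must be invoked. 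Once that is granted, the rest is bookkeeping parallel to the proof of \autoref{thm:intextclosed}.
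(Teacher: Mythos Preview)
Your proposal is correct and follows essentially the same route as the paper: reduce both sides of the biconditional to levelwise statements and then apply \autoref{thm:intextclosed} to each $\dV^{-}(A)$. The paper's proof is terser---it says the formulas converting $\RHD\leftrightarrow\xrhd$ from \cite[Proposition~13.9]{shulman:frbi} can be applied verbatim to derivator morphisms (invoking \cite[Lemma~8.13]{gps:additivity} for the two-variable adjunction machinery), and then defers the rest to \autoref{thm:intextclosed} levelwise---whereas you spell out the levelwise dictionary more fully; but the underlying content is the same.
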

\begin{proof}
  The definitions of $\RHD$ and $\xrhd$ in terms of each other from~\cite[Proposition~13.9]{shulman:frbi} can be applied verbatim to derivator morphisms.
  (More precisely, we apply~\cite[Lemma~8.13]{gps:additivity}.)
  The rest then follows from \autoref{thm:intextclosed} applied to each $\dV^{-}(A)$.
\end{proof}

\begin{lem}\label{thm:clproj-der}
  If an indexed monoidal derivator \dV has indexed homotopy products and coproducts and satisfies \autoref{thm:intext-der}\ref{item:dercl1}, then it is closed if and only if its indexed homotopy coproducts are preserved by $\ten$.
\end{lem}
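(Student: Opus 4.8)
The plan is to bootstrap from the levelwise statement \autoref{thm:closed-proj} via the characterization of closedness in \autoref{thm:intext-der}. First I would note that, under hypothesis \ref{item:dercl1}, \autoref{thm:intext-der} says that \dV\ is closed if and only if, for every $A\in\cCat$, the ordinary indexed monoidal category $\dV^{-}(A)$ satisfies \autoref{thm:intextclosed}\ref{item:closed2}. Likewise, by \autoref{thm:derindcoprod}, the indexed homotopy coproducts of \dV\ are preserved by $\ten$ precisely when, for every $A$, those of $\dV^{-}(A)$ are preserved by $\ten$. Thus it suffices to prove, for each fixed $A$, that $\dV^{-}(A)$ satisfies \autoref{thm:intextclosed}\ref{item:closed2} if and only if its indexed coproducts are preserved by $\ten$.

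For this I would apply \autoref{thm:closed-proj} with $\sC = \dV^{-}(A)$. Its hypotheses are met: the assumption that \dV\ has $\bS$-indexed homotopy products and coproducts means, level by level, that each $\pb{f}$ on $\dV^{-}(A)$ has left and right adjoints satisfying Beck--Chevalley for homotopy pullback squares, since adjoints of morphisms of derivators and their Beck--Chevalley isomorphisms are computed levelwise; and hypothesis \ref{item:dercl1}, that $\boxtimes$ is part of a two-variable adjunction of derivators, restricts at each $A$ to a two-variable adjunction of ordinary categories (by~\cite[Lemma~8.13]{gps:additivity}), which is exactly \autoref{thm:intextclosed}\ref{item:closed1} for $\dV^{-}(A)$. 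So \autoref{thm:closed-proj} gives the desired equivalence for each $A$, and combining it with the two reductions of the previous paragraph finishes the proof.

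The argument is essentially formal; the only point that needs a bit of care is the bookkeeping that all the structure carried by a morphism of derivators --- its adjoints, the associated Beck--Chevalley isomorphisms, and the two-variable adjunctions --- can be detected and tested one level at a time. This is standard for derivators (see~\cite{groth:ptstab,gps:additivity}), so once \autoref{thm:closed-proj} and \autoref{thm:intext-der} are available there is no real obstacle.
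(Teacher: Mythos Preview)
Your proof is correct and is exactly the paper's approach: the paper's proof is the single line ``Apply \autoref{thm:closed-proj} to each $\dV^{-}(A)$,'' and you have simply unpacked what that entails.
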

\begin{proof}
  Apply \autoref{thm:closed-proj} to each $\dV^{-}(A)$.
\end{proof}

\begin{thm}\label{thm:der-bicat-indexed-closed}
  If \dV is a closed $\bS$-indexed monoidal derivator with indexed homotopy products and indexed homotopy coproducts, then the derivator bicategory $\calDi \dV\bS$ from \autoref{thm:der-bicat-indexed} is closed.
\end{thm}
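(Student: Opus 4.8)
The plan is to deduce this from the ordinary (non-derivator) construction of~\cite{shulman:frbi} applied levelwise, exactly as \autoref{thm:der-bicat-indexed} was proved, and then observe that the resulting closed structure is automatically compatible with restriction. Recall that asserting $\calDi\dV\bS$ is closed means precisely that for each triple $R,S,T$ the composition morphism
\[\odot\colon \calDi\dV\bS(R,S)\times\calDi\dV\bS(S,T) = \dV^{R\times S}\times\dV^{S\times T}\too \dV^{R\times T} = \calDi\dV\bS(R,T)\]
is part of a two-variable adjunction of derivators in the sense of~\cite[Definition~8.1]{gps:additivity}. Since $\dV$ is closed and has indexed homotopy products, \autoref{thm:intext-der} applies: the external product $\boxtimes$ is part of a two-variable adjunction of derivators, and the levelwise closedness condition \autoref{thm:intextclosed}\ref{item:closed2} holds in each ordinary indexed monoidal category $\dV^{-}(A)$. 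Since $\dV$ moreover has indexed homotopy coproducts, \autoref{thm:clproj-der} shows that these are preserved by $\ten$, i.e.\ the projection formula holds levelwise. Thus for every $A\in\cCat$, the indexed monoidal category $\dV^{-}(A)$ has $\bS$-indexed homotopy products and coproducts, the latter preserved by $\ten$, and is closed (internally, and hence externally by \autoref{thm:intextclosed}).

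First I would fix $A$ and invoke the result of~\cite{shulman:frbi} recalled above: the ordinary bicategory $\calDi\dV\bS(A)$ (the value at $A$ of the derivator bicategory) is closed, its right adjoints to $\odot$ being assembled out of the reindexing functors $\pb{f}$, their right adjoints $\copf{f}$, and the fiberwise internal-homs of the $\dV^R$. Next I would check naturality in $A$: the functors $\pb{f}$ commute with the restriction functors $u^*$ because $\pb{f}$ is a morphism of derivators; the $\copf{f}$ do so because ``indexed homotopy products'' asserts that $\copf f$ is a right-adjoint \emph{morphism of derivators}, not merely a levelwise right adjoint; and the fiberwise internal-homs do so because each $\dV^R$ is a closed monoidal \emph{derivator}. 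Consequently the levelwise right adjoints to $\odot$ assemble into morphisms of derivators and the levelwise adjunction isomorphisms are natural in $A$, which is exactly what it means for $\odot$ to be a two-variable adjunction of derivators.

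Equivalently, and more transparently, one may read the defining formula $M\odot N = \pf{(\id_R\times\pi_S\times\id_T)}\,\pb{(\id_R\times\Delta_S\times\id_T)}(M\boxtimes N)$ as exhibiting $\odot$ as the composite of the two-variable adjunction of derivators $\boxtimes$ with the adjunctions of derivators $\pb{(\id_R\times\Delta_S\times\id_T)}\dashv\copf{(\id_R\times\Delta_S\times\id_T)}$ and $\pf{(\id_R\times\pi_S\times\id_T)}\dashv\pb{(\id_R\times\pi_S\times\id_T)}$; since composing a two-variable adjunction of derivators with adjunctions of derivators again yields a two-variable adjunction of derivators, \cite[Lemma~8.13]{gps:additivity} (used as in the proof of \autoref{thm:intext-der}) completes the argument. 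I expect the only real obstacle to be the bookkeeping of this naturality in $A$ --- ensuring that the right adjoints occurring are honest morphisms of derivators rather than merely pointwise right adjoints --- but this is precisely guaranteed by the hypotheses that $\dV$ has indexed homotopy products and is closed, so no additional work is required.
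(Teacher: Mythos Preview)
Your proposal is correct and matches the paper's approach: the paper's proof simply observes that the construction of the right adjoints to $\odot$ from~\cite[Proposition~17.5]{shulman:frbi} can be applied verbatim at the level of derivator morphisms, invoking~\cite[Lemma~8.13]{gps:additivity}, which is exactly your ``equivalently, and more transparently'' paragraph. Your first paragraph unpacks this levelwise with an explicit naturality-in-$A$ check, which is a correct elaboration but not additional content beyond what the paper intends.
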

\begin{proof}
  Recall that a derivator bicategory \dW is closed if each derivator morphism $\odot\colon \dW(R,S)\times \dW(S,T) \to \dW(R,T)$ is part of a two-variable adjunction.
  However, the construction of its adjoints in~\cite[Proposition~17.5]{shulman:frbi} can be applied verbatim in to derivator morphisms (again, technically invoking~\cite[Lemma~8.13]{gps:additivity}).
\end{proof}

\section{Indexed monoidal model categories}
\label{sec:indexed-mmc}

Finally, we need to be able to construct indexed monoidal derivators, which we do by introducing a notion of \emph{indexed monoidal model category}.
However, there is a slightly subtle twist.
We could define an $\bS$-indexed monoidal model category to be a pseudofunctor from $\bS\op$ to the 2-category of monoidal model categories, but this would not include the desired example of parametrized spectra.
Instead we ask that the \emph{external} form of the monoidal product is a two-variable Quillen left adjoint.

There is an additional problem in that an isomorphism relating composites of left and right adjoints does not automatically descend to homotopy categories, even if all the functors involved have derived versions.
For instance, a Beck-Chevalley isomorphism $\pf{f} \pb{h} \cong \pb{g}\pf{k}$ does not necessarily induce an isomorphism $\bL\pf{f} \circ \bR\pb{h} \cong \bR\pb{g} \circ \bL\pf{k}$ of derived functors.
However, a transformation of this sort does induce a \emph{not-necessarily-invertible} transformation between composites of derived functors called its \emph{derived transformation}.
The main result of~\cite{shulman:dblderived} is that the derived transformation of a mate (such as a Beck-Chevalley transformation) is the corresponding mate at the level of derived functors.
Thus, to show that a derived Beck-Chevalley isomorphism (for instance) holds, it suffices to analyze the formula for the derived transformation given in~\cite{shulman:dblderived} and show that it is an isomorphism.
This analysis depends on the particular example in question, so in the general definition to follow we simply assume that certain derived transformations are isomorphisms.
Fortunately, in our desired example of parametrized spectra, the necessary analysis has already been done by~\cite{maysig:pht}.

\begin{defn}\label{def:immc}
  An $\bS$-\textbf{indexed (symmetric) monoidal model category} consists of:
  \begin{enumerate}
  \item A pseudofunctor $\sC\colon \bS\op\to \cRMODEL_{\mathrm{cc}}$, where $\cRMODEL_{\mathrm{cc}}$ is the 2-category of model categories and right Quillen functors that preserve homotopy colimits.
    (By the latter we mean that their induced morphism of derivators is cocontinuous.)\label{item:immc1}
  \item The underlying pseudofunctor $\bS\op\to\cCAT$ is an indexed (symmetric) monoidal category with indexed homotopy coproducts preserved by $\ten$.\label{item:immc2}
  \item The external product functors
    \[ \boxtimes \colon \sC^R \times\sC^S \to \sC^{R\times S} \]
    are left Quillen adjoints of two variables.\label{item:immc3}
  \item For any cofibrant $M\in\sC^R$ and any cofibrant replacement $Q \lS_\star \to \lS_\star$ of the unit $\lS_\star$ of $\sC_\star$, the induced maps
    \begin{align*}
      Q \lS_\star \boxtimes M &\to \lS_\star\boxtimes M \cong M \qquad\text{and}\\
      M\boxtimes Q \lS_\star &\to M \boxtimes\lS_\star \cong M
    \end{align*}
    are isomorphisms.\label{item:immc4}
  \item The derived transformations of the isomorphisms
    \[ \pb{f} M \boxtimes \pb{g} N \cong \pb{(f\times g)}(M\boxtimes N) \]
    are again isomorphisms.\label{item:immc5}
  \item The derived transformations of the Beck-Chevalley isomorphisms for any homotopy pullback square in $\bS$ are again isomorphisms.\label{item:immc6}
  \item The derived functor $(\mathbf{R}\pb{f})_A\colon \ho(\sC^S)(A) \to \ho(\sC^R)(A)$ has a right adjoint $(\copf{f})_A$ for all $A\in\cCat$.\label{item:immc7}
  \end{enumerate}
\end{defn}

Condition~\ref{item:immc4} may look familiar from the theory of monoidal model categories, but it is actually also a simple example of a derived natural transformation as in~\cite{shulman:dblderived}.
Note also that we do not assume \sC itself to have indexed products; it may, but all we need for the following theorem is condition~\ref{item:immc7}.
The primary example of parametrized spectra does have indexed products on the point-set level, but it is unclear in general whether or how these are related directly to the derived ones, which are constructed using Brown representability.

\begin{thm}\label{thm:homotopy-indexed-monoidal}
  If $\sC$ is an $\bS$-indexed (symmetric) monoidal model category, then we have an $\bS$-indexed closed (symmetric) monoidal derivator $\ho(\sC)$, defined by $\ho(\sC)^R = \ho(\sC^R)$, which has indexed homotopy products and indexed homotopy coproducts which are preserved by $\ten$.
\end{thm}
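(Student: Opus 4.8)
The plan is to assemble the required structure one piece at a time out of the seven conditions in \autoref{def:immc}, relying throughout on the calculus of derived functors and derived natural transformations of~\cite{shulman:dblderived}. First I would produce the underlying $\bS$-indexed derivator. Each model category $\sC^R$ has an associated homotopy derivator $\ho(\sC^R)$ (its existence is guaranteed once $\sC^R$ is, say, cofibrantly generated, as in the intended example), and since every reindexing functor $\pb{f}$ is right Quillen and preserves homotopy colimits by condition~\ref{item:immc1}, its right derived functor is a cocontinuous morphism of derivators $\bR\pb{f}\colon\ho(\sC^S)\to\ho(\sC^R)$. A natural isomorphism between right Quillen functors derives to a natural isomorphism, and a composite of right Quillen functors already computes the derived functor of the composite (each preserves fibrant objects), so the pseudofunctor structure of $\sC$ descends; the higher coherence is exactly what~\cite{shulman:dblderived} provides. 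This yields a pseudofunctor $\ho(\sC)\colon\bS\op\to\cDER_{\mathrm{cc}}$ with $\ho(\sC)^R=\ho(\sC^R)$.

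Next I would install the monoidal structure. By condition~\ref{item:immc3} the external product is a left Quillen adjoint of two variables, so it descends to a two-variable adjunction of derivators $\boxtimes\colon\ho(\sC^R)\times\ho(\sC^S)\to\ho(\sC^{R\times S})$ (as in~\cite{gps:additivity}), and one sets $M\otimes_R N=\pb{\Delta_R}(M\boxtimes N)$ on $\ho(\sC^R)$. The associativity and, in the symmetric case, symmetry constraints are inherited from the point-set indexed monoidal structure; by~\cite{shulman:dblderived} it suffices that the relevant derived transformations be invertible, and these are exactly the ones asserted invertible by condition~\ref{item:immc5} (for the reindexings occurring in the constraints) and condition~\ref{item:immc6} (for the diagonal homotopy pullback squares), while the unit constraints are condition~\ref{item:immc4}. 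The same condition~\ref{item:immc5} shows that each $\bR\pb{f}$ is strong monoidal. Hence $\ho(\sC)$ is an $\bS$-indexed monoidal derivator, symmetric when $\sC$ is.

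It then remains to verify the three finiteness hypotheses. For $\bS$-indexed homotopy coproducts, the Quillen adjunction $\pf{f}\dashv\pb{f}$ derives to $\bL\pf{f}\dashv\bR\pb{f}$, supplying the left adjoints of \autoref{thm:derindcoprod}\ref{item:dic1}; the Beck--Chevalley condition holds at the point-set level by condition~\ref{item:immc2}, and its derived transformation for any homotopy pullback square in $\bS$ is invertible by condition~\ref{item:immc6}. Since $\pb{f}$ acts objectwise on diagram categories and $\ho(\sC^R)(A)=\ho((\sC^R)^A)$ for the projective model structure, the Beck--Chevalley condition at each level $A$ reduces to the one on underlying objects, so \autoref{thm:derindcoprod}\ref{item:dic2} holds. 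For $\bS$-indexed homotopy products, condition~\ref{item:immc7} supplies right adjoints $(\copf{f})_A$ to each $(\bR\pb{f})_A$, and since $\bR\pb{f}$ is cocontinuous these assemble into a right adjoint morphism of derivators (a cocontinuous morphism of derivators all of whose components admit right adjoints admits a right adjoint morphism~\cite{groth:ptstab}); Beck--Chevalley for these right adjoints follows from that for the left adjoints by the usual mate calculation, the derivator analogue of the one recorded just before \autoref{thm:intextclosed}. Finally, for the projection formula---which must hold levelwise, i.e.\ for the indexed monoidal category $R\mapsto\ho(\sC^R)(A)$ for each $A$---I would note that $R\mapsto(\sC^R)^A$ with the projective model structures is again an $\bS$-indexed monoidal model category, so it is enough to treat the underlying indexed monoidal category of $\ho(\sC)$; there the projection-formula map is a mate of the comparison map for the closed monoidal structure of $\pb{f}$ (as in the proof of \autoref{thm:closed-proj}), so by~\cite{shulman:dblderived} its derived transformation is the corresponding mate at the derived level, and one checks it is invertible using conditions~\ref{item:immc5} and~\ref{item:immc6} together with the point-set projection formula of condition~\ref{item:immc2}. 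Having established indexed homotopy products, indexed homotopy coproducts preserved by $\ten$, and that $\boxtimes$ is a two-variable adjunction of derivators, \autoref{thm:clproj-der} then also gives that $\ho(\sC)$ is closed, which completes the argument.

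The main difficulty is not any single construction but the systematic \emph{descent of isomorphisms to the derived level}: pseudofunctoriality, the monoidal coherence data, Beck--Chevalley, and the projection formula all hold strictly on the point-set category but need not survive passage to homotopy categories, because composites of left and right derived functors do not compose strictly. Identifying which of these descent statements are automatic and which must be assumed is exactly the purpose of conditions~\ref{item:immc4}--\ref{item:immc7}, and making each descent precise is what the derived-transformation calculus of~\cite{shulman:dblderived} is for; the residual work is the routine reduction of derivator-level statements to underlying-level ones via the levelwise projective model structures.
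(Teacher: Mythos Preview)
Your argument is correct and follows the same overall strategy as the paper. The one place worth flagging is the projection formula: you route it through the mate with the closed-structure comparison and then invoke conditions~\ref{item:immc5} and~\ref{item:immc6}, whereas the paper simply rewrites the projection formula in its external form via \autoref{thm:altproj}, namely $\pf{(\id_C\times f)}(M\boxtimes N)\to M\boxtimes\pf{f}N$, and observes that every functor appearing here is left Quillen, so the point-set isomorphism descends to the derived level automatically. More generally, the paper works throughout with the \emph{external} product (using the equivalence in~\cite{shulman:frbi} between external and internal presentations of an indexed monoidal structure), which keeps the associativity and projection-formula constraints entirely in the ``all left Quillen'' regime where descent is free; this sidesteps your appeal to~\ref{item:immc6} for the monoidal coherence, which is in fact not needed there. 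Your route works, but the paper's external viewpoint is the cleaner bookkeeping.
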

\begin{proof}
  Essentially by assumption on the morphisms in $\cRMODEL_{\mathrm{cc}}$, passage to homotopy categories and right derived functors induces a pseudofunctor $\ho(\sC)\colon \bS\op \to \cDER_{\mathrm{cc}}$.
  Now it is shown in~\cite{shulman:frbi} that a (symmetric) monoidal structure on an indexed category can equivalently be described by its external product functors (there represented in terms of a monoidal structure on the corresponding categorical fibration), and this can be extended levelwise to derivators.
  In our case, \autoref{def:immc}\ref{item:immc3} ensures that the external product induces two-variable morphisms of derivators, while~\ref{item:immc5} ensures that these are appropriately pseudonatural.
  The associativity isomorphism (and the symmetry isomorphism, if present) descends automatically, since all functors involved are left Quillen, while assumption~\ref{item:immc4} ensures that the unit isomorphism descends.
  Coherence for all of these isomorphisms follows from the functoriality of derived transformations.
  Thus, $\ho(\sC)$ is an indexed (symmetric) monoidal derivator.

  For indexed homotopy coproducts, the Quillen adjunctions $\pf{f}\dashv \pb{f}$ descend to adjunctions of derivators, and assumption~\ref{item:immc6} ensures that the Beck-Chevalley condition for homotopy pullback squares descends.
  By \autoref{thm:altproj}, for preservation by $\ten$ it suffices to check that the isomorphism $\pf{(\id_C\times f)}(M\boxtimes N) \to M\boxtimes \pf{f}N$ descends to a derived isomorphism; but this is automatic because all functors involved are left Quillen.

  Now since each $\bR\pb{f}$ is cocontinuous, by~\cite[Proposition~2.9]{groth:ptstab} the adjoints $(\copf{f})_A$ assumed by~\ref{item:immc7} assemble into a morphism of derivators right adjoint to $\bR\pb{f}$.
  Thus, by the remark after \autoref{def:indexed-products}, $\ho(\sC)$ has indexed homotopy products as well.
  Finally, closedness follows from \autoref{thm:clproj-der}, since by assumption the external products $\boxtimes$ are two-variable Quillen left adjoints, hence induce two-variable left adjoints of derivators.
\end{proof}

The following is the example we care most about, which we used in \S\ref{sec:reidemeister} to describe the Reidemeister trace.

\begin{thm}\label{thm:parametrized-spectra}
  Let \bS be the category of compactly generated topological spaces.
  Then there is an \bS-indexed closed symmetric monoidal derivator $\ho(\cSp)$ with indexed homotopy products and indexed homotopy coproducts preserved by $\ten$, where $\ho(\cSp)^R$ is the homotopy category of $R$-parametrized spectra.
\end{thm}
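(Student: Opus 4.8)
The plan is to apply \autoref{thm:homotopy-indexed-monoidal} to the $\bS$-indexed symmetric monoidal model category of parametrized spectra constructed in~\cite{maysig:pht}. For each compactly generated space $R$, let $\cSp^R$ denote the category of spectra parametrized over $R$, equipped with one of the well-grounded stable model structures of~\cite{maysig:pht} for which the external smash product is a two-variable Quillen left adjoint; for $f\colon R\to S$, pullback $\pb{f}$ is right Quillen with left adjoint $\pf{f}$. With this choice, proving the theorem amounts to verifying the seven conditions of \autoref{def:immc}, and essentially all of the work has already been done in~\cite{maysig:pht,PS3,shulman:dblderived}.

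Running through the conditions: condition~\ref{item:immc1} holds because $\pb{f}$ is right Quillen and, being part of the Quillen adjunction $\pf{f}\dashv\pb{f}$, induces a cocontinuous morphism of derivators by~\cite[Prop.~2.9]{groth:ptstab}; condition~\ref{item:immc2} is the point-set indexed symmetric monoidal category of parametrized spectra with its indexed homotopy coproducts and projection formula, which is~\cite{PS3}; condition~\ref{item:immc3} is the statement that the external smash product is a two-variable Quillen left adjoint, one of the main technical results of~\cite{maysig:pht}; condition~\ref{item:immc4} is the unit part of that same two-variable adjunction, expressed through a cofibrant replacement of the sphere spectrum, also handled in~\cite{maysig:pht}; and conditions~\ref{item:immc5} and~\ref{item:immc6} are precisely the assertions that the point-set base-change isomorphisms (for products of spaces, and for homotopy pullback squares) have invertible derived transformations --- exactly the homotopical analysis carried out in~\cite{maysig:pht}, whose identification with the abstract derived-transformation formalism is the content of~\cite{shulman:dblderived}. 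Granting these, \autoref{thm:homotopy-indexed-monoidal} yields the closed symmetric monoidal derivator $\ho(\cSp)$ with indexed homotopy products and coproducts preserved by $\otimes$, and by construction $\ho(\cSp)^R=\ho(\cSp^R)$ is the homotopy category of $R$-parametrized spectra.

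The only condition that is not a straight citation is~\ref{item:immc7}: the existence, for every small category $A$, of a right adjoint $(\copf{f})_A$ to the derived reindexing functor $(\bR\pb{f})_A\colon \ho(\cSp^S)(A)\to\ho(\cSp^R)(A)$. On the point-set level~\cite{maysig:pht} do construct a right adjoint to $\pb{f}$, but as remarked before \autoref{def:immc} it is unclear whether it descends to homotopy categories, so instead the argument goes through Brown representability: the homotopy derivator of parametrized spectra is stable, so each $\ho(\cSp^S)(A)$ is a triangulated category, and --- from the model-categorical presentation --- a well-generated one; since $(\bR\pb{f})_A$ preserves coproducts (being cocontinuous by condition~\ref{item:immc1}), Neeman's Brown representability theorem supplies the required right adjoint, and naturality in $A$ follows from the uniqueness of adjoints. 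I expect this to be the main obstacle: one must check that the homotopy categories of $A$-shaped diagrams of parametrized spectra are genuinely well-generated triangulated categories, uniformly in $A$, so that Brown representability can be invoked across all diagram shapes at once.
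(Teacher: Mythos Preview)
Your overall strategy---apply \autoref{thm:homotopy-indexed-monoidal} and verify the conditions of \autoref{def:immc} by citing~\cite{maysig:pht}---is exactly what the paper does, and your treatment of conditions~\ref{item:immc2}--\ref{item:immc6} and~\ref{item:immc7} is essentially correct (the paper is more explicit about~\ref{item:immc7}, exhibiting compact generators for $\ho(\cSp^R)(A)$ as $\bigcup_a \lan{a}(\sD_R)$ where $\sD_R$ is the generating set from~\cite{maysig:pht} and $\lan{a}$ is homotopy left Kan extension along $a\colon\bbone\to A$; this is a short argument you could supply).

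There is, however, a genuine gap in your verification of condition~\ref{item:immc1}. You claim that $\pb{f}$, ``being part of the Quillen adjunction $\pf{f}\dashv\pb{f}$, induces a cocontinuous morphism of derivators by~\cite[Prop.~2.9]{groth:ptstab}.'' But that proposition says that \emph{left} adjoints of derivator morphisms are cocontinuous; it gives you cocontinuity of $\bL\pf{f}$, not of $\bR\pb{f}$. A right Quillen functor has no reason in general to preserve homotopy colimits, and this is exactly the nontrivial content of the ``cc'' in $\cRMODEL_{\mathrm{cc}}$. You later rely on this when you say ``$(\bR\pb{f})_A$ preserves coproducts (being cocontinuous by condition~\ref{item:immc1})'', so the gap propagates.

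The paper fills this gap as follows: first, $\bR\pb{f}$ preserves coproducts by the argument in~\cite[Theorem~13.1.18]{maysig:pht}; second, as a right adjoint it preserves pullbacks, and since both domain and codomain are \emph{stable} derivators, pullback squares coincide with pushout squares, so it preserves pushouts too; finally, a morphism of derivators preserving coproducts and pushouts is cocontinuous by~\cite[Theorem~7.13]{PS5}. This stability argument is the missing idea in your proposal.
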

\begin{proof}
  We apply \autoref{thm:homotopy-indexed-monoidal}.
  For $R\in\bS$, let $\cSp^R$ be the category of parametrized orthogonal spectra over $R$.
  By~\cite[Theorem 11.4.1 and Proposition 11.4.8]{maysig:pht}, these form an \bS-indexed closed symmetric monoidal category with indexed products and indexed coproducts preserved by $\ten$.

  Now by~\cite[Theorem~12.3.10]{maysig:pht}, each $\cSp^R$ has a stable model structure, for which each adjunction $\pf{f} \colon \cSp^R \toot \cSp^S \colon \pb{f}$ is Quillen by~\cite[Proposition~12.6.7]{maysig:pht}.
  Condition~\ref{item:immc3} of \autoref{def:immc} holds by~\cite[Prop~12.6.5]{maysig:pht}, while~\ref{item:immc4} holds because $\lS_\star$ is cofibrant (note that $\cSp^\star$ coincides with the monoidal model category of ordinary orthogonal spectra from~\cite{mmss:mcds}).
  Conditions~\ref{item:immc5} and~\ref{item:immc6} are shown in~\cite[Theorem~13.7.2]{maysig:pht}. and~\cite[Theorem~13.7.7]{maysig:pht} respectively.
  
  It remains to show that each $\mathbf{R}\pb{f}$ is cocontinuous and has a levelwise right adjoint.
  The proof of~\cite[Theorem~13.1.18]{maysig:pht} shows that $\mathbf{R}\pb{f}$ preserves coproducts.
  Since it is a right adjoint, it preserves pullbacks; and since its domain and codomain are stable, this implies that it preserves pushouts as well.
  Thus, by \cite[Theorem 7.13]{PS5} 
  it is cocontinuous.

  Finally, for the existence of a right adjoint to $(\mathbf{R}\pb{f})_A$, we invoke Brown representability following~\cite[Theorem~13.1.18]{maysig:pht} (which is the case $A=\bbone$).
  Since (as we have just shown) $\mathbf{R}\pb{f}$ is a cocontinuous morphism of derivators, $(\mathbf{R}\pb{f})_A$ is an exact and coproduct-preserving functor of triangulated categories.
  Thus, by~\cite[Theorem~13.1.17]{maysig:pht} it suffices to show that each triangulated category $\ho(\cSp^R)(A)$ is compactly generated.
  This is shown in the case $A=\bbone$ by~\cite[Lemma~13.1.11]{maysig:pht}; the generating set was denoted $\sD_R$.

  For general $A\in\cCat$, recall that the ``evaluate at $a$'' functors $\ev{a}\colon \ho(\cSp^R)(A) \to \ho(\cSp^R)(\bbone)$ are jointly conservative.
  Thus, if $\lan{a}$ denotes the left adjoint of $\ev{a}$ (i.e.\ homotopy left Kan extension), then $\bigcup_{a} \lan{a}(\sD_R)$ is a generating set for $\ho(\cSp^R)(A)$.
  And since $\ev{a}$ preserves coproducts (being a left adjoint), $\lan{a}$ preserves compact objects~\cite[Remark~13.1.9]{maysig:pht}.
  Thus, $\ho(\cSp^R)(A)$ is compactly generated.
\end{proof}

Therefore, by \autoref{thm:der-bicat-indexed} and \autoref{thm:der-bicat-indexed-closed} we have:

\begin{thm}\label{thm:exbicat}
  There is a closed derivator bicategory \Ex, with a shadow valued in the homotopy derivator of spectra.
  Its objects are compactly generated spaces, its hom $\Ex(R,S)$ is the homotopy derivator of the model category of spectra parametrized over $R\times S$, and its underlying ordinary bicategory is the one constructed in~\cite[Chapter~17]{maysig:pht}.
\end{thm}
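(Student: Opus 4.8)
The plan is to assemble \Ex from the three constructions established immediately above. First I would apply \autoref{thm:parametrized-spectra} with $\bS$ the category of compactly generated topological spaces; this produces an $\bS$-indexed closed symmetric monoidal derivator $\ho(\cSp)$ with indexed homotopy products and indexed homotopy coproducts preserved by $\ten$, whose value $\ho(\cSp)^R = \ho(\cSp^R)$ at an object $R$ is the homotopy derivator of the model category $\cSp^R$ of $R$-parametrized orthogonal spectra. Feeding $\ho(\cSp)$ into \autoref{thm:der-bicat-indexed} yields a derivator bicategory --- this is the bicategory \Ex of the statement --- whose $0$-cells are the objects of $\bS$ and whose hom-derivator from $R$ to $S$ is $\ho(\cSp)^{R\times S} = \ho(\cSp^{R\times S})$, the homotopy derivator of spectra parametrized over $R\times S$; the same theorem supplies a shadow valued in $\ho(\cSp)^\star$, which is the homotopy derivator of ordinary orthogonal spectra since $\star$ is the one-point space. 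Closedness of \Ex is then immediate from \autoref{thm:der-bicat-indexed-closed}, whose hypotheses --- a closed indexed monoidal derivator with indexed homotopy products and coproducts --- are exactly what \autoref{thm:parametrized-spectra} delivers.

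The only remaining point is to identify the underlying ordinary bicategory $\Ex(\bbone)$ with the bicategory of \cite[Chapter~17]{maysig:pht}. For this I would observe that evaluating the construction of \autoref{thm:der-bicat-indexed} at the terminal category $\bbone$ recovers verbatim the construction of \cite{shulman:frbi} applied to the underlying ordinary $\bS$-indexed monoidal category $R\mapsto\ho(\cSp^R)$ of $\ho(\cSp)$: the formulas for composition, units, and shadow are literally the same, only with derived functors in place of point-set ones. Since that underlying indexed monoidal category --- equipped with the derived external smash product and the derived base-change adjunctions $\pf{f}\dashv\pb{f}$ --- is precisely the homotopy-category-level structure on parametrized spectra studied in \cite{maysig:pht}, and since \cite{shulman:frbi} already exhibits the May--Sigurdsson bicategory as the instance of the general construction associated to exactly this data, the identification follows.

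I expect the real content of the argument to lie not in this theorem but in \autoref{thm:parametrized-spectra}: the delicate issue is that the point-set Beck--Chevalley, pseudofunctoriality, and unit isomorphisms must descend to honest isomorphisms of \emph{derived} functors rather than merely to derived transformations, which is precisely why \autoref{def:immc} is phrased in terms of derived transformations and why its verification for parametrized spectra leans on the analyses in \cite{maysig:pht}. Granting that input, the present theorem is pure bookkeeping: no new homotopy theory is needed beyond tracking which earlier construction produces which component of the structure and matching the resulting $\bbone$-level bicategory against \cite{maysig:pht}.
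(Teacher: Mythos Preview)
Your proposal is correct and matches the paper's approach exactly: the paper's entire proof is the single sentence ``Therefore, by \autoref{thm:der-bicat-indexed} and \autoref{thm:der-bicat-indexed-closed} we have,'' appearing immediately after \autoref{thm:parametrized-spectra}. Your write-up simply unpacks that sentence, and your added paragraph on identifying the underlying ordinary bicategory with that of \cite[Chapter~17]{maysig:pht} supplies a detail the paper leaves implicit.
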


\bibliographystyle{alpha}
\bibliography{additivity}

\end{document}